\newtheorem{thm}{Theorem}[section]
\newtheorem{cor}[thm]{Corollary}
\newtheorem{lem}[thm]{Lemma}
\newtheorem{mydef}[thm]{Definition}
\newtheorem{prop}[thm]{Proposition}
\newtheorem{example}[thm]{Example}
\begin{document}
\title{A multifractal zeta function for cookie cutter sets}
\author{Simon Baker}
\maketitle
\begin{abstract}
Starting with the work of Lapidus and van Frankenhuysen a number of papers have introduced zeta functions as a way of capturing multifractal information. In this paper we propose a new multifractal zeta function and show that under certain conditions the abscissa of convergence yields the Hausdorff multifractal spectrum for a class of measures. 
\end{abstract}

\section{Introduction}
\subsection{Cookie cutter sets and multifractal analysis}
We begin by giving an overview of cookie cutter sets and multifractal analysis. The following is taken from \cite{Fa2}. Let $T:[0,1]\to\mathbb{R}$ be such that $T^{-1}([0,1])$ is the disjoint union of finitely many closed intervals $I_{1}^{1},\dots,I_{1}^{k}$, where each $I_{1}^{i}$ is mapped bijectively onto $[0,1]$. We denote the components of $T^{-n}([0,1])$ by $I_{n}^{i}$ and refer to these sets as the $n$-th level basic sets or as the basic sets of order $n$. We assume that $T$ is $C^{1+\epsilon}$ on the components of $T^{-1}([0,1])$ and $|T'(x)|>1$ for all $x\in T^{-1}([0,1])$. The set of interest is the repeller $$\Lambda=\left\{x\in [0,1] : T^{n}(x)\in \bigcup_{i=1}^{k}I_{1}^{i} \textrm{ for all } n\in \mathbb{Z}^{+}\right\}$$ or equivalently $$\Lambda=\bigcap_{n=0}^{\infty}T^{-n}([0,1]).$$ We call $\Lambda$ the cookie cutter set generated by the map $T$. In this case, the Hausdorff dimension of $\Lambda$ is the unique $\delta\in\mathbb{R}$ such that
\begin{equation}
\label{Bowen's equation}
P(-\delta\log|T'|)=0,
\end{equation}
where $P(\cdot)$ is the pressure functional. In what follows $\delta$ will denote the Hausdorff dimension of $\Lambda$ and $\phi$ will denote the function $-\log|T'|$.

Let $\mu$ be a measure supported on $\Lambda$, given $x\in\Lambda$ the local dimension of $\mu$ at $x\in \Lambda$ is given by $$ \textrm{dim}_{\mathrm{loc}}\mu(x)=\lim_{r\to 0} \frac{\log \mu(B_{r}(x))}{\log r},$$ when the limit exists. Here $B_{r}(x)$ denotes the open ball of radius $r$ centred at $x$. For $\alpha\geq 0,$ we define $$ E_{\alpha}=\{x\in\Lambda: \textrm{dim}_{\mathrm{loc}}\mu(x) =\alpha\},$$ and the Hausdorff multifractal spectrum of $\mu$ to be the function $f_{\mathrm{H}}(\alpha)=\textrm{dim}_{\mathrm{H}}(E_{\alpha}).$ In what follows $\delta_{\alpha}$ will denote $\textrm{dim}_{\mathrm{H}}(E_{\alpha})$.

\begin{example}
\label{Multifractal cookie cutter set example}
This example is taken from \cite{Fa1}. Let $\Lambda$ be the middle third cantor set and $\mu$ be the pushforward of the $(p,1-p)$ Bernoulli measure on $\{0,1\}^{\mathbb{Z}^{+}}$.  If $p\neq 1/2$, then $\mathrm{dim}_{\mathrm{loc}}\mu(x)$ is non-constant.
\end{example}

Given an $n$-th level basic set $I^{i}_{n},$ the regularity of $I_{n}^{i}$ is defined to be $$ R(I_{n}^{i})=\frac{\log \mu (I^{i}_{n})}{\log |I^{i}_{n}|}.$$
The regularity tells us how a measure is concentrated on a basic set. The advantage of taking the regularity is it is easy to compute, the disadvantage is that it gives us less detailed information than the local dimension.

Let $P_{\epsilon}$ be any disjoint partition of $[0,1]$ into intervals of the form $[a,a+\epsilon),$ we define
\begin{equation}
\label{VeMe recover}
S_{\epsilon}(q)=\sum_{I\in P_{\epsilon}^{*}}\mu(I)^{q} \textrm{ and } \tau(q)=\liminf_{\epsilon \to 0}\frac{\log S_{\epsilon}(q)}{\log \epsilon},
\end{equation}
where $P_{\epsilon}^{*}$ consists of those elements in $P_{\epsilon}$ satisfying $\mu(I)>0$. The Legendre transform of $\tau(q)$ is defined to be $$ f(\alpha)=\inf_{-\infty<q<\infty}\{\tau(q)+\alpha q\}.$$ The significance of $S_{\epsilon}(q)$ and $\tau(q)$ is that under certain conditions the Legendre transform of $\tau(q)$ is the Hausdorff multifractal spectrum of $\mu$.

\subsection{Geometric and Multifractal zeta functions}
In this section we introduce our multifractal zeta function. Before we do this, we provide some motivation by reviewing the zeta functions proposed in \cite{LF},\cite{ELMR} and \cite{LM}.

Let $\Omega=[0,1]\setminus\Lambda,$ then $\Omega=\cup_{j=1}^{\infty}I_{j}$ where $\{I_{j}\}_{j=1}^{\infty}$ is a countable collection of disjoint intervals. In \cite{LF}, the geometric zeta function of $\Omega$ is defined to be 
\begin{equation}
\label{LaFr geometric zeta function}
\zeta_{\Omega}(s)=\sum_{j=1}^{\infty}|I_{j}|^{s},
\end{equation} whenever this series converges. The following theorem holds.

\begin{thm}
\label{LaFr Theorem} The Minkowski dimension of $\Lambda$ equals the abscissa of convergence of $\zeta_{\Omega}(s)$. Under certain conditions (see \cite{LF} Theorem 6.12), the Minkowski measurability of $\Lambda$ is equivalent to $\zeta_{\Omega}(s)$ having only one pole with real part equal to the Minkowski dimension of $\Lambda,$ and furthermore this pole is simple.
\end{thm}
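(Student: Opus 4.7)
The plan is to treat the two assertions separately, since the first is combinatorial/real-analytic while the second is a genuinely complex-analytic (Tauberian) statement.

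For the equality of the abscissa of convergence $\sigma_{0}$ with the Minkowski dimension $d$, the strategy is to pass between $\zeta_{\Omega}(s)$ and the $\epsilon$-covering function $N_{\epsilon}(\Lambda)$ through the auxiliary counting function $N(\epsilon)=\#\{j:|I_{j}|\geq \epsilon\}$. First I would observe that the intervals of length $\geq \epsilon$ split $[0,1]$ into $N(\epsilon)+1$ pieces, and each piece either is itself a short complementary interval or contains a portion of $\Lambda$ of diameter controlled by those remaining short intervals; summing, one obtains two-sided bounds of the shape
\[
c_{1}\bigl(N(\epsilon) + \epsilon^{-1}\!\!\sum_{|I_{j}|<\epsilon}\!\!|I_{j}|\bigr) \leq N_{\epsilon}(\Lambda) \leq c_{2}\bigl(N(\epsilon) + \epsilon^{-1}\!\!\sum_{|I_{j}|<\epsilon}\!\!|I_{j}|\bigr).
\]
Since $\sum_{j}|I_{j}|\leq 1$, both sides scale in the same way and the Minkowski dimension coincides with $\limsup_{\epsilon\to 0}\log N(\epsilon)/\log(1/\epsilon)$.

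Having reduced to $N(\epsilon)$, I would express the Dirichlet-type series via Abel summation as a Stieltjes integral,
\[
\zeta_{\Omega}(s) \;=\; \sum_{j=1}^{\infty}|I_{j}|^{s} \;=\; \int_{0}^{1} \epsilon^{s}\,d\bigl(-N(\epsilon)\bigr),
\]
and then compare the convergence of this integral with the polynomial growth rate of $N(\epsilon)$. Standard Dirichlet-series bookkeeping gives convergence of the series for $\Re s > \limsup \log N(\epsilon)/\log(1/\epsilon)$ and divergence for $\Re s$ below that quantity, so $\sigma_{0}=d$. The only slightly delicate point is handling the error term $\epsilon^{-1}\sum_{|I_{j}|<\epsilon}|I_{j}|$ in the comparison step, but since $\Lambda$ has Lebesgue measure zero this contribution is controlled by the same sum that appears in the zeta function at $s=1$, so it does not raise the abscissa of convergence.

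For the second assertion I would follow the Tauberian route used in Lapidus--van Frankenhuysen: first establish that $\zeta_{\Omega}(s)$ extends meromorphically to a half-plane slightly to the left of $\Re s = d$ (this is what the ``certain conditions'' encode), and then apply a Wiener--Ikehara or Mellin-inversion theorem to the Mellin transform of the volume function $V(\epsilon)=|\{x:\mathrm{dist}(x,\Lambda)<\epsilon\}|$. A single simple pole on the critical line $\Re s = d$ produces an asymptotic $V(\epsilon)\sim C\epsilon^{1-d}$, which is precisely Minkowski measurability; conversely, the existence of such a clean asymptotic can be fed back through the Mellin transform to exclude additional poles on the critical line. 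The main obstacle in this second half is supplying the meromorphic continuation and verifying the Tauberian hypotheses (polynomial growth along vertical strips, absence of oscillatory terms), which is why the result is stated only under the auxiliary conditions of \cite{LF} Theorem 6.12 rather than in full generality.
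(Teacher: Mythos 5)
There is nothing in the paper to compare your argument against: Theorem \ref{LaFr Theorem} is quoted as background from Lapidus--van Frankenhuysen \cite{LF} (the text even remarks that it holds in greater generality), and no proof is given here. Measured against the standard proof in \cite{LF}, your sketch follows essentially the same route for the first assertion: the two-sided comparison of the covering number $N_{\epsilon}(\Lambda)$ with the gap-counting function $N(\epsilon)$ plus the tail term $\epsilon^{-1}\sum_{|I_{j}|<\epsilon}|I_{j}|$ is exactly the Besicovitch--Taylor/Lapidus--Pomerance tube-formula computation, and the Abel-summation (Stieltjes-integral) step converting the growth exponent of $N(\epsilon)$ into the abscissa of convergence of $\sum_{j}|I_{j}|^{s}$ is the standard Dirichlet-series argument; as a sketch this half is sound (the lower bound on $N_{\epsilon}(\Lambda)$ needs a little care because facing endpoints of two long gaps may share a covering interval, but that only affects constants, and the tail term is handled exactly as you say because $\Lambda$ has measure zero).

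For the second assertion your outline has the right flavour but elides the genuinely hard direction. A Tauberian theorem of Wiener--Ikehara type, under the languidity hypotheses encoded in the ``certain conditions,'' does give: a single simple pole on $\Re s=D$ implies $V(\epsilon)\sim C\epsilon^{1-D}$, i.e.\ Minkowski measurability. But the converse --- that measurability excludes nonreal poles on the critical line and forces simplicity --- is not a one-line ``feed the asymptotic back through the Mellin transform.'' In \cite{LF} it goes through the intermediate characterization $\ell_{j}\sim Lj^{-1/D}$ of Minkowski measurable strings (itself a nontrivial Tauberian statement), followed by the pointwise explicit formula and a uniqueness argument for the resulting sum of oscillatory terms $x^{\omega}$, which is what actually rules out additional complex dimensions on $\Re s=D$; one cannot directly argue that extra poles would visibly spoil the asymptotic without controlling such almost-periodic cancellation. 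Since the theorem is merely imported in this paper, none of this affects the paper itself, but if your sketch were to be turned into a proof, that converse step is the gap to fill.
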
 We remark that Theorem \ref{LaFr Theorem} holds in greater generality, however for our purposes the above weaker statement in sufficient.

In \cite{ELMR}, the authors define a multifractal zeta function as follows. Let $\mu$ be a measure on $[0,1]$ and $\mathcal{B}=(P_{n})_{n=1}^{\infty}$ be a sequence of partitions $P_{n}=\{P_{n}^{i}\}$ of [0,1] with diameters tending to zero. Here a partition consists of a finite disjoint collection of intervals with positive length. The partition zeta function corresponding to regularity $\alpha$ is defined to be  $$\zeta_{\mathcal{B}}^{\mu}(\alpha,s)=\sum_{n=1}^{\infty}\sum_{i:R(P^{i}_{n})=\alpha}|P^{i}_{n}|^{s},$$ whenever this series converges. Suppose $\mu$ is a self similar measure for some IFS. When $\mathcal{B}$ is the natural sequence of partitions defined by the IFS the abscissa of convergence yields multifractal information. See also \cite{LR} and \cite{LLR}.

In \cite{LM}, L\'evy-Vehel and Mendivil also define a multifractal zeta function. Let $\mu$ be a measure on $[0,1]$ and $(P_{n})_{n=1}^{\infty}$ be a sequence of disjoint interval partitions of $[0,1]$ with diameters tending to zero. The multifractal zeta function for $\mu$ based on the sequence of partitions $(P_{n})_{n=1}^{\infty}$ is defined to be $$\zeta(q,s)=\sum_{n}\sum_{I\in P_{n}^{*}} \mu(I)^{q}|I|^{s},$$ for all $(q,s)\in \mathbb{R}^{2}$ such that this series converges. Here $P_{n}^{*}$ consists of those $I\in P_{n}$ such that $\mu(I)>0$. It is shown that $\tau(q)=-\inf\{s:\zeta(q,s)<\infty\}$ for all $q\in\mathbb{R},$ where $\tau(q)$ is as in $(\ref{VeMe recover})$.

We now define our own multifractal zeta function.
\begin{mydef}
\label{First multifractal zeta function}
Let $\Lambda$ be a cookie-cutter set and $\mu$ be a measure supported on $\Lambda$. We define the multifractal zeta function of regularity $\alpha$ to be
\label{zeta 1}
$$\zeta^{\mu}_{\alpha}(s)=\sum_{n=1}^{\infty} \sum_{\stackrel{i}{a|I^{i}_{n}|^{\alpha}\leq \mu(I^{i}_{n})\leq b|I^{i}_{n}|^{\alpha}}} |I^{i}_{n}|^{s},$$ whenever this series converges.
\end{mydef}

Here $a$ and $b$ are two fixed but arbitrary positive constants. For each $\alpha,$ the function attempts to capture information about those basic sets whose regularity is approximately $\alpha$. This approach is similar to that proposed in \cite{ELMR}, the difference being $\zeta_{\mathcal{B}}^{\mu}$ sums over sets that display an exact regularity. For suitable measures the function will give interesting information for values of $\alpha$ lying in an interval. This is in contrast to the regularity which takes only countably many values. 

Our multifractal zeta function is defined for a general measure $\mu$ supported on $\Lambda$. In what follows we shall restrict our attention to a certain class of measures called Gibbs measures. If $\psi:\Lambda \to \mathbb{R}$ is a H\"older continuous function then the Gibbs measure of $\psi$ is defined to be the unique $T$-invariant probability measure $\mu$ satisfying 
\begin{equation}
\label{Gibbs property}
C\leq \frac{\mu(I^{i}_{n})}{e^{\psi^{n}(x)-nP}}\leq D,
\end{equation} for all $x\in I^{i}_{n}$ and for constants $C,D>0$ and $P$ depending only on $\psi$. Here $P=P(\psi)$ where $P(\cdot)$ is the pressure functional and  $$\psi^{n}(x)=\psi(x)+\psi(T(x))+\cdots+\psi(T^{n-1}(x)).$$ In what follows we shall use $\mu_{\psi}$ to denote the Gibbs measure of $\psi$. For a general H\"older continuous function $\psi,$ if we let $\psi'=\psi-P(\psi)$ then $\mu_{\psi'}=\mu_{\psi}$ and by $(\ref{Gibbs property})$ $P(\psi')=0$. Clearly the Gibbs measure of any H\"older continuous function $\psi$ is also the Gibbs measure of a H\"older continuous function $\psi'$ satisfying $P(\psi')=0$. For the rest of this paper we assume that $\mu$ in Definition \ref{First multifractal zeta function} is the Gibbs measure of some H\"older continuous function $\psi$ and without loss of generality we assume that $P(\psi)=0$.

The following sets will be important in the study of $\zeta_{\alpha}^{\mu_{\psi}}$ $$\mathcal{I}_{\alpha}=\left\{\int \psi-\alpha\phi\, d\mu: \mu \textrm{ is a }T\textrm{-invariant probability measure }\right\}$$
\noindent and
$$\mathcal{R}=\{\alpha: 0\in \mathcal{I}_{\alpha}\}.$$ It is clear that $\mathcal{I}_{\alpha}$ is a closed interval. If $\alpha\notin \mathcal{R}$ then the following theorem holds.

\begin{thm}
\label{Entire thm}
If $\alpha\notin \mathcal{R},$ then $\zeta_{\alpha}^{\mu_{\psi}}$ is entire.
\end{thm}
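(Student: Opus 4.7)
The plan is to reduce the series defining $\zeta_{\alpha}^{\mu_{\psi}}$ to a thermodynamic partition sum, and then use the variational principle together with the assumption $0\notin\mathcal{I}_{\alpha}$ to make that sum converge for every $s\in\mathbb{C}$.

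First I would translate the indicator constraint $a|I_{n}^{i}|^{\alpha}\leq\mu_{\psi}(I_{n}^{i})\leq b|I_{n}^{i}|^{\alpha}$ into a constraint on Birkhoff sums of $\psi-\alpha\phi$. By the Gibbs property $(\ref{Gibbs property})$ (recalling that $P(\psi)=0$) and bounded distortion of $\phi$ on cookie cutter repellers, for any $x\in I_{n}^{i}$ one has $\log\mu_{\psi}(I_{n}^{i})=\psi^{n}(x)+O(1)$ and $\log|I_{n}^{i}|=\phi^{n}(x)+O(1)$, with constants independent of $n$ and $i$. Hence the constraint is equivalent to $|\psi^{n}(x)-\alpha\phi^{n}(x)|\leq M$ for some $M=M(\alpha,a,b)$, and $|I_{n}^{i}|^{s}=e^{s\phi^{n}(x)}\cdot O_{s}(1)$.

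Next, for any real parameter $t$, the elementary inequality $\mathbf{1}\{|u|\leq M\}\leq e^{|t|M}e^{tu}$ removes the indicator. Choosing a reference point $x_{n}^{i}\in I_{n}^{i}$ in each basic set, this gives, for any $s\in\mathbb{C}$,
\[
|\zeta_{\alpha}^{\mu_{\psi}}(s)|\ \leq\ C_{s}\,e^{|t|M}\sum_{n=1}^{\infty}\sum_{i}\exp\!\Big((\operatorname{Re}s-t\alpha)\phi^{n}(x_{n}^{i})+t\psi^{n}(x_{n}^{i})\Big).
\]
The inner sum is a standard partition function for the H\"older potential $\xi_{s,t}:=(\operatorname{Re}s-t\alpha)\phi+t\psi$; classical pressure asymptotics for cookie cutter repellers bound it by $C_{\varepsilon}\,e^{n(P(\xi_{s,t})+\varepsilon)}$ for any fixed $\varepsilon>0$. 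It therefore suffices to exhibit, for each $s$, a value of $t$ making $P(\xi_{s,t})<0$.

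To produce such a $t$ I exploit the hypothesis on $\alpha$. The space of $T$-invariant probability measures is compact and $\nu\mapsto\int(\psi-\alpha\phi)\,d\nu$ is continuous, so $\mathcal{I}_{\alpha}$ is a compact interval; since $0\notin\mathcal{I}_{\alpha}$, I may assume (after possibly negating $t$) that $c:=\inf_{\nu}\int(\psi-\alpha\phi)\,d\nu>0$. The variational principle then yields, for $t<0$,
\[
P(\xi_{s,t})\ =\ \sup_{\nu}\Big\{h_{\nu}(T)+t\!\int(\psi-\alpha\phi)\,d\nu+\operatorname{Re}s\!\int\phi\,d\nu\Big\}\ \leq\ h_{\mathrm{top}}(T)+|\operatorname{Re}s|\cdot\|\phi\|_{\infty}+tc,
\]
which tends to $-\infty$ as $t\to-\infty$. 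Hence for any $s\in\mathbb{C}$ some $t$ produces $P(\xi_{s,t})<0$, and $\zeta_{\alpha}^{\mu_{\psi}}(s)$ converges absolutely, proving the theorem. The only point requiring care is keeping the additive constants from the Gibbs and distortion estimates uniform in $n$ so that the Chebyshev step can be combined cleanly with the pressure asymptotics; this is immediate from the standard Bowen-type machinery, so no substantive obstacle arises.
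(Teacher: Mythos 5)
Your argument is correct, but it takes a genuinely different route from the paper's. Both proofs begin the same way, using the Gibbs property and bounded distortion to turn the constraint $a|I^{i}_{n}|^{\alpha}\leq\mu_{\psi}(I^{i}_{n})\leq b|I^{i}_{n}|^{\alpha}$ into the statement that the Birkhoff sums $\psi^{n}(x)-\alpha\phi^{n}(x)$ lie in a fixed bounded interval; after that they diverge. The paper argues combinatorially: if infinitely many basic sets satisfied the constraint, the periodic points they contain would give $T$-invariant measures $\mu_{x,n}$ with $\int(\psi-\alpha\phi)\,d\mu_{x,n}=O(1/n)$, contradicting the uniform gap $|\int(\psi-\alpha\phi)\,d\mu|\geq\epsilon$ coming from $0\notin\mathcal{I}_{\alpha}$ and closedness of $\mathcal{I}_{\alpha}$; hence only finitely many basic sets contribute and $\zeta_{\alpha}^{\mu_{\psi}}$ is a finite sum, trivially entire. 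You instead keep all terms and kill them analytically: exponential tilting of the indicator plus the variational principle gives $P(\operatorname{Re}s\,\phi+t(\psi-\alpha\phi))<0$ for a suitable $t=t(\operatorname{Re}s)$, so the constrained partition sums decay geometrically in $n$ and the series converges (locally uniformly in $s$, whence analyticity) for every $s\in\mathbb{C}$. Your route is a standard pressure/large-deviations estimate and quantifies the decay of the constrained sums, which could be useful elsewhere; the paper's route is shorter and yields the stronger structural fact that only finitely many basic sets satisfy the constraint --- a fact your bound also implies implicitly, since for $\operatorname{Re}s\leq 0$ every term is at least $1$, so everywhere-convergence forces finiteness. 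Two small points of presentation: your constraint is only \emph{implied by} (not equivalent to) $|\psi^{n}-\alpha\phi^{n}|\leq M$ with the stated constants, which is the direction you actually need for the upper bound; and the case $\mathcal{I}_{\alpha}\subset(-\infty,0)$ should be handled by taking $t>0$ in the same estimate rather than by ``negating $t$''.
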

\begin{proof}
Suppose $\alpha\notin \mathcal{R},$ then there exists $\epsilon>0$ such that $|\int \psi-\alpha\phi\, d\mu|\geq\epsilon,$ for all $T$-invariant probability measures. Suppose there exists infinitely many basic sets satisfying $a|I^{i}_{n}|^{\alpha}\leq \mu_{\psi}(I^{i}_{n})\leq b|I^{i}_{n}|^{\alpha}.$ By a simple argument this implies that there exists an interval [c,d] and infinitely many $x$ such that $T^{n}(x)=x$ and $\psi^{n}(x)-\alpha\phi^{n}(x)\in[c,d]$. It follows that there exists $x$ such that $T^{n}(x)=x$ and $|\int \psi-\alpha\phi\, d\mu_{x,n}|<\epsilon,$ a contradiction. Here $\mu_{x,n}$ is the $T$-invariant probability measure determined by the periodic orbit $\{x,T(x),\ldots, T^{n-1}(x)\}.$ It follows that there exist only finitely many intervals satisfying $a|I^{i}_{n}|^{\alpha}\leq \mu_{\psi}(I^{i}_{n})\leq b|I^{i}_{n}|^{\alpha}$ and $\zeta_{\alpha}^{\mu_{\psi}}$ is entire.
\end{proof}

For our purposes it is necessary to introduce the following technical condition.
\begin{mydef}
\label{Condition A}
We say that $T$ and $\mu_{\psi}$ satisfy Condition A if $\psi-\alpha\phi$ is not cohomologous to zero for all $\alpha\in\mathcal{R}$.
\end{mydef}
Two functions $f, g\in C(\Lambda,\mathbb{R})$ are cohomologous if there exists $h\in C(\Lambda,\mathbb{R})$ such that $f=g+h\circ T -h$. Moreover, Condition A is equivalent to $\mu_{\psi}\neq \mu_{\delta\phi}$. Condition $A$ also ensures that $\mathcal{I}_{\alpha}$ is a non-trivial interval for all $\alpha\in\mathcal{R}$ and that there exists $\alpha$ for which $0\in \mathrm{int}(\mathcal{I}_{\alpha})$. 

Our main results are Theorems \ref{Mega thm first statement} and \ref{Baby cohomologous to a constant thm}.

\begin{thm}
\label{Mega thm first statement}
If $T$ and $\mu_{\psi}$ satisfy Condition A and $\alpha$ is such that $0\in  \mathrm{int}(\mathcal{I}_{\alpha}),$ then for $\sigma\in \mathbb{R}$
$$\limsup_{\sigma\downarrow \delta_{\alpha}}\zeta_{\alpha}^{\mu_{\psi}}(\sigma)(\sigma-\delta_{\alpha})^{1/2}<\infty,$$ and if $b-a$ is sufficiently large the abscissa of convergence is $\delta_{\alpha}$ and $$\liminf_{\sigma\downarrow \delta_{\alpha}}\zeta_{\alpha}^{\mu_{\psi}}(\sigma)(\sigma-\delta_{\alpha})^{1/2}>0.$$ 
\end{thm}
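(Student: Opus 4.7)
The plan is to transport the regularity condition defining $\zeta_\alpha^{\mu_\psi}$ into a window constraint on a Birkhoff sum, then extract sharp asymptotics by combining the thermodynamic formalism with a local central limit theorem.

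\emph{Reformulation.} I would first invoke the Gibbs inequality (\ref{Gibbs property}) (with $P(\psi)=0$) and the bounded-distortion property of the cookie-cutter map to replace the measure and length of each basic set by exponentials of Birkhoff sums at any reference point $x_n^i\in I_n^i$: $\mu_\psi(I_n^i)\asymp e^{\psi^n(x_n^i)}$ and $|I_n^i|\asymp e^{\phi^n(x_n^i)}$. The band $a|I_n^i|^\alpha\le \mu_\psi(I_n^i)\le b|I_n^i|^\alpha$ becomes, up to a universal additive error, the window constraint $S_n(x_n^i)\in[c,d]$, with $S_n := \psi^n-\alpha\phi^n$ and $d-c=\log(b/a)+O(1)$. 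Hence
$$\zeta_\alpha^{\mu_\psi}(s)\;\asymp\;\sum_{n\ge 1}A_n(s),\qquad A_n(s)\;:=\!\!\sum_{i:\,S_n(x_n^i)\in[c,d]}\!\! e^{s\phi^n(x_n^i)}.$$

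\emph{Thermodynamic setup and sharp rate for $A_n$.} Introduce the two-parameter pressure $\Pi(s,q):=P(s\phi+q(\psi-\alpha\phi))$. Condition A says $\psi-\alpha\phi$ is not cohomologous to a constant, so $q\mapsto \Pi(s,q)$ is real-analytic and strictly convex; since $0\in\mathrm{int}(\mathcal{I}_\alpha)$, it attains its minimum at a unique interior point $q^*(s)$, characterised by $\int(\psi-\alpha\phi)\,d\mu_{s\phi+q^*(s)(\psi-\alpha\phi)}=0$. By the Pesin--Weiss/Barreira multifractal formalism for Gibbs measures, the unique root $s^*$ of $\Pi(s,q^*(s))=0$ equals $\delta_\alpha$. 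Bowen's relation gives $\sum_i e^{s\phi^n(x_n^i)+qS_n(x_n^i)}\asymp e^{n\Pi(s,q)}$, and tilting by $q^*(s)$ converts $\mathbf{1}[S_n\in[c,d]]$ into a centred window event for $S_n$ under the Gibbs measure of $s\phi+q^*(s)(\psi-\alpha\phi)$. A local central limit theorem for Birkhoff sums on cookie-cutter systems (whose positive asymptotic variance is guaranteed by Condition A) then yields
$$A_n(s)\;\asymp\;\frac{e^{n\Pi(s,q^*(s))}}{\sqrt{n}}.$$
The upper bound (Gaussian/Gärtner--Ellis concentration) is available for every admissible $a<b$; the matching lower bound needs $[c,d]$ wider than any possible arithmetic period of $\psi-\alpha\phi$, which is the role of the hypothesis that $b-a$ be sufficiently large.

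\emph{Summation.} At $s=\delta_\alpha$ we have $\Pi(s,q^*(s))=0$, and differentiating along the minimising branch gives $\tfrac{d}{ds}\Pi(s,q^*(s))=\int\phi\,d\mu_{s\phi+q^*(s)(\psi-\alpha\phi)}<0$ since $\phi<0$. Hence $\Pi(\sigma,q^*(\sigma))=-\eta(\sigma-\delta_\alpha)+O((\sigma-\delta_\alpha)^2)$ with $\eta>0$, so
$$\zeta_\alpha^{\mu_\psi}(\sigma)\;\asymp\;\sum_{n\ge 1}\frac{e^{-n\eta(\sigma-\delta_\alpha)}}{\sqrt{n}}\;\asymp\;(\sigma-\delta_\alpha)^{-1/2}$$
as $\sigma\downarrow\delta_\alpha$, which delivers the $\limsup$ inequality and, under the size hypothesis on $b-a$, the $\liminf$ inequality, together with the identification of the abscissa of convergence as $\delta_\alpha$.

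The hardest step is the two-sided $1/\sqrt{n}$ estimate for $A_n$. Condition A gives the positive asymptotic variance needed for a central limit theorem, but a lower bound of the right order for a bounded window requires a local CLT with a non-lattice hypothesis strictly finer than non-cohomology to a constant. Enlarging $b-a$ is the device that sidesteps any latent arithmetic pathology by making the window wide enough to contain the support of the LCLT density in every regime; without it one can still obtain the upper bound, which is why the $\limsup$ half of the theorem is unconditional.
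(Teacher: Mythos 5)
Your proposal is correct in outline and reaches the theorem by a genuinely different route, although it shares the paper's skeleton: like the paper, you sandwich $\zeta_{\alpha}^{\mu_{\psi}}$ between window-constrained sums over basic sets (this is Proposition \ref{New zeta derivation}), you tilt by the unique parameter making $\int \psi-\alpha\phi\,d\mu=0$ (your $q^{*}(\sigma)$ is the paper's $\xi_{\alpha}$ from Lemma \ref{Integrates to zero}), you identify the critical abscissa with $\delta_{\alpha}$ via Pesin's multifractal formalism, you obtain positive asymptotic variance from Condition A together with $0\in\mathrm{int}(\mathcal{I}_{\alpha})$, and you read ``$b-a$ sufficiently large'' as making the window wide compared with any arithmetic period. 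Where you differ is the analytic engine: you sum a local limit estimate $A_{n}(\sigma)\asymp e^{n\Pi(\sigma,q^{*}(\sigma))}/\sqrt{n}$ over $n$ (an Abelian argument), whereas the paper never leaves the zeta function --- it replaces the indicator by a band-limited $g$, writes $\zeta_{\alpha,g}$ via Fourier inversion as an integral of iterates of the Ruelle operators $L_{s,\xi,t}$, and extracts the $(\sigma-\delta_{\alpha})^{-1/2}$ singularity through perturbation theory, the Morse lemma and the Katsuda--Sunada analysis (Theorems \ref{Analytic extension 1} and \ref{Mega Prop}). Your route is shorter if the local CLT is quoted, but that is also its weak point: what you actually need is a two-sided $1/\sqrt{n}$ estimate for \emph{weighted sums over $n$-th level basic sets}, uniform in $\sigma$ on a right neighbourhood of $\delta_{\alpha}$, and valid in the possibly lattice case with a wide window; this is precisely the content the paper manufactures from the spectral theory of $L_{s,\xi,t}$ and the structure of the groups $S_{\alpha}$, $S_{\alpha,0}$ (Propositions \ref{closed groups} and \ref{Lattice S}), so quoting it conceals rather than avoids the main work. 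Two smaller repairs: G\"artner--Ellis concentration alone does not produce the $O(1/\sqrt{n})$ factor in the upper bound --- for the unconditional $\limsup$ half you need an Esseen-type concentration bound (equivalently the spectral estimate on $L_{\sigma,q^{*},t}$ for small $t$), which indeed requires no non-lattice hypothesis; and Condition A literally says $\psi-\alpha\phi$ is not cohomologous to zero, so strict convexity of $q\mapsto\Pi(s,q)$ also uses $0\in\mathrm{int}(\mathcal{I}_{\alpha})$ (cohomology to a non-zero constant would collapse $\mathcal{I}_{\alpha}$ to a point). With these points attended to your argument delivers the same conclusions; the paper's operator-theoretic route additionally yields the off-axis analytic continuation (Theorem \ref{babyprop}) and the explicit constants exploited in Section 4.
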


In the case where Condition A fails the following theorem holds.
\begin{thm}
\label{Baby cohomologous to a constant thm}
Suppose $\psi-\alpha\phi$ is cohomologous to zero. Then $\alpha=\delta$ and for $\alpha'\neq \delta,$ $\zeta_{\alpha'}$ is entire. Moreover, $\mathrm{dim}_{\mathrm{loc}}\mu_{\psi}(x)=\delta$ for all $x\in \Lambda,$ and if $a$ is sufficiently small and $b$ is sufficiently large, then $\zeta_{\delta}^{\mu_{\psi}}(s)=\sum_{n=1}^{\infty}\sum_{i}|I^{i}_{n}|^{s}.$ In this case the abscissa of convergence is $\delta.$
\end{thm}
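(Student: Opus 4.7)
The plan is to convert the cohomology assumption into a statement about pressure and then exploit uniqueness in Bowen's equation. Suppose $\psi - \alpha\phi = h\circ T - h$ for some continuous $h$. Since pressure is invariant under adding coboundaries, $P(\alpha\phi)=P(\psi)=0$. Combined with Bowen's equation $P(\delta\phi)=0$ and the observation that $t\mapsto P(t\phi)$ is strictly decreasing (its derivative is $\int\phi\,d\mu_{t\phi}$, which is bounded above by $-\log\lambda<0$, where $\lambda=\min_{x\in T^{-1}([0,1])}|T'(x)|>1$ since $|T'|$ is continuous on a compact set with $|T'|>1$), this forces $\alpha=\delta$.

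Once $\alpha=\delta$ is established, for any $\alpha'\neq\delta$ I would write $\psi-\alpha'\phi=(\delta-\alpha')\phi+h\circ T-h$. Integrating against an arbitrary $T$-invariant probability measure $\nu$ yields $(\delta-\alpha')\int\phi\,d\nu$, whose absolute value is at least $|\delta-\alpha'|\log\lambda>0$. Hence $0\notin\mathcal{I}_{\alpha'}$, so Theorem \ref{Entire thm} immediately gives that $\zeta_{\alpha'}^{\mu_{\psi}}$ is entire. For the closed form of $\zeta_\delta^{\mu_{\psi}}$ and the local dimension statement, the idea is to combine three standard estimates: the Gibbs property $\mu_{\psi}(I_n^i)\asymp e^{\psi^n(x)}$ for $x\in I_n^i$, the telescoping identity $\psi^n(x)=\delta\phi^n(x)+h(T^nx)-h(x)$ (with $h$ bounded on $\Lambda$), and the bounded distortion estimate $|I_n^i|\asymp e^{\phi^n(x)}$. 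Together these produce uniform constants $A,B>0$ with $A|I_n^i|^\delta\leq\mu_{\psi}(I_n^i)\leq B|I_n^i|^\delta$ for every basic set. Taking $a\leq A$ and $b\geq B$ forces every basic set to satisfy the inequality in Definition \ref{First multifractal zeta function}, giving $\zeta_\delta^{\mu_{\psi}}(s)=\sum_{n=1}^{\infty}\sum_{i}|I_n^i|^s$. The assertion $\dim_{\mathrm{loc}}\mu_{\psi}(x)=\delta$ for every $x\in\Lambda$ then follows by the routine covering argument: for small $r$, select the basic set $I_n^i\ni x$ with $|I_n^i|\asymp r$, use $\mu_{\psi}(I_n^i)\asymp r^\delta$ from below, and use bounded distortion to cover $B_r(x)\cap\Lambda$ by a uniformly bounded number of comparable basic sets from above.

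Finally, the abscissa of convergence of $\sum_n\sum_i|I_n^i|^s$ is computed via the thermodynamic estimate $\sum_i|I_n^i|^s\asymp e^{nP(s\phi)}$, another standard consequence of bounded distortion and the definition of the pressure as a growth rate of partition sums. Convergence of the double sum is then equivalent to $P(s\phi)<0$, which by the strict monotonicity established in the first paragraph holds precisely when $s>\delta$. The main conceptual step is the initial cohomology-to-pressure translation forcing $\alpha=\delta$; once this is secured, every remaining assertion unpacks from the Gibbs property and standard distortion estimates.
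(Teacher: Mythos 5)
Your proposal is correct and follows essentially the same route as the paper: pressure invariance under coboundaries plus Bowen's equation forces $\alpha=\delta$; the integral of the coboundary against invariant measures shows $0\notin\mathcal{I}_{\alpha'}$ so Theorem \ref{Entire thm} applies; and the Gibbs property, telescoping, and bounded distortion give the uniform bounds $\mu_{\psi}(I_n^i)\asymp|I_n^i|^{\delta}$ yielding the closed form and the abscissa $\delta$ via the growth rate $e^{nP(\sigma\phi)}$. The only cosmetic difference is that you sketch the ball-versus-basic-set covering argument for the local dimension directly, where the paper delegates this step to the proof of Theorem 4.3 in \cite{JZF}.
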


To prove Theorem \ref{Mega thm first statement} we introduce a new class of zeta function. These functions can be rewritten in terms of transfer operators, whose spectral properties can then be exploited. After proving results for these functions, Theorem \ref{Mega thm first statement} will follow by approximation arguments. In section 4 we attempt to find estimates for the limits given in Theorem \ref{Mega thm first statement} that are in some sense optimal. Under a stronger hypothesis we can obtain optimal estimates. The proof of Theorem $\ref{Baby cohomologous to a constant thm}$ will be given in section 5 and a discussion of the case where $0$ is an endpoint of $\mathcal{I}_{\alpha}$ will be given in section 6.

\subsection{Ergodic Theory}
We will recall several results from Ergodic Theory needed in our subsequent analysis. Let $\Sigma_{k}^{+}=\{ x=(x_{n})^{\infty}_{n=0}\in \{1,\ldots,k\}^{\mathbb{Z}^{+}}\}$ and $\sigma:\Sigma_{k}^{+}\to\Sigma_{k}^{+}$ denote the usual shift map. We define a metric on $\Sigma^{+}_{k}$ by $d(x,y)=2^{-n(x,y)},$ where $n(x,y)=\inf\{n\in \mathbb{Z}^{+}:x_{n}\neq y_{n}\}$. We can construct a bijective H\"older continuous map $\pi:\Sigma^{+}_{k}\to \Lambda$ with H\"older continuous inverse such that $\pi\circ \sigma=T\circ\pi$. The following results are presented in \cite{PP} in the setting of shifts of finite type. The existence of the map $\pi$ allows us to translate them to $T:\Lambda\to\Lambda$. 

Let $C^{\beta}(\Lambda,\mathbb{C})$ denote the space of H\"older continuous functions with exponent $\beta$. We define a norm on $C^{\beta}(\Lambda,\mathbb{C})$ by $\|\psi\|_{\beta}=\|\psi\|_{\infty}+|\psi|_{\beta}$, where $\|\psi\|_{\infty}=\sup\{|\psi(x)|:x\in \Lambda\}$ and $|\psi|_{\beta}=\sup_{x,y\in \Lambda,x\neq y}\{\frac{|\psi(x)-\psi(y)|}{|x-y|^{\beta}} \}$. With this norm $C^{\beta}(\Lambda,\mathbb{C})$ is a Banach space.
\begin{mydef}
\label{Fractal Ruelle Operator}
Let $\psi\in C^{\beta}(\Lambda,\mathbb{C})$, we define the Ruelle operator $L_{\psi}:C(\Lambda, \mathbb{C})\to C(\Lambda, \mathbb{C})$ by $$(L_{\psi}w)(y)=\sum_{x:Tx=y} e^{\psi(x)}w(x).$$ $L_{\psi}$ is a bounded linear operator.
\end{mydef}
The following fundamental theorem holds.

\begin{thm}
\label{Perron-Frobenius}
Let $\psi\in C^{\beta}(\Lambda,\mathbb{C})$ be real valued, then $L_{\psi}$ is a bounded linear operator on $C^{\beta}(\Lambda,\mathbb{C})$. There is a simple maximal positive eigenvalue $\lambda_{\psi}$ of $L_{\psi}:C^{\beta}(\Lambda,\mathbb{C})\to C^{\beta}(\Lambda,\mathbb{C}),$ with a corresponding strictly positive eigenfunction $h_{\psi}\in C^{\beta}(\Lambda,\mathbb{C})$. The remainder of the spectrum of $L_{\psi}:C^{\beta}(\Lambda,\mathbb{C})\to C^{\beta}(\Lambda,\mathbb{C})$ is contained in a disc of radius strictly less than $\lambda_{\psi}$.
\end{thm}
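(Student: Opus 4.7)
My plan is to reduce to the symbolic model $\Sigma_k^+$ via the conjugacy $\pi$ and then extract all of the spectral information from a Lasota--Yorke type inequality combined with positivity. Via $\pi$, $L_\psi$ acting on $C^\beta(\Lambda,\mathbb{C})$ is conjugate (after possibly adjusting the H\"older exponent) to the corresponding transfer operator on $C^\beta(\Sigma_k^+,\mathbb{C})$, so I would work symbolically throughout. Boundedness on $C(\Lambda,\mathbb{C})$ is immediate from the pointwise bound $\|L_\psi w\|_\infty\leq k\,e^{\|\psi\|_\infty}\|w\|_\infty$. The crucial estimate is the Lasota--Yorke inequality
$$|L_\psi^n w|_\beta \leq A\,\theta^n\, |w|_\beta + B_n \|w\|_\infty$$
for some fixed $\theta\in(0,1)$, obtained by expanding $(L_\psi^n w)(y)=\sum_{T^n x=y}e^{\psi^n(x)}w(x)$ and exploiting that each inverse branch of $T^n$ contracts by a factor at most $(\inf|T'|)^{-n}$ together with a bounded distortion estimate for $\psi^n$ on cylinders of order $n$.

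\textbf{Eigenfunction and quasi-compactness.} Hennion's theorem applied to this inequality gives that $L_\psi$ is quasi-compact on $C^\beta(\Lambda,\mathbb{C})$, with essential spectral radius strictly smaller than its full spectral radius $\rho$. To produce the positive eigenfunction I would apply the Schauder--Tychonoff fixed point theorem to the normalized map $w\mapsto L_\psi w/\|L_\psi w\|_\infty$ on the convex set
$$\mathcal{K}=\{\,w\in C^\beta(\Lambda,\mathbb{R}) : w\geq 0,\ \|w\|_\infty=1,\ |w|_\beta\leq M\,\},$$
choosing $M$ sufficiently large, via the Lasota--Yorke inequality, that $\mathcal{K}$ is mapped into itself; compactness of $\mathcal{K}$ in $C(\Lambda,\mathbb{R})$ follows from Arzel\`a--Ascoli. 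A fixed point $h_\psi$ is nonnegative, and a short iteration argument using the full-shift structure of $\Sigma_k^+$ upgrades this to strict positivity. Setting $\lambda_\psi=\|L_\psi h_\psi\|_\infty$, a standard comparison $|f|\leq C h_\psi$ in $C^\beta$ then gives $\|L_\psi^n f\|_\infty \leq C\lambda_\psi^n\|h_\psi\|_\infty$, so $\lambda_\psi=\rho$.

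\textbf{Simplicity and spectral gap.} The main technical obstacle is showing that $\lambda_\psi$ is a simple eigenvalue and strictly dominates the rest of the spectrum. I would invoke Birkhoff's theorem: on the cone of strictly positive H\"older functions, the projective action of a sufficiently high iterate of $L_\psi$ is a strict contraction in the Hilbert projective metric, which yields uniqueness of $h_\psi$ up to scalar multiples and rules out generalized eigenvectors at $\lambda_\psi$. To handle the peripheral spectrum, if $L_\psi f = e^{i\theta}\lambda_\psi f$ then the pointwise inequality $|f|\leq \lambda_\psi^{-1} L_\psi|f|$ together with equality on iterating forces $|f|$ to be proportional to $h_\psi$, and topological mixing of $T:\Lambda\to\Lambda$ then forces $e^{i\theta}=1$. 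Combined with quasi-compactness, this isolates $\lambda_\psi$ from the remainder of the spectrum, which must therefore lie in a disc of strictly smaller radius, as required.
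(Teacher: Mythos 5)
You should note first that the paper itself offers no proof of this statement: it is Ruelle's Perron--Frobenius theorem, imported verbatim from Parry--Pollicott \cite{PP} (stated there for subshifts of finite type) and transported to $T:\Lambda\to\Lambda$ via the H\"older conjugacy $\pi$. So there is no in-paper argument to match; your outline is essentially the standard textbook route (a Doeblin--Fortet/Lasota--Yorke inequality for quasi-compactness, a fixed-point construction of the positive eigenfunction, positivity plus mixing for simplicity and the peripheral spectrum), and in broad architecture it is sound.

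There are, however, two concrete soft spots. First, your Lasota--Yorke inequality is misstated for the \emph{unnormalized} operator: the coefficient of $|w|_\beta$ is not $A\theta^n$ but carries a factor comparable to $\|L_\psi^n 1\|_\infty\asymp e^{nP(\psi)}$, i.e. $|L_\psi^n w|_\beta\leq C\,e^{nP(\psi)}\bigl(\theta^{n\beta}|w|_\beta+\|w\|_\infty\bigr)$; the correct conclusion from Hennion is $\rho_{\mathrm{ess}}\leq \theta^{\beta}e^{P(\psi)}<e^{P(\psi)}=\rho$, which still gives quasi-compactness, but you cannot normalize away the factor before you have the eigenfunction. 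Second, and more seriously, the Schauder--Tychonoff step on the additive H\"older ball $\mathcal{K}$ does not go through as described: invariance of $\mathcal{K}$ under $w\mapsto L_\psi w/\|L_\psi w\|_\infty$ requires comparing the seminorm bound, which grows like $\|L_\psi^N 1\|_\infty$, with a lower bound on $\|L_\psi^N w\|_\infty$, and the only lower bound available from your set-up is of order $e^{-N\|\psi\|_\infty}$; the resulting contraction factor in the ratio is roughly $k\,e^{2\|\psi\|_\infty}\theta^{\beta}$, which need not be less than $1$, and passing to high iterates does not help unless you separately prove $\inf_y (L_\psi^N w)(y)\geq c\,\|L_\psi^N 1\|_\infty$ uniformly over $\mathcal{K}$ (a covering-plus-distortion argument with an awkward dependence of $N$ on $M$). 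This is precisely why the classical proofs avoid the additive ball: either apply Schauder--Tychonoff to the dual operator on probability measures, $\nu\mapsto L_\psi^{*}\nu/(L_\psi^{*}\nu)(1)$, to get the eigenvalue and eigenmeasure first, and then obtain $h_\psi$ as a limit of $\lambda_\psi^{-n}L_\psi^n 1$ using the multiplicative (Bowen) cone $\{w>0: w(x)\leq e^{Cd(x,y)^{\beta}}w(y)\}$, which \emph{is} preserved in one step by the normalized operator; or run your Birkhoff-cone contraction on that same cone, which then delivers existence, uniqueness, simplicity and the gap in one stroke. Relatedly, in your peripheral-spectrum argument the passage from $\lambda_\psi|f|\leq L_\psi|f|$ to equality needs the eigenmeasure (integrate the nonnegative difference against $\nu$ and use full support) or a maximum-principle argument; as written that step is asserted rather than proved. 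With the construction rerouted through the dual operator or the multiplicative cone, the rest of your sketch assembles into a correct proof.
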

When $\psi\in C^{\beta}(\Lambda,\mathbb{C})$ is real valued we define the pressure of $\psi$, $P(\psi)$ to be $\log \lambda_{\psi}$. Furthermore,
\begin{equation}
\label{Variational principle}
P(\psi)=\sup_{\mu}\left\{h_{\mu}(T) + \int \psi \, d\mu\right\},
\end{equation} where the supremum is taken over all $T$-invariant probability measures. This supremum is uniquely attained by $\mu_{\psi}$.

In the case where $\psi$ is complex valued the following theorem holds.

\begin{thm}
\label{Complex Ruelle theorem}
For $\psi=u+iv\in C^{\beta}(\Lambda,\mathbb{C})$ we have $\rho(L_{\psi})\leq e^{P(u)}$, where $\rho(L_{\psi})$ denotes the spectral radius of $L_{\psi}$. If $L_{\psi}$ has an eigenvalue of modulus $e^{P(u)}$ then it is simple and unique and $L_{\psi}=\alpha M L_{u} M^{-1}$, where $M$ is a multiplication operator and $\alpha \in \mathbb{C}$, $|\alpha|=1$. Furthermore the rest of the spectrum is contained in a disc of radius strictly smaller than $e^{P(u)}$. If $L_{\psi}$ has no eigenvalues of modulus $e^{P(u)}$ then the spectral radius of $L_{\psi}$ is strictly less than $e^{P(u)}$.
\end{thm}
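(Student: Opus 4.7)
The plan is to split the argument into three parts corresponding to the three claims of the theorem. First, for the spectral radius bound, I would use the pointwise domination
$$|(L_\psi^n w)(y)| \le \sum_{T^n x = y} e^{u^n(x)} |w(x)| = (L_u^n |w|)(y),$$
which follows from the triangle inequality. Taking supremum norms, extracting $n$-th roots, and letting $n\to\infty$ gives $\rho(L_\psi) \le \rho(L_u) = e^{P(u)}$ by Theorem \ref{Perron-Frobenius}.

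Next, for the analysis of eigenvalues of maximal modulus, I would first normalize: after replacing $u$ by $u - P(u)$ we may assume $P(u)=0$, and let $h_u$ be the positive eigenfunction of Theorem \ref{Perron-Frobenius} with $L_u h_u = h_u$. Suppose $L_\psi g = \lambda g$ with $|\lambda| = 1$, and write $g = h_u w$. Using the stochasticity identity $\sum_{Tx=y} p_x(y) = 1$, where $p_x(y) = e^{u(x)} h_u(x)/h_u(y) \ge 0$, the eigenvalue equation becomes a Markov-type average
$$\lambda w(y) = \sum_{Tx=y} p_x(y)\, e^{iv(x)}\, w(x).$$
Taking moduli gives $|w|(y) \le \sum_{Tx=y} p_x(y) |w|(x)$; since the induced averaging operator fixes constants and $T$ is topologically mixing on $\Lambda$, a maximum principle forces $|w|$ to be a nonzero constant which I normalise to $1$. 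The equality case in the triangle inequality then forces $e^{iv(x)} w(x) = \lambda w(Tx)$ on every preimage branch, so $v$ is cohomologous modulo $2\pi$ to the constant $\arg\lambda$. Letting $M$ denote multiplication by $w$, this cohomology identity rewrites as $L_\psi = \lambda M L_u M^{-1}$, transferring simplicity, uniqueness and the spectral gap from Theorem \ref{Perron-Frobenius} to $L_\psi$ and giving $\alpha = \lambda$.

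For the final case, where no eigenvalue of modulus $e^{P(u)}$ exists, I would prove quasi-compactness of $L_\psi$ on $C^\beta(\Lambda,\mathbb{C})$ via a Lasota--Yorke (Doeblin--Fortet) inequality
$$\|L_\psi^n f\|_\beta \le C\, \rho_1^n\, \|f\|_\beta + C_n\, \|f\|_\infty$$
for some $\rho_1 < e^{P(u)}$, exploiting the uniform expansion $|T'|>1$ and H\"older regularity of $\psi$ to control the seminorm $|\cdot|_\beta$. Combined with compactness of the inclusion $C^\beta \hookrightarrow C^0$, Hennion's theorem yields essential spectral radius strictly below $e^{P(u)}$; any spectral value on the circle of radius $e^{P(u)}$ must then be an eigenvalue, and by hypothesis there are none, giving $\rho(L_\psi) < e^{P(u)}$.

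The main obstacles are the rigidity step and the Lasota--Yorke estimate. The rigidity step depends crucially on the strictness of the triangle inequality together with the positivity of $h_u$ and the transitivity of the Markov chain induced by the weights $p_x(y)$, both of which come from Theorem \ref{Perron-Frobenius} and topological mixing of $T|_\Lambda$; without transitivity one could only conclude $|w|$ is locally constant. The Lasota--Yorke bound is standard but requires careful bookkeeping of the contraction rate obtained from expansion against the H\"older distortion of $\psi$, so as to confirm a genuine gap $\rho_1 < e^{P(u)}$ rather than mere boundedness.
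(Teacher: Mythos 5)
The paper does not prove this statement at all: Theorem \ref{Complex Ruelle theorem} is quoted as background from Parry--Pollicott \cite{PP} (transported to $T:\Lambda\to\Lambda$ via the conjugacy $\pi$), so there is no internal proof to compare against. Your argument is a correct reconstruction of the standard proof from that literature: triangle-inequality domination $|L_\psi^n w|\le L_u^n|w|$ for the bound, the normalised ``stochastic'' operator with weights $p_x(y)=e^{u(x)}h_u(x)/h_u(y)$ plus the equality case of the triangle inequality and density of $\bigcup_n T^{-n}(y_0)$ (valid here since the cookie cutter is conjugate to a full shift, hence mixing) for the rigidity step $e^{iv}=\alpha\,(w\circ T)/w$, which indeed yields $L_\psi=\alpha M L_u M^{-1}$ and transfers simplicity and the spectral gap; and Lasota--Yorke/Hennion quasi-compactness for the final case. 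One small point of bookkeeping: the sup-norm domination by itself only bounds the spectral radius of $L_\psi$ acting on $C^0$; to get $\rho(L_\psi)\le e^{P(u)}$ on $C^\beta(\Lambda,\mathbb{C})$ you should note that the essential spectral radius is already $<e^{P(u)}$ by your Lasota--Yorke estimate, so any spectral point of larger modulus would be an eigenvalue with eigenfunction in $C^\beta\subset C^0$, contradicting the $C^0$ bound --- i.e.\ your third step is also what completes your first, so the quasi-compactness estimate should be established before (or independently of) the spectral radius claim rather than only invoked in the no-eigenvalue case.
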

Suppose $L_{\psi}$ has a simple maximal eigenvalue $\lambda_{\psi}$ such that the rest of the spectrum is contained in a disc of radius strictly less than $|\lambda_{\psi}|$. We extend our definition of pressure by letting $e^{P(\psi)}=\lambda_{\psi},$ $P(\psi)$ is then defined by the principal branch of the logarithm. 

We require the following result from pertubation theory.
\begin{prop}
\label{Banach algebra}
Let $B(V)$ denote the Banach algebra of bounded linear operators on a Banach space $V$. If $L_{0}\in B(V)$ has a simple isolated eigenvalue $\lambda_{0}$ with corresponding eigenvector $v_{0}$ then for any $\epsilon >0$ there exists $\delta >0$ such that if $L\in B(V)$ with $||L-L_{0}||<\delta$ then $L$ has a simple isolated eigenvalue $\lambda(L)$ and corresponding eigenvector $v(L)$ with $\lambda(L_{0})=\lambda_{0}$, $v(L_{0})=v_{0}$ and
\begin{itemize}
\item $L \to \lambda(L)$, $L \to v(L)$ are analytic for $||L-L_{0}|| < \delta$
\item for $||L-L_{0}|| < \delta$, we have $|\lambda(L)-\lambda_{0}|<\epsilon$, and $\mathrm{spec}(L)\backslash \{\lambda(L)\}\subset \{z:|z-\lambda_{0}|>\epsilon\}$.
\end{itemize}
Moreover, if $\mathrm{spec}(L_{0})\backslash \{\lambda_{0}\}$ is contained in the interior of a circle $C$ centred at $0\in\mathbb{C}$ then provided $\delta > 0$ is sufficiently small, $\mathrm{spec}(L)\backslash\{\lambda_{L}\}$ will also be contained in the interior of $C$.
\end{prop}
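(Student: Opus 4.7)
The plan is to construct $\lambda(L)$ and $v(L)$ via the Riesz spectral projection. First I would choose $r>0$ with $r<\epsilon$ and smaller than the distance from $\lambda_{0}$ to $\mathrm{spec}(L_{0})\setminus\{\lambda_{0}\}$, and let $\gamma$ be the positively oriented circle of radius $r$ centred at $\lambda_{0}$. Then $\gamma$ lies in the resolvent set of $L_{0}$, so $M:=\sup_{z\in\gamma}\|(z-L_{0})^{-1}\|<\infty$. For $\|L-L_{0}\|<1/M$ the Neumann expansion
$$(z-L)^{-1}=(z-L_{0})^{-1}\sum_{n=0}^{\infty}\bigl((L-L_{0})(z-L_{0})^{-1}\bigr)^{n}$$
converges uniformly in $z\in\gamma$, so $\gamma\subset\rho(L)$ and $L\mapsto(z-L)^{-1}$ is operator-norm analytic. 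Consequently the Riesz projection
$$P(L):=\frac{1}{2\pi i}\oint_{\gamma}(z-L)^{-1}\,dz$$
is a bounded projection depending analytically on $L$.

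Next I would establish that $P(L)$ has rank one for $L$ close to $L_{0}$. Since $\lambda_{0}$ is simple, $P(L_{0})$ has rank one. By continuity of $P(\cdot)$ we may shrink $\delta$ so that $\|P(L)-P(L_{0})\|<1$; a standard similarity argument (Kato) then forces $\dim\mathrm{range}\,P(L)=\dim\mathrm{range}\,P(L_{0})=1$. The subspace $\mathrm{range}\,P(L)$ is $L$-invariant, hence $L$ acts on it as multiplication by a scalar $\lambda(L)$, which is the unique point of $\mathrm{spec}(L)$ inside $\gamma$. Setting $v(L):=P(L)v_{0}$, which is non-zero and analytic near $L_{0}$ because $P(L_{0})v_{0}=v_{0}$, produces the eigenvector; applying any bounded linear functional nonzero on $v_{0}$ to $\lambda(L)v(L)=Lv(L)$ shows $\lambda(L)$ is analytic as well. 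By construction $|\lambda(L)-\lambda_{0}|<r<\epsilon$ and $\gamma$ separates $\lambda(L)$ from the remaining spectrum, which yields the first two bullets.

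For the moreover clause, suppose $\mathrm{spec}(L_{0})\setminus\{\lambda_{0}\}$ is contained in the open disc $D$ bounded by $C$. Then $K:=(\mathbb{C}\setminus D)\setminus\{z:|z-\lambda_{0}|<r\}$, intersected with a large closed ball containing $\mathrm{spec}(L_{0})$ together with $\overline{D}$, is a compact set disjoint from $\mathrm{spec}(L_{0})$. Running the same Neumann series argument uniformly on this compact subset of $\rho(L_{0})$ gives $\delta'>0$ such that $\|L-L_{0}\|<\delta'$ forces $K\cap\mathrm{spec}(L)=\emptyset$; outside the large ball the resolvent is given by its norm-convergent expansion in $1/z$, which is also stable under small perturbations. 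Combined with the previous step this yields $\mathrm{spec}(L)\setminus\{\lambda(L)\}\subset D$. The main obstacle is the rank stability of $P(L)$ together with verifying upper semicontinuity of the spectrum uniformly on an unbounded region; everything else is standard Dunford--Taylor calculus.
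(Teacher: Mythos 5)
The paper gives no proof of this proposition at all: it is quoted as a standard fact from perturbation theory (essentially Kato's theory of isolated simple eigenvalues, as used in \cite{PP}), so there is no argument in the paper to compare yours against. Your Riesz-projection proof is the canonical argument and is essentially complete and correct: the Neumann-series bound on the contour, operator-norm analyticity of $L\mapsto (z-L)^{-1}$ and hence of $L\mapsto P(L)$, rank stability of the spectral projection via $\|P(L)-P(L_{0})\|<1$, the choices $v(L)=P(L)v_{0}$ and $\lambda(L)=\phi(Lv(L))/\phi(v(L))$ for a functional $\phi$ with $\phi(v_{0})\neq 0$, and the compactness argument (plus the bound $\mathrm{spec}(L)\subset\{|z|\leq\|L\|\}$) for the ``moreover'' clause are all sound.

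One small mismatch between what you prove and what is claimed: with contour radius $r<\epsilon$ your construction only yields $\mathrm{spec}(L)\setminus\{\lambda(L)\}\subset\{z:|z-\lambda_{0}|>r\}$, not $>\epsilon$. Note that the proposition as literally stated cannot hold for arbitrary $\epsilon>0$ anyway (take $L=L_{0}$ with $\epsilon$ larger than the distance from $\lambda_{0}$ to the rest of $\mathrm{spec}(L_{0})$); it is implicitly meant for $\epsilon$ smaller than that isolation distance. With that reading the repair is immediate: either take $\gamma$ to be the circle of radius $\epsilon$ itself, or keep your $\gamma$ and add a uniform Neumann-series estimate over the compact annulus $\{z: r\leq|z-\lambda_{0}|\leq\epsilon\}$, which lies in the resolvent set of $L_{0}$ and hence stays in the resolvent set of $L$ for $\|L-L_{0}\|$ small, so that $\mathrm{spec}(L)\setminus\{\lambda(L)\}$ avoids the whole closed $\epsilon$-disc. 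Everything else stands.
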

By Proposition \ref{Banach algebra}, we can assert that when $L_{\psi}$ has a simple maximal eigenvalue then $P(\cdot)$ is well defined and analytic on a neighbourhood of $\psi$. 

Finally we require the following result on the derivatives of $P(\cdot)$.

\begin{prop}
\label{Pressure derivatives}
Let $\psi,\varphi\in C^{\beta}(\Lambda,\mathbb{C})$ and $\psi$ be real valued. Then $$\frac{\partial P(\psi+t\varphi)}{\partial t}\Big|_{t=0}=\int \varphi\, d\mu_{\psi}.$$
Furthermore if $\int \varphi\, d\mu_{\psi}=0$ then $$\frac{\partial^{2} P(\psi+t\varphi)}{\partial^{2}t}\Big|_{t=0}=\lim_{n\to\infty}\frac{1}{n}\int (\varphi^{n})^{2}\,d\mu_{\psi}\geq 0,$$ with equality if and only if $\varphi$ is cohomologous to a constant.
\end{prop}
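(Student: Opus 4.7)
The plan is to use analytic perturbation theory to differentiate the eigenvalue equation for the family of Ruelle operators $L_t := L_{\psi+t\varphi}$, then rewrite the resulting expressions as integrals against the Gibbs measure $\mu_\psi$. By Theorem \ref{Perron-Frobenius} together with Proposition \ref{Banach algebra}, for $t$ near $0$ the operator $L_t$ has a simple isolated eigenvalue $\lambda(t)=e^{P(\psi+t\varphi)}$ with positive eigenfunction $h_t$ and a left eigenmeasure $\nu_t$, all depending analytically on $t$. I normalize so that $\int h_0\,d\nu_0=1$ and $\mu_\psi=h_0\nu_0$. A crucial explicit identity is $\partial_t L_t w = L_t(\varphi w)$, so all $t$-derivatives of $L_t$ reduce to multiplication followed by $L_t$.

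For the first derivative, I would differentiate $L_t h_t = \lambda(t)h_t$ at $t=0$ and pair with $\nu_0$. Using the dual eigenvector relation $\int L_\psi w\,d\nu_0 = \lambda_0 \int w\,d\nu_0$, the terms containing $h_0'$ cancel and one recovers $\lambda'(0)=\lambda_0 \int \varphi\,d\mu_\psi$, so $P'(0)=\lambda'(0)/\lambda_0=\int \varphi\,d\mu_\psi$.

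For the second derivative I assume $\int\varphi\,d\mu_\psi = 0$, so $\lambda'(0)=0$ and $P''(0)=\lambda''(0)/\lambda_0$. Differentiating the eigenvalue equation twice and pairing with $\nu_0$ yields
$$P''(0) = \int \varphi^2\,d\mu_\psi + 2\int \varphi\, h_0'\,d\nu_0.$$
The first-order equation forces $(\lambda_0 - L_\psi)h_0' = L_\psi(\varphi h_0)$, and since $L_\psi(\varphi h_0)$ integrates to $0$ against $\nu_0$ it lies in the complement of the top eigenline, where $L_\psi/\lambda_0$ has spectral radius strictly less than one by Theorem \ref{Perron-Frobenius}. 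Hence $h_0' = \sum_{n\ge 1}\lambda_0^{-n}L_\psi^n(\varphi h_0)$ (up to an additive multiple of $h_0$ that drops out against $\varphi$ since $\int\varphi\,d\mu_\psi=0$). Using the transfer-operator identity $\int w\cdot L_\psi^n v\,d\nu_0 = \lambda_0^n\int (w\circ T^n)v\,d\nu_0$, this telescopes to
$$P''(0) = \int\varphi^2\,d\mu_\psi + 2\sum_{n=1}^{\infty}\int\varphi(\varphi\circ T^n)\,d\mu_\psi.$$
Expanding $(\varphi^n)^2 = \sum_{i,j=0}^{n-1}(\varphi\circ T^i)(\varphi\circ T^j)$, using $T$-invariance of $\mu_\psi$, and invoking exponential decay of correlations (a consequence of the spectral gap) to justify Cesaro--Abel summation, yields $\frac{1}{n}\int(\varphi^n)^2\,d\mu_\psi \to P''(0)$, which is manifestly $\ge 0$.

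The main obstacle is the equality case. One direction is easy: if $\varphi = c + g\circ T - g$ with $g$ continuous, then $\varphi^n - nc = g\circ T^n - g$ is bounded, so $\int(\varphi^n)^2\,d\mu_\psi = O(1)$ and $P''(0)=0$. For the converse, $P''(0)=0$ first gives an $L^2(\mu_\psi)$ coboundary from the partial sums of the series defining $h_0'$ (these form a Cauchy sequence in $L^2$ precisely because the variance vanishes), producing $g\in L^2(\mu_\psi)$ with $\varphi - c = g\circ T - g$ almost everywhere. Upgrading this to continuity is the delicate step, handled by a Liv\v{s}ic-type argument exploiting the H\"older regularity of $\varphi$ and the periodic orbit structure of $T$: one shows $g$ agrees a.e.\ with a H\"older function by inspecting how the cocycle sums over periodic points vanish.
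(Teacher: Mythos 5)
The paper itself offers no proof of Proposition \ref{Pressure derivatives}: it is quoted as background from the standard thermodynamic-formalism literature (essentially Propositions 4.10--4.12 of \cite{PP}, transported to $T:\Lambda\to\Lambda$ via the conjugacy $\pi$), so the relevant comparison is with that standard argument --- which is precisely the perturbation-theoretic route you take. Your treatment of the first derivative (differentiate $L_t h_t=\lambda(t)h_t$, pair with $\nu_0$, the $h_0'$ terms cancel), the Green--Kubo identity $P''(0)=\int\varphi^2\,d\mu_\psi+2\sum_{n\ge1}\int\varphi\,(\varphi\circ T^n)\,d\mu_\psi$ obtained from the Neumann series for $h_0'$ and the duality $\int w\,L_\psi^n v\,d\nu_0=\lambda_0^n\int(w\circ T^n)v\,d\nu_0$, the Ces\`aro argument identifying this with $\lim_n\frac1n\int(\varphi^n)^2\,d\mu_\psi$, and the easy direction of the equality case are all correct (with the harmless caveat, inherited from the statement, that $\varphi$ should be real-valued for the variance assertion, and that under $\int\varphi\,d\mu_\psi=0$ the constant in ``cohomologous to a constant'' is necessarily $0$).

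The genuine gap is in the converse of the equality case. Your proposed mechanism --- that the partial sums of the series defining $h_0'$ are Cauchy in $L^2(\mu_\psi)$ ``precisely because the variance vanishes'' --- is not correct: that Neumann series converges exponentially fast in the H\"older norm for \emph{every} $\varphi$ with $\int\varphi\,d\mu_\psi=0$, by the spectral gap, whether or not $\sigma^2=0$, and its limit is $h_0'$, not a transfer function for $\varphi$; no coboundary for $\varphi$ comes out of it as written. Two standard repairs: (i) from exponential decay of correlations one gets $\int(\varphi^n)^2\,d\mu_\psi=n\sigma^2+O(1)$, so $\sigma^2=0$ gives $\sup_n\|\varphi^n\|_{L^2}<\infty$, whence $\varphi$ is an $L^2(\mu_\psi)$-coboundary by the classical bounded-Birkhoff-sums criterion, and only then does a Liv\v{s}ic-type upgrade enter --- but both the $L^2$ boundedness and the passage from an a.e.\ identity to the vanishing of $\varphi^n$ at periodic points are the actual content and must be argued, not asserted. (ii) Cleaner, and avoiding Liv\v{s}ic altogether: let $\hat{L}w=\lambda_0^{-1}h_0^{-1}L_\psi(h_0w)$ be the normalized operator and set $u=\sum_{n\ge1}\hat{L}^n\varphi$, which is H\"older by the spectral gap; then $m=\varphi+u-u\circ T$ satisfies $\hat{L}m=0$, the functions $m\circ T^k$ are pairwise orthogonal in $L^2(\mu_\psi)$, and one checks $\sigma^2=\int m^2\,d\mu_\psi$. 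Thus $\sigma^2=0$ forces $m=0$ $\mu_\psi$-a.e., hence everywhere on $\Lambda$ since $m$ is continuous and $\mu_\psi$ has full support, giving $\varphi=u\circ T-u$ with $u$ H\"older directly.
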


\section{Zeta functions}
As stated at the end of section $1.2$, to prove Theorem \ref{Mega thm first statement} we shall introduce a new class of zeta function. We give details of these functions here.

\begin{mydef}
\label{Function multifractal zeta function}
Suppose $T:\Lambda\to \Lambda$ is as above and $\psi:\Lambda\to\mathbb{R}$ is a H\"older continuous function with $P(\psi)=0$. Given $g:\mathbb{R}\to\mathbb{R}$, we define the $g$-zeta function of regularity $\alpha$ to be
$$\zeta_{\alpha,g}(s,\xi)=\sum_{n=1}^{\infty} \sum_{x:T^{n}x=y} e^{s\phi^{n}(x)+\xi(\psi^{n}(x)-\alpha\phi^{n}(x))}g(\psi^{n}(x)- \alpha\phi^{n}(x)),$$ for all $(s,\xi)\in \mathbb{C}\times\mathbb{R}$ such that this series converges. Here $y$ is an arbitrary element of $\Lambda$. 
\end{mydef}Provided $g$ is chosen appropriately, the summation over pre-images of $y$ will allow us to write $\zeta_{\alpha,g}$ in terms of Ruelle operators. 

The proof of the following lemma is straight forward.
\begin{lem}
\label{Holder bounds lemma}
Suppose $\phi\in C^{\beta}(\Lambda,\mathbb{C})$ and $x,y\in I^{i}_{n}$, there exists $K_{\phi}>0$ such that $$|\phi^{n}(x)-\phi^{n}(y)|\leq K_{\phi},$$ for all $n\geq 1$.
\end{lem}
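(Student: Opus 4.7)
The plan is to write $\phi^n(x)-\phi^n(y) = \sum_{k=0}^{n-1}\bigl(\phi(T^k x)-\phi(T^k y)\bigr)$ and bound each term using H\"older continuity of $\phi$ together with the exponential contraction of diameters of basic sets under the iterates of $T$. The key geometric input is that if $x,y \in I^{i}_n$, then for each $0 \le k \le n-1$ the points $T^k x$ and $T^k y$ lie in a common basic set of level $n-k$, since $T$ maps each level-$n$ basic set onto a level-$(n-1)$ basic set bijectively.

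First I would establish the diameter estimate: there exist constants $C>0$ and $\lambda>1$ (independent of $m$ and $j$) such that $|I^{j}_m| \le C\lambda^{-m}$ for every level-$m$ basic set. This is where the expanding hypothesis $|T'|>1$ enters. Because $T$ is $C^{1+\epsilon}$ on each branch, a standard bounded distortion argument shows that the restricted inverse branches $T^{-m}$ contract by a factor comparable to $\prod_{j=0}^{m-1}|T'(T^j\cdot)|^{-1}$, which is uniformly bounded above by $\lambda^{-m}$ with $\lambda = \inf |T'| > 1$.

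Next, H\"older continuity of $\phi$ with exponent $\beta$ gives
\[
|\phi(T^k x) - \phi(T^k y)| \le |\phi|_\beta \, |T^k x - T^k y|^\beta \le |\phi|_\beta \, C^\beta \, \lambda^{-\beta(n-k)},
\]
since $T^k x, T^k y$ both belong to a basic set of level $n-k$. Summing over $k=0,1,\ldots,n-1$, the resulting bound is dominated by the geometric series $\sum_{m=1}^{\infty}\lambda^{-\beta m}$, which converges to $\lambda^{-\beta}/(1-\lambda^{-\beta})$. Setting
\[
K_\phi = |\phi|_\beta\, C^\beta \, \frac{\lambda^{-\beta}}{1-\lambda^{-\beta}},
\]
yields a bound independent of $n$, $i$, and the choice of $x,y \in I^{i}_n$.

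The only real technical step is the diameter bound $|I^{j}_m| \le C\lambda^{-m}$, which requires invoking bounded distortion; everything else is a one-line H\"older estimate and summing a geometric series. Since $\phi$ is explicitly the H\"older function $-\log|T'|$ in the applications to follow (and the lemma is stated for general H\"older $\phi$), no additional regularity beyond what is hypothesised is needed.
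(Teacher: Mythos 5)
Your proof is correct and is exactly the straightforward argument the paper has in mind (the paper omits the proof entirely): telescope $\phi^{n}(x)-\phi^{n}(y)$, note that $T^{k}x$ and $T^{k}y$ lie in a common basic set of level $n-k$ whose diameter decays geometrically, apply the H\"older estimate, and sum the geometric series. One small simplification: no bounded distortion is needed for the diameter bound, since $T^{n-k}$ maps a level-$(n-k)$ basic set bijectively onto $[0,1]$ and $|(T^{n-k})'|\geq \lambda^{n-k}$ with $\lambda=\inf|T'|>1$ (by compactness of the branches), so the mean value theorem alone gives $|I^{j}_{n-k}|\leq \lambda^{-(n-k)}$.
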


The following proposition demonstrates how $\zeta_{\alpha,g}$ can approximate $\zeta_{\alpha}^{\mu_{\psi}}$ for certain choices of $g$.

\begin{prop}
\label{New zeta derivation}
Fix $\xi\in\mathbb{R},$ for $\sigma\in\mathbb{R}$ 
\begin{equation}
\label{Zeta upper bound}
\zeta_{\alpha}^{\mu_{\psi}}(\sigma)\leq e^{|K_{\phi}(\sigma-\xi\alpha)|}C^{|\xi|}\zeta_{\alpha,\chi_{1}}(\sigma,\xi),
\end{equation}
\noindent where $\chi_{1}$ is the indicator function on some interval, $C>0$ and $K_{\phi}$ is as in Lemma \ref{Holder bounds lemma}. If $b-a$ is sufficiently large, then 
\begin{equation}
\label{zeta lower bound}
e^{-|K_{\phi}(\sigma-\xi\alpha)|}D^{-|\xi|}\zeta_{\alpha,\chi_{2}}(\sigma,\xi)\leq \zeta_{\alpha}^{\mu_{\psi}}(\sigma),
\end{equation} 
where $\chi_{2}$ is the indicator function on some interval and $D>0$.
\end{prop}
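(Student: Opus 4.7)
The plan is to translate between the geometric quantity $|I_n^i|^\sigma$ and the ergodic quantity $e^{\sigma \phi^n(x) + \xi(\psi^n(x) - \alpha \phi^n(x))}$ using two standard dictionaries, exploiting the bijection between basic sets $I_n^i$ of order $n$ and $\{x : T^n x = y\}$ (since $T^n$ maps each $I_n^i$ bijectively onto $[0,1]$, each $I_n^i$ contains exactly one preimage of $y$). For $x \in I_n^i$, the mean value theorem combined with Lemma \ref{Holder bounds lemma} applied to $\phi$ yields $e^{-K_\phi} e^{\phi^n(x)} \leq |I_n^i| \leq e^{K_\phi} e^{\phi^n(x)}$, and the Gibbs property together with $P(\psi)=0$ gives $C_0 e^{\psi^n(x)} \leq \mu_\psi(I_n^i) \leq D_0 e^{\psi^n(x)}$.

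To prove (\ref{Zeta upper bound}), I would split $|I_n^i|^\sigma = |I_n^i|^{\sigma - \xi\alpha} \cdot (|I_n^i|^\alpha)^\xi$. The distortion estimate gives $|I_n^i|^{\sigma - \xi\alpha} \leq e^{|K_\phi(\sigma - \xi\alpha)|} e^{(\sigma - \xi\alpha)\phi^n(x)}$ uniformly in the sign of $\sigma - \xi\alpha$. Under the regularity condition, case-splitting on the sign of $\xi$ yields $(|I_n^i|^\alpha)^\xi \leq \max(1/a, b)^{|\xi|} \mu_\psi(I_n^i)^\xi$, and a second case split on $\xi$ using the Gibbs bounds gives $\mu_\psi(I_n^i)^\xi \leq \max(D_0, 1/C_0)^{|\xi|} e^{\xi \psi^n(x)}$. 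Combining,
$$|I_n^i|^\sigma \leq e^{|K_\phi(\sigma - \xi\alpha)|}\, C^{|\xi|}\, e^{\sigma \phi^n(x) + \xi(\psi^n(x) - \alpha \phi^n(x))}$$
for a suitable $C > 0$ independent of $n,i,x$. Moreover, the two dictionaries force $\psi^n(x) - \alpha \phi^n(x)$ into a fixed bounded interval $[c_1, d_1]$ whenever the regularity condition holds; letting $\chi_1$ be the indicator function of $[c_1, d_1]$ one may insert $\chi_1(\psi^n(x) - \alpha \phi^n(x))$ without loss. Summing over all basic sets $I_n^i$ satisfying the regularity condition, rewritten via the bijection as a sum over preimages of $y$, then gives (\ref{Zeta upper bound}).

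For (\ref{zeta lower bound}) I would reverse all the inequalities. The reverse chain shows that whenever $\psi^n(x) - \alpha \phi^n(x)$ lies in a certain interval $[c_2, d_2]$, the regularity condition \emph{is} satisfied, and lower-bound versions of the two dictionaries provide a matching lower bound on each $|I_n^i|^\sigma$. Setting $\chi_2$ to be the indicator function of $[c_2, d_2]$ and summing then yields (\ref{zeta lower bound}). The main obstacle is ensuring $[c_2, d_2]$ has non-empty interior: the chain of inequalities forces a lower bound on $\log(b/a)$ in terms of $\alpha, K_\phi, C_0, D_0$, which is the quantitative meaning of the hypothesis ``$b-a$ sufficiently large''. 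Beyond this threshold, the proof reduces to careful sign bookkeeping for $\xi$ and $\sigma - \xi\alpha$ to land on the stated constants.
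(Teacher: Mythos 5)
Your proposal is correct and follows essentially the same route as the paper: the same factorisation $|I^{i}_{n}|^{\sigma}=|I^{i}_{n}|^{\sigma-\xi\alpha}|I^{i}_{n}|^{\xi\alpha}$, the same use of the Mean Value Theorem with Lemma \ref{Holder bounds lemma} and the Gibbs property (\ref{Gibbs property}) to pass between $|I^{i}_{n}|$, $\mu_{\psi}(I^{i}_{n})$ and $e^{\phi^{n}(x)}$, $e^{\psi^{n}(x)}$, with the regularity bounds forcing $\psi^{n}(x)-\alpha\phi^{n}(x)$ into the interval defining $\chi_{1}$, and conversely for $\chi_{2}$, the hypothesis that $b-a$ is sufficiently large serving exactly to make that interval non-empty. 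You merely make explicit the details (the bijection between basic sets and preimages of $y$, the sign case analysis, and the quantitative threshold on $\log(b/a)$) that the paper's proof leaves implicit.
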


\begin{proof}
We begin by rewriting $|I^{i}_{n}|^{\sigma}$ as $|I^{i}_{n}|^{\sigma-\xi\alpha}|I^{i}_{n}|^{\xi\alpha}$. Our result then follows by an application of (\ref{Gibbs property}), Lemma \ref{Holder bounds lemma}, the Mean Value Theorem and the bounds $a|I^{i}_{n}|^{\alpha}\leq \mu_{\psi}(I^{i}_{n})\leq b|I^{i}_{n}|^{\alpha}$. The property that $b-a$ is sufficiently large ensures that the interval on which $\chi_{2}$ is the indicator function is well defined.
\end{proof}

In our later analysis $\xi$ will be a unique value depending on $\alpha$. When $\xi$ is this unique value we can prove divergence results for $\zeta_{\alpha,g}(\cdot,\xi),$ for a class of $g$. These results will then be used to prove Theorem \ref{Mega thm first statement}.

\subsection{Convergence and analyticity of $\zeta_{\alpha,g}$ and $\zeta_{\alpha}^{\mu_{\psi}}$}
In what follows we assume $g$ is a $C^{\infty}$ function in $L^{1}(\mathbb{R})$ whose Fourier transform $$\hat{g}(t)=\int_{-\infty}^{\infty} g(x)e^{itx}\,  dx, $$is compactly supported and $\hat{g}(0)\neq 0$. Intuitively, we think of $g$ as an approximation to the indicator functions given in Proposition $\ref{New zeta derivation}$. However, we choose $g$ to ensure that $\zeta_{\alpha,g}$ has good analytic properties.

We now observe how $\zeta_{\alpha,g}$ can be expressed in terms of Ruelle operators. By the Fourier inversion formula
\begin{equation}
\label{Approx rewrite 1}
\zeta_{\alpha,g}(s,\xi)= \frac{1}{2\pi}\int_{-\infty}^{\infty} \hat{g}(-t) \sum_{n=1}^{\infty} \sum _{x:T^{n}x=y} e^{s\phi^{n}(x) + \xi(\psi^{n}(x)-\alpha^{n}(x))+it (\psi^{n}(x)-\alpha\phi^{n}(x))}\, dt.
\end{equation}
\noindent Recalling Definition \ref{Fractal Ruelle Operator} we observe that 
\begin{equation}
\label{Approx rewrite 2}
\zeta_{\alpha,g}(s,\xi)= \frac{1}{2\pi}\int_{-\infty}^{\infty} \hat{g}(-t) \sum_{n=1}^{\infty}L^{n}_{s\phi+ \xi(\psi-\alpha\phi)+it (\psi-\alpha\phi)}1(y)\, dt.
\end{equation} For ease of exposition we let $L_{s\phi+ \xi(\psi-\alpha\phi)+it (\psi-\alpha\phi)}=L_{s,\xi,t}$ and  $P(s\phi+ \xi(\psi-\alpha\phi)+it (\psi-\alpha\phi))=P(s,\xi,t)$ when $L_{s,\xi,t}$ has a simple maximal eigenvalue. Moreover, we denote the projection onto the eigenspace corresponding to $e^{P(s,\xi,t)}$ by $\pi_{s,\xi,t}$ and the complementary projection by $\pi_{s,\xi,t}^{c}$.

Fix $\xi\in\mathbb{R},$ $P(\delta\phi+\xi(\psi-\alpha\phi))$ is strictly decreasing as a function of $\delta$ and $\lim_{\delta\to \pm\infty}P(\delta\phi+\xi(\psi-\alpha\phi))= \mp\infty$. It follows that we can define a function $\delta_{\alpha}(\xi)$ implicitly via the equation $P(\delta_{\alpha}(\xi)\phi+\xi(\psi-\alpha\phi))=0.$ Furthermore $\delta_{\alpha}(\xi)$ is analytic. This function will be important in determining the domains of convergence and analyticity of $\zeta_{\alpha,g}$ and $\zeta_{\alpha}^{\mu_{\psi}}.$ We denote the Gibbs measure of $\delta_{\alpha}(\xi)\phi+\xi(\psi-\alpha\phi)$ by $\mu_{\xi,\alpha}.$

\begin{prop}
\label{Analyticity and convergence of functions}
Fix $\xi\in\mathbb{R}$, $\zeta_{\alpha,g}(s,\xi)$ is analytic on $\{s\in\mathbb{C}:\mathrm{Re}(s)>\delta_{\alpha}(\xi)\}$.
\end{prop}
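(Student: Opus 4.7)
The plan is to use the Ruelle-operator representation $(\ref{Approx rewrite 2})$ together with the compact support of $\hat{g}$ to reduce analyticity of $s \mapsto \zeta_{\alpha,g}(s,\xi)$ to a locally uniform geometric-decay estimate on $L_{s,\xi,t}^n 1(y)$, uniform for $t$ in $\mathrm{supp}(\hat{g})$.

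The key observation is that the triangle inequality, applied to the pre-image sum defining the Ruelle operator, yields
$$|L_{s,\xi,t}^n 1(y)| = \left|\sum_{x:T^n x = y} e^{s\phi^n(x) + (\xi+it)(\psi^n(x) - \alpha\phi^n(x))}\right| \leq \sum_{x:T^n x = y} e^{u^n(x)} = L_u^n 1(y),$$
where $u := \mathrm{Re}(s)\phi + \xi(\psi-\alpha\phi)$ is real-valued. This bound is independent of both $t$ and $\mathrm{Im}(s)$, so it sidesteps the question of whether $L_{s,\xi,t}$ falls into the first or second case of Theorem \ref{Complex Ruelle theorem}. By Theorem \ref{Perron-Frobenius}, $L_u^n 1(y) \leq C(u) e^{nP(u)}$ for some constant $C(u)$ depending continuously on $u$.

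Next I would exploit the hypothesis $\mathrm{Re}(s) > \delta_\alpha(\xi)$. Since $\delta \mapsto P(\delta\phi + \xi(\psi-\alpha\phi))$ is strictly decreasing and vanishes at $\delta = \delta_\alpha(\xi)$, this forces $P(u) < 0$. On any compact $K \subset \{\mathrm{Re}(s) > \delta_\alpha(\xi)\}$, continuity produces uniform bounds $P(u) \leq -c_K < 0$ and $C(u) \leq C_K < \infty$. Combined with the boundedness and compact support of $\hat{g}$, this makes the double series-and-integral in $(\ref{Approx rewrite 2})$ absolutely and uniformly convergent on $K$, uniformly in $t \in \mathrm{supp}(\hat{g})$.

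Finally, for each fixed $t$ and $n$ the map $s \mapsto L_{s,\xi,t}^n 1(y)$ is a finite combination of entire exponentials $e^{s\phi^n(x)}$, hence entire in $s$. For any closed contour $\gamma \subset K$, Fubini's theorem (justified by the uniform bound above) together with Cauchy's theorem applied to each integrand gives $\oint_\gamma \zeta_{\alpha,g}(s,\xi)\, ds = 0$, and Morera's theorem then delivers analyticity on the half-plane. The main technical point I expect to need care with is the continuous dependence of $C(u)$ on $u$; this rests on the stability of the Perron-Frobenius eigenfunction $h_u$ under perturbations of the potential, a standard consequence of Proposition \ref{Banach algebra} applied to the real operator $L_u$.
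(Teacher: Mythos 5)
Your argument is correct and is essentially the paper's proof: both start from the representation $(\ref{Approx rewrite 2})$ and show that $\mathrm{Re}(s)>\delta_{\alpha}(\xi)$ forces the relevant decay rate of $L^{n}_{s,\xi,t}1(y)$ strictly below $1$, uniformly for $t\in\mathrm{supp}(\hat{g})$, so the series-integral converges and defines an analytic function on the half-plane. The only variation is in one step: the paper gets the geometric decay from the spectral radius bound $\rho(L_{s,\xi,t})\leq e^{P(\mathrm{Re}(s),\xi,0)}$ of Theorem \ref{Complex Ruelle theorem}, whereas you obtain the same decay by dominating $|L^{n}_{s,\xi,t}1(y)|$ by the iterates of the real operator $L_{\mathrm{Re}(s)\phi+\xi(\psi-\alpha\phi)}$ via the triangle inequality and Theorem \ref{Perron-Frobenius}, and your Morera/uniform-convergence step simply makes explicit the locally uniform convergence that the paper leaves implicit.
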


\begin{proof}
If $\rho(L_{s,\xi,t})<1$, then for $n$ sufficiently large, $L^{n}_{s,\xi,t}1(y)=O(\theta^{n}),$ for some $\theta<1$. It follows from (\ref{Approx rewrite 2}) that if $\rho(L_{s,\xi,t})<1$ for all $t\in\mathrm{supp}(\hat{g}),$ then $\zeta_{\alpha,g}(s,\xi)$ converges. By Theorem \ref{Complex Ruelle theorem}, $\rho(L_{s,\xi,t})\leq e^{P(\textrm{Re}(s),\xi,0)}$. If $\textrm{Re}(s)>\delta_{\alpha}(\xi)$ then $e^{P(\textrm{Re}(s),\xi,0)}<1,$ therefore $\zeta_{\alpha,g}(s,\xi)$ converges and is analytic on $\{s\in\mathbb{C}:\textrm{Re}(s)>\delta_{\alpha}(\xi)\}$.
\end{proof}

\begin{cor}
\label{analyticity of zeta}
$\zeta_{\alpha}^{\mu_{\psi}}(s)$ is analytic on $\{s\in\mathbb{C}: \mathrm{Re}(s)>\inf_{\xi\in\mathbb{R}} \delta_{\alpha}(\xi)\}$.
\end{cor}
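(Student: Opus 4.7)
The strategy is, for any fixed $\xi\in\mathbb{R}$, to build a chain
$$\zeta_{\alpha}^{\mu_{\psi}}(\sigma) \;\le\; M(\xi,\sigma)\,\zeta_{\alpha,\chi_{1}}(\sigma,\xi) \;\le\; M(\xi,\sigma)\,\zeta_{\alpha,g}(\sigma,\xi)$$
valid for real $\sigma$. The first inequality is Proposition \ref{New zeta derivation}, with $M(\xi,\sigma)=e^{|K_{\phi}(\sigma-\xi\alpha)|}C^{|\xi|}$; the second will come from choosing a $C^{\infty}$ dominator $g$ of the indicator $\chi_{1}$ which still satisfies the hypotheses needed to invoke Proposition \ref{Analyticity and convergence of functions}. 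Once such a chain is in place, the corollary follows by letting $\xi$ range over $\mathbb{R}$ and applying a standard Dirichlet series argument.

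The main work, and the only real obstacle, is the construction of $g$: Paley--Wiener prevents a function with compactly supported Fourier transform from having compact support, so a naive bump-function dominator of $\chi_{1}$ is unavailable. Writing $\chi_{1}=\chi_{[a',b']}$, I would take $g(x)=K\,\mathrm{sinc}^{2}(r(x-x_{0}))$, where $x_{0}$ is the midpoint of $[a',b']$, $r>0$ is small enough that $[a',b']$ lies strictly inside the central lobe of $\mathrm{sinc}^{2}(r(\cdot-x_{0}))$, and $K>0$ is large enough that $g\ge 1$ on $[a',b']$. Then $g$ is smooth, non-negative, and integrable; its Fourier transform is (up to normalisation) a scaled and translated triangular function, hence compactly supported with $\hat{g}(0)>0$; and $g\ge\chi_{1}$ pointwise. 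Since the summands in Definition \ref{Function multifractal zeta function} are non-negative whenever $\sigma\in\mathbb{R}$ and $g\ge 0$, the pointwise inequality propagates termwise to $\zeta_{\alpha,\chi_{1}}(\sigma,\xi)\le\zeta_{\alpha,g}(\sigma,\xi)$, and Proposition \ref{Analyticity and convergence of functions} then makes the right-hand side finite whenever $\sigma>\delta_{\alpha}(\xi)$.

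Combining the two estimates gives $\zeta_{\alpha}^{\mu_{\psi}}(\sigma)<\infty$ for every real $\sigma>\delta_{\alpha}(\xi)$; as $\xi$ was arbitrary, this holds for every real $\sigma>\inf_{\xi\in\mathbb{R}}\delta_{\alpha}(\xi)$. Since $\zeta_{\alpha}^{\mu_{\psi}}(s)=\sum_{n,i}|I_{n}^{i}|^{s}$ (summed over the indices satisfying the regularity constraint) and each $|I_{n}^{i}|<1$, we have $\bigl||I_{n}^{i}|^{s}\bigr|=|I_{n}^{i}|^{\mathrm{Re}(s)}\le|I_{n}^{i}|^{\sigma_{0}}$ whenever $\mathrm{Re}(s)\ge\sigma_{0}$. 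Hence absolute convergence at any real $\sigma_{0}>\inf_{\xi}\delta_{\alpha}(\xi)$ propagates to absolute and uniform convergence on $\{\mathrm{Re}(s)\ge\sigma_{0}\}$, and by Weierstrass the sum is analytic on $\{\mathrm{Re}(s)>\sigma_{0}\}$. Letting $\sigma_{0}\downarrow\inf_{\xi}\delta_{\alpha}(\xi)$ delivers the claimed analyticity.
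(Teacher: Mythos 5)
Your argument is correct and follows essentially the same route as the paper: dominate $\zeta_{\alpha}^{\mu_{\psi}}(\sigma)$ via Proposition \ref{New zeta derivation}, choose an admissible $g\geq\chi_{1}$, invoke Proposition \ref{Analyticity and convergence of functions}, and let $\xi$ range over $\mathbb{R}$. The only differences are that you supply details the paper leaves implicit — the explicit Fej\'er-kernel ($\mathrm{sinc}^{2}$) construction of the dominating $g$, and the standard argument that convergence of the non-negative series at a real point yields analyticity on the half-plane to its right — both of which are sound.
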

\begin{proof}
Let $\xi\in\mathbb{R}$ and $\chi_{1}$ be the indicator function given in (\ref{Zeta upper bound}). We can pick $g$ of the above form such that $g(x)\geq \chi_{1}(x)$ for all $x\in\mathbb{R}.$ Therefore, $\zeta_{\alpha,g}(\sigma,\xi)\geq \zeta_{\alpha,\chi_{1}}(\sigma,\xi)$ for $\sigma\in\mathbb{R}$, an application of Propositions \ref{New zeta derivation} and $\ref{Analyticity and convergence of functions}$ implies that $\zeta_{\alpha}^{\mu_{\psi}}(s)$ is analytic on $\{s\in\mathbb{C}:\mathrm{Re}(s)>\delta_{\alpha}(\xi)\}$. Since $\xi$ was arbitrary our result follows.

\end{proof}

\section{Analysis of $\zeta_{\alpha,g}$ and a proof of Theorem \ref{Mega thm first statement}}
In this section we analyse $\zeta_{\alpha,g}$ and prove Theorem \ref{Mega thm first statement}. Our main results for $\zeta_{\alpha,g}$ are Theorems \ref{babyprop} and \ref{baby thm}. Theorem \ref{baby thm} will be used to prove Theorem \ref{Mega thm first statement}.

\begin{thm}
\label{babyprop}
Let $v\in\mathbb{R}$ and $\xi$ be fixed. If $v\phi$ fails to satisfy a certain cohomological equation depending on $\alpha,$ then $\zeta_{\alpha,g}(s,\xi)$ extends to an analytic function on a neighbourhood of $\delta_{\alpha}(\xi) +iv.$ 
\end{thm}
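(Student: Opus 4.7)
The strategy is to exploit the Ruelle operator representation (\ref{Approx rewrite 2}) together with Theorem \ref{Complex Ruelle theorem}. Writing
\[
\zeta_{\alpha,g}(s,\xi) = \frac{1}{2\pi}\int_{-\infty}^{\infty} \hat g(-t)\, F(s,\xi,t)\, dt, \qquad F(s,\xi,t) := \sum_{n=1}^{\infty} L_{s,\xi,t}^{n}1(y),
\]
one has formally $F(s,\xi,t) = L_{s,\xi,t}(I-L_{s,\xi,t})^{-1}1(y)$ whenever $1 \notin \mathrm{spec}(L_{s,\xi,t})$. The plan is to establish that on some open neighbourhood $U$ of $s_{0}:=\delta_{\alpha}(\xi)+iv$ there exists $\theta<1$ with $\rho(L_{s,\xi,t})<\theta$ uniformly for $(s,t)\in U\times\mathrm{supp}(\hat g)$. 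The Neumann series then converges uniformly, $F$ is analytic in $s$ with $\|\cdot\|_{\infty}$ bounds uniform in $t$, and differentiation under the integral produces the desired analytic continuation of $\zeta_{\alpha,g}(\cdot,\xi)$ to $U$.

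For the uniform spectral bound I would first work at $s=s_{0}$ itself. The real part of the potential driving $L_{s_{0},\xi,t}$ is $\delta_{\alpha}(\xi)\phi+\xi(\psi-\alpha\phi)$, whose pressure is zero by the defining equation for $\delta_{\alpha}(\xi)$; hence Theorem \ref{Complex Ruelle theorem} gives $\rho(L_{s_{0},\xi,t})\le 1$, with strict inequality unless the dichotomy case holds, i.e.\ $L_{s_{0},\xi,t}=\lambda M L_{u_{0}}M^{-1}$ for a unimodular $\lambda$ and a multiplication operator $M$. Unwrapping this conjugacy yields the cohomological equation
\[
v\phi(x) + t\bigl(\psi(x)-\alpha\phi(x)\bigr) = c + h(Tx)-h(x) + 2\pi k(x),
\]
for some $c\in\mathbb{R}$, H\"older $h\in C^{\beta}(\Lambda,\mathbb{R})$, and locally constant $k:\Lambda\to\mathbb{Z}$; moreover $1\in\mathrm{spec}(L_{s_{0},\xi,t})$ further requires $c\in 2\pi\mathbb{Z}$. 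The hypothesis is exactly that $v\phi$ satisfies no such equation for any $t\in\mathrm{supp}(\hat g)$, which excludes the bad alternative pointwise in $t$. Joint operator-norm continuity of $(s,t)\mapsto L_{s,\xi,t}$, upper semicontinuity of the spectral radius, and compactness of $\mathrm{supp}(\hat g)$ then upgrade this pointwise bound to the required uniform bound on a neighbourhood of $\{s_{0}\}\times\mathrm{supp}(\hat g)$.

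The main obstacle is formulating ``the certain cohomological equation'' as a single condition on $v\phi$ alone that rules out the whole $t$-indexed family above. I expect the reduction to proceed as follows: if two distinct $t_{1},t_{2}\in\mathrm{supp}(\hat g)$ both yielded a cohomology of the displayed form, subtracting the two relations forces $(t_{1}-t_{2})(\psi-\alpha\phi)$ to be cohomologous to a constant modulo $2\pi\mathbb{Z}$, which can be ruled out by Condition A (Definition \ref{Condition A}) via a discreteness argument on the set of admissible $t$. Once the $t$-family reduces to at most one representative, the surviving equation becomes a single restriction on $v\phi$, depending on $\alpha$ through $\psi-\alpha\phi$; after that the spectral argument of the previous paragraph closes the proof, with the analytic continuation obtained from Morera's theorem applied to the integrated Neumann series.
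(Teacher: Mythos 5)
Your plan stands or falls on the uniform bound $\rho(L_{s,\xi,t})\le\theta<1$ for all $t\in\mathrm{supp}(\hat g)$, and that bound is not available under the actual hypothesis of the theorem. The ``certain cohomological equation'' is made precise in Theorem \ref{Analytic extension 1}: the assumption is $\mathrm{supp}(\hat g)\cap S_{\alpha,0}^{v}=\emptyset$, i.e.\ no $t$ in the support makes $v\phi+t(\psi-\alpha\phi)$ cohomologous to a $2\pi\mathbb{Z}$-valued function, so that $1$ is never an eigenvalue of $L_{\delta_{\alpha}(\xi)+iv,\xi,t}$. It does \emph{not} exclude $t\in S_{\alpha}^{v}\setminus S_{\alpha,0}^{v}$, for which $v\phi+t(\psi-\alpha\phi)$ is cohomologous to some $\Phi+b$ with $b\notin 2\pi\mathbb{Z}$: such $t$ give $\rho(L_{s_{0},\xi,t})=1$ at $s_{0}=\delta_{\alpha}(\xi)+iv$, with peripheral eigenvalue $e^{ib}\neq 1$, and they can fill the whole of $\mathrm{supp}(\hat g)$ (this is exactly Case 1 of the paper's proof, where $S_{\alpha}^{v}\cap\mathrm{supp}(\hat g)=\mathrm{supp}(\hat g)$). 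For those $t$ your Neumann series has no geometric decay at $s_{0}$, upper semicontinuity of the spectral radius gives nothing below $1$, and the argument collapses. In effect your proposal proves the statement only under the strictly stronger hypothesis $\mathrm{supp}(\hat g)\cap S_{\alpha}^{v}=\emptyset$, which would leave the case $\mathrm{supp}(\hat g)\cap S_{\alpha}^{v}\neq\emptyset$ but $\mathrm{supp}(\hat g)\cap S_{\alpha,0}^{v}=\emptyset$ uncovered, whereas Theorems \ref{babyprop} and \ref{baby thm} are meant to split exactly according to whether $\mathrm{supp}(\hat g)$ meets $S_{\alpha,0}^{v}$.

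The missing ingredient is the spectral projection step. For $t$ in (or near) $S_{\alpha}^{v}\cap\mathrm{supp}(\hat g)$ the paper uses Proposition \ref{Banach algebra} to write $L_{s,\xi,t}^{n}=L_{s,\xi,t}^{n}\pi_{s,\xi,t}+L_{s,\xi,t}^{n}\pi^{c}_{s,\xi,t}$; the complementary part has spectral radius strictly less than $1$ uniformly on a neighbourhood, while the eigenvalue part sums in closed form to $\hat g(-t)\,e^{P(s,\xi,t)}\pi_{s,\xi,t}(1)(y)/\bigl(1-e^{P(s,\xi,t)}\bigr)$, which is analytic near $\delta_{\alpha}(\xi)+iv$ precisely because the hypothesis $t\notin S_{\alpha,0}^{v}$ guarantees $e^{P(s,\xi,t)}\neq 1$ there. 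The remaining $t$, bounded away from $S_{\alpha}^{v}$, are handled by your spectral-radius argument; Proposition \ref{closed groups} and the dense/discrete dichotomy show $S_{\alpha}^{v}\cap\mathrm{supp}(\hat g)$ is either all of $\mathrm{supp}(\hat g)$ or finite, which is all the structure one needs. A second, smaller defect: your final paragraph invokes Condition A (Definition \ref{Condition A}) to thin the family of admissible $t$ down to one representative, but the paper explicitly does not assume Condition A for this theorem, and no such reduction is needed --- the projection argument works even when every $t$ in the support satisfies the weaker cohomology relation.
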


\begin{thm}
\label{baby thm}
Suppose $T$ and $\mu_{\psi}$ satisfy Condition A and $\alpha$ is such that $0\in \mathrm{int}(\mathcal{I}_{\alpha}).$ If $v\phi$ satisfies a certain cohomological equation depending on $\alpha,$ then there exists a unique $\xi$ depending on $\alpha$ such that for $s$ in a neighbourhood of $\delta_{\alpha}+iv$ with $\mathrm{Re}(s)>\delta_{\alpha}$ $$\zeta_{\alpha,g}(s,\xi)=\frac{c_{v}(s)}{(s-iv-\delta_{\alpha})^{1/2}} +f(s),$$ where $c_{v}(s)$ is analytic on a neighbourhood of $\delta_{\alpha}+iv$ and $f(s)$ is continuous on $\{s\in\mathbb{C}:\mathrm{Re}(s)\geq \delta_{\alpha}\}$. 
\end{thm}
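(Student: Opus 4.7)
The strategy is a spectral analysis of the complex Ruelle operators $L_{s,\xi,t}$ via the representation (\ref{Approx rewrite 2}): the singularity at $s=\delta_\alpha+iv$ will arise from the one value of $t$ in the support of $\hat g$ at which $L_{s,\xi,t}$ has an isolated simple maximal eigenvalue equal to $1$, and the cohomological equation on $v\phi$ is precisely what places this configuration at $t=0$. First I identify the unique $\xi$. Implicitly differentiating $P(\delta_\alpha(\xi)\phi+\xi(\psi-\alpha\phi))=0$ and using Proposition \ref{Pressure derivatives} gives
$$\delta_\alpha'(\xi)=-\frac{\int(\psi-\alpha\phi)\,d\mu_{\xi,\alpha}}{\int\phi\,d\mu_{\xi,\alpha}}.$$
Condition A together with $0\in\mathrm{int}(\mathcal I_\alpha)$ make $\delta_\alpha(\xi)$ strictly convex (its second derivative is a strictly positive variance) with a unique minimiser $\xi=\xi(\alpha)$ satisfying $\int(\psi-\alpha\phi)\,d\mu_{\xi,\alpha}=0$; the multifractal formalism identifies $\delta_\alpha(\xi(\alpha))=\delta_\alpha$. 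With this $\xi$ the cohomological hypothesis on $v\phi$ conjugates $L_{s,\xi,0}$, by a multiplication operator $M_\eta$ with $\eta\in C(\Lambda,S^1)$ implementing the cohomology, to $L_{\sigma\phi+\xi(\psi-\alpha\phi)}$ for $s=\sigma+iv$, so in particular $\lambda(\delta_\alpha+iv,\xi,0)=1$. For $t\in\mathrm{supp}(\hat g)\setminus\{0\}$, by Condition A the imaginary part $v\phi+t(\psi-\alpha\phi)$ no longer admits such a cohomology, so Theorem \ref{Complex Ruelle theorem} gives $\rho(L_{s,\xi,t})<1$ uniformly for $s$ in a closed half-disc about $\delta_\alpha+iv$ with $\mathrm{Re}(s)\geq\delta_\alpha$.

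Proposition \ref{Banach algebra} then lets me split, for $(s,t)$ near $(\delta_\alpha+iv,0)$,
$$\sum_{n=1}^\infty L_{s,\xi,t}^n 1(y)=\frac{\lambda(s,\xi,t)\,[\pi_{s,\xi,t}1](y)}{1-\lambda(s,\xi,t)}+R(s,t),$$
with $R$ jointly analytic in $(s,t)$ and uniformly bounded on that closed half-disc. The similarity above gives $\lambda(\sigma+iv,\xi,t)=e^{P(\sigma,\xi,t)}$ locally, so Proposition \ref{Pressure derivatives} together with the defining property of $\xi$ yields at $(s,t)=(\delta_\alpha+iv,0)$
$$\lambda=1,\qquad \partial_s\lambda=-A,\qquad \partial_t\lambda=0,\qquad \partial_t^2\lambda=-V,$$
with $A=-\int\phi\,d\mu_{\xi,\alpha}>0$ and $V=\lim_{n\to\infty}\tfrac{1}{n}\int((\psi-\alpha\phi)^n)^2\,d\mu_{\xi,\alpha}>0$ (strict by Condition A). Setting $w=s-iv-\delta_\alpha$ gives $1-\lambda(s,\xi,t)=Aw+\tfrac{V}{2}t^2+O(|w|^2+|w||t|+|t|^3)$. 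Inserting this into (\ref{Approx rewrite 2}), localising with a smooth cut-off $\chi(t)$ supported near $t=0$, and using the classical $\int_{-\infty}^{\infty} dt/(a+bt^2)=\pi/\sqrt{ab}$ with the branch of $\sqrt{\,\cdot\,}$ determined by $\mathrm{Re}(w)>0$ produces
$$\zeta_{\alpha,g}(s,\xi)=\frac{\hat g(0)\,[\pi_{\delta_\alpha+iv,\xi,0}1](y)}{\sqrt{2AV}}\cdot\frac{1}{\sqrt{w}}+\text{corrections};$$
the $w$-dependent corrections assemble into an analytic factor $c_v(s)$ on a neighbourhood of $\delta_\alpha+iv$, while the contributions from $(1-\chi)(t)$, from $R(s,t)$, and from $t\in\mathrm{supp}(\hat g)\setminus\{0\}$ aggregate into a function $f(s)$ continuous on $\{s:\mathrm{Re}(s)\geq\delta_\alpha\}$.

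The main obstacle is the uniform spectral gap required in the second paragraph: one must verify that for fixed $v$ the cohomological equation for $v\phi+t(\psi-\alpha\phi)$ has only $t=0$ as a local solution, and then, via compactness of $\mathrm{supp}(\hat g)$, upgrade this to a uniform spectral gap in a closed half-disc meeting the critical line $\mathrm{Re}(s)=\delta_\alpha$; this is precisely what lets $R$ and the tail contributions extend continuously up to $\mathrm{Re}(s)=\delta_\alpha$. Once that qualitative picture is secured the square-root behaviour drops out of an essentially explicit Gaussian-type integral.
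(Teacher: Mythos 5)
Your overall strategy is the same as the paper's (Fourier inversion into Ruelle operators, perturbation theory for the leading eigenvalue, quadratic vanishing in $t$ forced by $\int(\psi-\alpha\phi)\,d\mu_{\xi_{\alpha},\alpha}=0$, hence a square-root singularity), and your identification of the unique $\xi$ agrees with Lemma \ref{Integrates to zero}. However, there is a genuine gap in your spectral-gap step. You claim that Condition A forces $\rho(L_{s,\xi,t})<1$ for every $t\in\mathrm{supp}(\hat{g})\setminus\{0\}$, uniformly on a closed half-disc. That is false in general: Condition A only guarantees, via Livsic's theorem (Proposition \ref{Lattice S}) and Proposition \ref{closed groups}, that $S_{\alpha,0}$ is a \emph{discrete} subgroup $\kappa\mathbb{Z}$ and that $S_{\alpha}$ is closed; it does not make these groups trivial. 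Consequently $\mathrm{supp}(\hat{g})$ may contain several points of the coset $S^{v}_{\alpha,0}$, and at each such $\tau_{j}^{v}$ the conjugated operator has maximal eigenvalue exactly $1$ at $s=\delta_{\alpha}+iv$, so each contributes its own square-root singular term; this is why the paper's $c_{v}(s)$ in (\ref{c^{v}_{j}}) is a finite sum over $\{\tau_{j}^{v}\}_{j=1}^{m}$, whereas your formula retains only the $t=0$ contribution. Moreover $\mathrm{supp}(\hat{g})$ may meet $S^{v}_{\alpha}\setminus S^{v}_{\alpha,0}$, where the spectral radius equals $1$ but the maximal eigenvalue is $e^{ib}\neq 1$: there is no spectral gap at such points, and the harmlessness of their contribution must come from the eigenvalue argument of Theorem \ref{Analytic extension 1} (namely $1-e^{P(s,\xi,t)}\neq 0$), not from a gap. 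What you have implicitly assumed is Condition B of Section 4, a strictly stronger hypothesis than Condition A.

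The final asymptotic step is also only asserted. Writing $1-\lambda(s-iv,t)=Aw+\tfrac{V}{2}t^{2}+O(|w|^{2}+|w||t|+|t|^{3})$ and quoting $\int dt/(a+bt^{2})$ ignores the odd, purely imaginary cubic term in $t$ (the third $t$-derivative of the pressure is generically nonzero), and the error terms multiply an expression that is itself singular as $w\to 0$; showing that the corrections really assemble into a factor analytic near $\delta_{\alpha}+iv$, and that the remaining pieces are continuous up to the line $\mathrm{Re}(s)=\delta_{\alpha}$, is precisely where the work lies. The paper does this by introducing the implicit solution $s_{\alpha}(t)$ of $P(s_{\alpha}(t),\xi_{\alpha},t)=0$, isolating the factor $1/(s-iv-s_{\alpha}(t))$, establishing the parity and derivative properties of Lemma \ref{Properties of s(t)}, applying the even Morse lemma to $\mathrm{Re}(s_{\alpha}(t))-\delta_{\alpha}$, and then following the Katsuda--Sunada analysis; some argument of this kind is needed before your ``essentially explicit Gaussian-type integral'' can be taken as a proof.
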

More precise statements will be given in Theorems \ref{Analytic extension 1} and \ref{Mega Prop}.
We remark that in Theorem \ref{babyprop} we do not require Condition A. As we will see, Condition A gives a useful added structure to the solution set of the cohomological equation mentioned above. When $v\phi$ fails to satisfy the cohomological equation the solution set is empty and the added structure is irrelevant. When $v=0$ our cohomological equation will always be satisfied and Theorem \ref{baby thm} will apply.

\subsection{Spectral properties of $L_{\delta_{\alpha}(\xi)+iv,\xi,t}$}
Here we prove several technical results for the operator $L_{\delta_{\alpha}(\xi)+iv,\xi,t}.$ These results will be used in the proofs of Theorems \ref{babyprop} and \ref{baby thm}.

As a consequence of Theorem \ref{Complex Ruelle theorem}, $L_{\delta_{\alpha}(\xi)+iv,\xi,t}$ has spectral radius equal to one if and only if $v\phi+t(\psi-\alpha\phi)$ is cohomologous to a function of the form $\Phi + b$, where $\Phi\in C(\Lambda,2\pi\mathbb{Z})$ and $b\in \mathbb{R}$. In this case, $L_{\delta_{\alpha}(\xi)+iv,\xi,t}$ has a simple maximal eigenvalue $e^{ib}$. In what follows we fix $\Phi$ as an element of $C(\Lambda,2\pi\mathbb{Z})$. It is clear that for a given point $\delta_{\alpha}(\xi) +iv,$ the question of whether $\zeta_{\alpha,g}(\delta_{\alpha}(\xi)+iv,\xi)$ converges depends on the cohomology properties of $v\phi$. To analyse this in more detail we introduce the following sets
\begin{description}
\item $S_{\alpha}=\left\{t\in\mathbb{R}: t(\psi-\alpha\phi)  \textrm{ is cohomologous to some } \Phi + b,  \textrm{ where }  b\in\mathbb{R}\right\}$
\item $S_{\alpha,0}=\left\{t\in\mathbb{R}: t(\psi-\alpha\phi)  \textrm{ is cohomologous to some } \Phi\right\}$
\item $S_{\alpha}^{v}=\left\{t\in\mathbb{R}:v\phi+ t(\psi-\alpha\phi)  \textrm{ is cohomologous to some } \Phi + b, \textrm{ where } b\in\mathbb{R}\right\}$
\item $S_{\alpha,0}^{v}=\left\{t\in\mathbb{R}:v\phi+ t(\psi-\alpha\phi)  \textrm{ is cohomologous to some } \Phi\right\}.$
\end{description}
We remark that $S_{\alpha}$ is an additive group and $S_{\alpha,0}$ is a subgroup of $S_{\alpha}.$ For a given $v$ suppose $v\phi+\tau(\psi-\alpha\phi)$ is cohomologous to a function of the form $\Phi+b,$ for some $\tau\in\mathbb{R}$, then $S_{\alpha}^{v}=\tau+S_{\alpha}$. Similarly, suppose $v\phi+\tau'(\psi-\alpha\phi)$ is cohomologous to a function $\Phi,$ then $S_{\alpha,0}^{v}=\tau' + S_{\alpha,0}$. Understanding the structure of the groups $S_{\alpha}$ and $S_{\alpha,0}$ will allow us to determine how $\zeta_{\alpha,g}$ behaves around points where $\rho(L_{\delta_{\alpha}(\xi)+iv,\xi,t})=1,$ for some $t\in\mathrm{supp}(\hat{g}).$ The following technical results determine some added structure for the groups $S_{\alpha}$ and $S_{\alpha,0}$. This added structure is necessary for our later analysis.
\begin{prop}
\label{closed groups}
$S_{\alpha}$ is closed.
\end{prop}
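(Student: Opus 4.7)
The plan is to realise $S_\alpha$ as a level set of a spectral-radius function and invoke upper semi-continuity of the spectral radius. Set $L_t:=L_{\delta\phi+it(\psi-\alpha\phi)}$ acting on $C^{\beta}(\Lambda,\mathbb{C})$. Since $P(\delta\phi)=0$ by (\ref{Bowen's equation}), Theorem \ref{Complex Ruelle theorem} applied with $u=\delta\phi$ and $v=t(\psi-\alpha\phi)$ gives $\rho(L_t)\leq 1$, with equality if and only if $L_t$ admits an eigenvalue of modulus one. Unpacking the attendant conjugacy $L_t=e^{ib}ML_{\delta\phi}M^{-1}$, in which $M$ is multiplication by some unimodular continuous function $e^{ih}$, and comparing potentials pointwise, this eigenvalue condition is precisely $t(\psi-\alpha\phi)-b-(h\circ T-h)\in 2\pi\mathbb{Z}$ on $\Lambda$, that is, $t\in S_\alpha$.

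Granted this characterisation, closedness of $S_\alpha$ is essentially automatic. The assignment $t\mapsto L_t$ is continuous from $\mathbb{R}$ into the Banach algebra $B(C^{\beta}(\Lambda,\mathbb{C}))$: the potential is affine in $t$, and $\psi\mapsto L_\psi$ is locally Lipschitz in operator norm via the mean value theorem applied to the exponential uniformly on $\Lambda$. The spectral radius $\rho(L)=\inf_{n\geq 1}\|L^n\|^{1/n}$ is an infimum of continuous functions of $L$ and is therefore upper semi-continuous. Hence if $t_n\in S_\alpha$ with $t_n\to t$, one obtains
$$1=\limsup_{n\to\infty}\rho(L_{t_n})\leq \rho(L_t)\leq 1,$$
so $\rho(L_t)=1$ and $t\in S_\alpha$, proving the proposition.

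The one step that actually requires thought is the equivalence between the spectral condition $\rho(L_t)=1$ and the cohomological definition of $S_\alpha$, i.e.\ extracting the continuous real $h$ and the integer-valued continuous $n$ with $t(\psi-\alpha\phi)=b+h\circ T-h+2\pi n$ from the multiplication operator $M$ supplied by Theorem \ref{Complex Ruelle theorem}. Should this direct extraction prove cumbersome, a backup argument proceeds via periodic orbits: on every $p$-periodic orbit the coboundary terms cancel in the cohomological equation for $t_n$, leaving $t_n\cdot S_p(\psi-\alpha\phi)(x)-pb_n\in 2\pi\mathbb{Z}$; after normalising each $b_n$ modulo $2\pi$ and passing to a convergent subsequence, the discreteness of $2\pi\mathbb{Z}$ propagates the relation to the limit $t$, and a Livsic-type theorem for $2\pi\mathbb{Z}$-valued cocycles then reconstructs the full coboundary equation for $t$.
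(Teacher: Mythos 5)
Your proposal is correct and follows essentially the same route as the paper: both identify $S_\alpha$ with $\{t:\rho(L_{\delta\phi+it(\psi-\alpha\phi)})=1\}$ via Theorem \ref{Complex Ruelle theorem} together with $P(\delta\phi)=0$, and then show the spectral radius cannot increase in the limit. The paper simply unfolds your appeal to upper semi-continuity of $\rho$ by hand, arguing by contradiction with the spectral radius formula $\rho(L)=\lim_n\|L^n\|^{1/n}$ and continuity of $t\mapsto\|L_t^N\|$, which is the same idea in a different packaging.
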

\begin{proof}
Let $t^{*}$ be a limit point of $S_{\alpha}$, it suffices to show that $\rho(L_{\delta_{\alpha}(0),0,t^{*}})=1.$ Suppose $\rho(L_{\delta_{\alpha}(0),0,t^{*}})<1$, by the spectral radius formula $\lim_{n\to\infty}\|L^{n}_{\delta_{\alpha}(0),0,t^{*}}\|^{1/n}<1$. Take $N$ sufficiently large such that $\|L^{N}_{\delta_{\alpha}(0),0,t^{*}}\|^{1/N}<1,$ by continuity there exists a neighbourhood $U$ of $t^{*}$ such that $\|L^{N}_{\delta_{\alpha}(0),0,t}\|^{1/N}<1,$ for all $t\in U$. Since $t^{*}$ is a limit point of $S_{\alpha}$ there exists $t'\in S_{\alpha}\cap U$. Applying the spectral radius formula to $L_{\delta_{\alpha}(0),0,t'}$ we deduce that $\rho(L_{\delta_{\alpha}(0),0,t'})<1,$ a contradiction.
\end{proof}
\begin{prop}
\label{Lattice S}
Suppose $T$ and $\mu_{\psi}$ satisfy Condition $A$. Then for all $\alpha\in \mathcal{R}$ $S_{\alpha,0}$ is a discrete subgroup of $\mathbb{R}$.
\end{prop}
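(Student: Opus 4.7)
The plan is to exhibit a single constant $c\neq 0$ such that every element of $S_{\alpha,0}$ is forced to lie in the discrete subgroup $(2\pi/c)\mathbb{Z}$ of $\mathbb{R}$, which gives discreteness immediately. The constant $c$ will be a Birkhoff sum of $\psi-\alpha\phi$ along a single periodic orbit, extracted from Condition A via Livsic's theorem.

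First I would record the trivial observation that $S_{\alpha,0}$ is a subgroup of $(\mathbb{R},+)$: if $t_i(\psi-\alpha\phi)=\Phi_i+h_i\circ T-h_i$ with $\Phi_i\in C(\Lambda,2\pi\mathbb{Z})$ and $h_i\in C(\Lambda,\mathbb{R})$ for $i=1,2$, then
$$(t_1\pm t_2)(\psi-\alpha\phi)=(\Phi_1\pm\Phi_2)+(h_1\pm h_2)\circ T-(h_1\pm h_2),$$
and $\Phi_1\pm\Phi_2\in C(\Lambda,2\pi\mathbb{Z})$. The key structural input is that coboundaries telescope along periodic orbits: if $T^nx=x$ and $f=g+h\circ T-h$, then $f^n(x)=g^n(x)$. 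Applied to the defining equation of $S_{\alpha,0}$, this yields that for every $t\in S_{\alpha,0}$ and every periodic $x$ of period $n$,
$$t\bigl(\psi^n(x)-\alpha\phi^n(x)\bigr)=\Phi_t^n(x)\in 2\pi\mathbb{Z}.$$

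The heart of the proof is then to produce one periodic orbit on which $\psi-\alpha\phi$ has nonzero Birkhoff sum. Since $\alpha\in\mathcal{R}$ and Condition A holds, $\psi-\alpha\phi$ is a Hölder continuous function that is not cohomologous to zero, so Livsic's theorem (transported from the shift via the coding $\pi$ of Section~1.3) furnishes a periodic point $x_0$ of some period $n_0$ with $c:=\psi^{n_0}(x_0)-\alpha\phi^{n_0}(x_0)\neq 0$. Combining this with the previous display, every $t\in S_{\alpha,0}$ satisfies $tc\in 2\pi\mathbb{Z}$, so $S_{\alpha,0}\subseteq(2\pi/c)\mathbb{Z}$ and is therefore discrete.

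The main step on which everything depends is Livsic's theorem, which is what converts the qualitative non-coboundary hypothesis of Condition A into a concrete nonzero periodic Birkhoff sum; the remainder of the argument is a direct evaluation of the cohomological equation on that single orbit and requires no appeal to the closedness statement of Proposition \ref{closed groups}, nor any uniformity as $t$ varies through $S_{\alpha,0}$.
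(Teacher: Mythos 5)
Your argument is correct. It relies on the same two ingredients as the paper's proof --- the telescoping of coboundaries over periodic orbits (so that $t\in S_{\alpha,0}$ forces $t\,(\psi-\alpha\phi)^{n}(x)\in 2\pi\mathbb{Z}$ for every $x$ with $T^{n}x=x$) and the Livsic theorem applied to $\psi-\alpha\phi$ --- but the logical organisation is genuinely different. The paper invokes the dichotomy that an additive subgroup of $\mathbb{R}$ is either dense or discrete, assumes density, takes $t_{1}\in S_{\alpha,0}$ close to a given $t$, and uses a continuity argument (with the admissible closeness depending on the fixed periodic orbit) to force $\Phi_{1}^{n}(x)=\Phi^{n}(x)$ and hence $(\psi-\alpha\phi)^{n}(x)=0$ on every periodic orbit; Livsic then contradicts Condition A. You instead apply Livsic first, in contrapositive form, to extract a single periodic orbit with nonzero Birkhoff sum $c$, and conclude directly that $S_{\alpha,0}\subseteq(2\pi/c)\mathbb{Z}$. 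Your route dispenses with the dense-or-discrete dichotomy and the perturbation/continuity step entirely, and it buys slightly more: an explicit lattice containing $S_{\alpha,0}$, i.e.\ a quantitative lower bound on its generator in terms of one periodic Birkhoff sum. What the paper's route buys in exchange is only that it needs Livsic at the very end, after the periodic data have been shown to vanish; both proofs use the same (nontrivial) direction of Livsic, so nothing is saved there. Your verification that $S_{\alpha,0}$ is a subgroup, which the paper only remarks on, is also correct and worth keeping since the proposition asserts it.
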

\begin{proof}
Fix $\alpha\in \mathcal{R}$. By standard results on additive subgroups of $\mathbb{R}$, $S_{\alpha,0}$ is dense in $\mathbb{R}$ or discrete. It suffices to show that we can obtain a contradiction if we assume $S_{\alpha,0}$ is dense. Take $t\in S_{\alpha,0}$, then for $x$ such that $T^{n}(x)=x$ we have $t(\psi-\alpha\phi)^{n}(x)= \Phi^{n}(x)$. By the density of $S_{\alpha,0}$ we can take $t_{1}$ arbitrarily close to $t$ such that $t_{1}(\psi-\alpha\phi)^{n}(x)= \Phi_{1}^{n}(x),$ for some $\Phi_{1}\in C(\Lambda,2\pi\mathbb{Z})$. By a continuity argument, for $t_{1}$ sufficiently close to $t,$ $\Phi_{1}^{n}(x)=\Phi^{n}(x),$ and therefore $(t_{1}-t)(\psi-\alpha\phi)^{n}(x)=0.$ This implies that $(\psi-\alpha\phi)^{n}(x)=0$ for all $x$ such that $T^{n}(x)=x.$ A theorem of Livsic \cite{Li} states that two function $f$ and $g$ are cohomologous if and only if $f^{n}(x)=g^{n}(x)$ for all $x$ such that $T^{n}(x)=x$ . Therefore $\psi-\alpha\phi$ is cohomologous to zero which contradicts Condition A.
\end{proof}

\subsection{The analytic domain of $\zeta_{\alpha,g}$}
We would like to determine the behaviour of $\zeta_{\alpha,g}(s,\xi)$ along the line $\textrm{Re}(s)=\delta_{\alpha}(\xi)$. More specifically, for which points does this function converge and where does it have an analytic extension. The following theorem characterises those points where we can have an analytic extension.

\begin{thm}
\label{Analytic extension 1}
Suppose $\mathrm{supp}(\hat{g})\cap S_{\alpha,0}^{v}= \emptyset$, then $\zeta_{\alpha,g}(s,\xi)$ converges at $s=\delta_{\alpha}(\xi)+iv$ and $\zeta_{\alpha,g}(s,\xi)$ extends to an analytic function on a neighbourhood of $\delta_{\alpha}(\xi) +iv$. 
\end{thm}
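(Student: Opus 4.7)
The strategy is to exploit the integral representation (\ref{Approx rewrite 2}) and show that, under the hypothesis, $I-L_{s,\xi,t}$ is uniformly invertible for $(s,t)$ in a complex neighbourhood of $\{\delta_\alpha(\xi)+iv\}\times\mathrm{supp}(\hat g)$. The driving observation is that $1\in\mathrm{spec}(L_{\delta_\alpha(\xi)+iv,\xi,t})$ precisely when $t\in S_{\alpha,0}^v$, which is exactly what the hypothesis rules out on $\mathrm{supp}(\hat g)$.

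First I would verify that $1\notin\mathrm{spec}(L_{\delta_\alpha(\xi)+iv,\xi,t})$ for every $t\in\mathrm{supp}(\hat g)$. By Theorem \ref{Complex Ruelle theorem}, $\rho(L_{\delta_\alpha(\xi)+iv,\xi,t})\le e^{P(\delta_\alpha(\xi),\xi,0)}=1$; if this is strict the conclusion is immediate, and otherwise the same theorem together with the discussion preceding the definition of the sets $S_\alpha^v$ and $S_{\alpha,0}^v$ yields a unique simple peripheral eigenvalue $e^{ib}$ with $v\phi+t(\psi-\alpha\phi)$ cohomologous to some $\Phi+b$. Since $t\notin S_{\alpha,0}^v$ we have $b\notin 2\pi\mathbb{Z}$ and hence $e^{ib}\neq 1$. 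Proposition \ref{Banach algebra} and compactness of $\mathrm{supp}(\hat g)$ then yield an open neighbourhood $U\subset\mathbb{C}$ of $\delta_\alpha(\xi)+iv$ on which $(I-L_{s,\xi,t})^{-1}$ exists, is holomorphic in $s$, jointly continuous in $(s,t)$, and uniformly norm-bounded for $(s,t)\in U\times\mathrm{supp}(\hat g)$. Differentiation under the integral sign then shows that
$$\tilde\zeta(s,\xi):=\frac{1}{2\pi}\int \hat g(-t)\,L_{s,\xi,t}(I-L_{s,\xi,t})^{-1}1(y)\,dt$$
is holomorphic on $U$, and for $s\in U\cap\{\mathrm{Re}(s)>\delta_\alpha(\xi)\}$ the Neumann series $(I-L_{s,\xi,t})^{-1}=\sum_{n\ge 0}L_{s,\xi,t}^n$ converges in operator norm, so $\tilde\zeta(s,\xi)$ agrees with $\zeta_{\alpha,g}(s,\xi)$ by (\ref{Approx rewrite 2}); thus $\tilde\zeta$ is the required analytic extension.

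For convergence of the defining series at $s=\delta_\alpha(\xi)+iv$, I would apply the telescoping identity $\sum_{n=1}^N L^n = L(I-L)^{-1}-L^{N+1}(I-L)^{-1}$ to write the $N$th partial sum as $\tilde\zeta(\delta_\alpha(\xi)+iv,\xi)-R_N$ with $R_N=\frac{1}{2\pi}\int \hat g(-t)\,L_{s,\xi,t}^{N+1}(I-L_{s,\xi,t})^{-1}1(y)\,dt$, and then show $R_N\to 0$. On the open set $\{t\in\mathrm{supp}(\hat g):\rho(L_{s,\xi,t})<1\}$ the integrand decays exponentially. On the relatively closed subset where the spectral radius equals $1$, I would decompose $L^{N+1}=\lambda(t)^{N+1}\pi_{s,\xi,t}+Q_{s,\xi,t}^{N+1}$ with $\rho(Q)<1$ and kill the surviving oscillatory term $\int\hat g(-t)\lambda(t)^{N+1}h(t)\,dt$ by a Riemann--Lebesgue/integration-by-parts argument exploiting smoothness of $\hat g$ and analyticity (by perturbation theory) of $\lambda(t)=e^{ib(t)}$ with $b(t)\notin 2\pi\mathbb{Z}$. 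I expect this last step to be the main obstacle: Condition A is \emph{not} assumed in this theorem, so the set $\{\rho=1\}\cap\mathrm{supp}(\hat g)$ need not be discrete, one cannot simply localise to finitely many bad points, and non-constancy of $b(t)$ on each connected component must be extracted with care. The holomorphic extension itself is by contrast a routine consequence of resolvent analyticity plus compactness.
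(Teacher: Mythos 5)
Your treatment of the analytic extension is correct, and it takes a genuinely lighter route than the paper: you only need that $1$ lies in the resolvent set of $L_{\delta_{\alpha}(\xi)+iv,\xi,t}$ for every $t\in\mathrm{supp}(\hat{g})$ (which you derive correctly from Theorem \ref{Complex Ruelle theorem} together with the hypothesis $\mathrm{supp}(\hat{g})\cap S_{\alpha,0}^{v}=\emptyset$), then openness of invertibility in the Banach algebra, compactness of $\mathrm{supp}(\hat{g})$, and identification with the series (\ref{Approx rewrite 2}) on $\mathrm{Re}(s)>\delta_{\alpha}(\xi)$ via the Neumann series. (Proposition \ref{Banach algebra} is not really the right citation here -- you are not tracking a simple eigenvalue, only the resolvent at $1$ -- but the elementary Neumann-series openness argument does what you want.) The paper instead splits into cases according to $S_{\alpha}^{v}\cap\mathrm{supp}(\hat{g})$ and uses the eigenprojection decomposition, resumming the leading part into $\hat{g}(-t)e^{P(s,\xi,t)}\pi_{s,\xi,t}(1)(y)/(1-e^{P(s,\xi,t)})$; for the extension alone your argument is a clean substitute.

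The gap is in the convergence claim, and it sits exactly where you flagged trouble -- but the obstacle you fear does not arise, because you have overlooked structural information already established in Section 3.1 and available \emph{without} Condition A. By Proposition \ref{closed groups}, $S_{\alpha}$ is a closed additive subgroup of $\mathbb{R}$, and $S_{\alpha}^{v}$ is either empty or a translate $\tau+S_{\alpha}$; a closed subgroup of $\mathbb{R}$ is $\{0\}$, $\kappa\mathbb{Z}$ or $\mathbb{R}$, so the bad set $\{t\in\mathrm{supp}(\hat{g}):\rho(L_{\delta_{\alpha}(\xi)+iv,\xi,t})=1\}=S_{\alpha}^{v}\cap\mathrm{supp}(\hat{g})$ is either finite or all of $\mathrm{supp}(\hat{g})$ -- never the awkward non-discrete-but-proper closed set driving your worry. (It is only the discreteness of $S_{\alpha,0}$, Proposition \ref{Lattice S}, that needs Condition A.) In the finite case one localises to small intervals about the finitely many bad points, the complement contributing geometrically convergent terms since $\rho<1$ there uniformly; in the case where the whole support is bad there is a globally defined simple maximal eigenvalue $e^{P(s,\xi,t)}$, and one chooses a neighbourhood of $\delta_{\alpha}(\xi)+iv$ on which $e^{P(s,\xi,t)}\neq 1$ for all $t\in\mathrm{supp}(\hat{g})$, so the eigen-part is controlled by the uniform bound on $|1-e^{P(s,\xi,t)}|^{-1}$ while the complementary part has spectral radius uniformly less than one. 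No Riemann--Lebesgue, stationary phase, or non-constancy argument for $b(t)$ is needed anywhere, so the step you expected to be the main obstacle disappears once the group structure is used. If you insist on convergence of the defining series literally at $s=\delta_{\alpha}(\xi)+iv$ (rather than of the resummed expression, which is what the paper's manipulation directly controls, the resummation being performed for $\mathrm{Re}(s)>\delta_{\alpha}(\xi)$), your telescoping identity is the honest device, but it should be run after the above reduction, i.e.\ only on finitely many bad points or on a globally defined eigenvalue family with $e^{ib(t)}\neq 1$, not on an arbitrary closed bad set.
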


\begin{proof}
By our previous remarks and standard results on additive subgroups of $\mathbb{R},$ $S_{\alpha}^{v}$ is discrete or dense in $\mathbb{R}$. By Proposition \ref{closed groups}, $S_{\alpha}^{v}\cap\mathrm{supp}(\hat{g})$ is equal to $\mathrm{supp}(\hat{g})$ or some finite set.
\\

\noindent \textbf{Case 1.} \textit{$S_{\alpha}^{v}\cap\mathrm{supp}(\hat{g})=\mathrm{supp}(\hat{g}).$}

In this case we can assert the existence of a simple maximal eigenvalue for $L_{\delta_{\alpha}(\xi)+iv,\xi,t}$ for all $t\in\mathrm{supp}(\hat{g})$. By Proposition \ref{Banach algebra} we can assert the existence of a neighbourhood $N_{\delta_{\alpha}(\xi)+iv}$ of $\delta_{\alpha}(\xi)+iv$ such that, for all $s\in N_{\delta_{\alpha}(\xi)+iv}$ and $t\in\mathrm{supp}(\hat{g})$, $L_{s,\xi,t}$ has a simple maximal eigenvalue $e^{P(s,\xi,t)}$ and $e^{P(s,\xi,t)}\neq 1$. 
We rewrite $(\ref{Approx rewrite 2})$ as
\begin{align*}
\zeta_{\alpha,g}(s,\xi)=&\frac{1}{2\pi}\int_{-\infty}^{\infty} \hat{g}(-t) \sum_{n=1}^{\infty}L^{n}_{s,\xi,t}\pi_{s,\xi,t}(1)(y)\, dt\\
&+ \frac{1}{2\pi}\int_{-\infty}^{\infty} \hat{g}(-t) \sum_{n=1}^{\infty}L^{n}_{s,\xi,t}\pi^{c}_{s,\xi,t}(1)(y)\, dt.
\end{align*}
We may assume that $\pi_{s,\xi,t}$ and $\pi^{c}_{s,\xi,t}$ are analytic on $N_{\delta_{\alpha}(\xi)+iv}$ for all $t\in\mathrm{supp}(\hat{g}).$ For $s\in \mathbb{C}$ such that $\textrm{Re}(s)>\delta_{\alpha}(\xi)$ we can rewrite the first of these integrals as $$\frac{1}{2\pi}\int_{-\infty}^{\infty} \frac{\hat{g}(-t) e^{P(s,\xi,t)}\pi_{s,\xi,t}(1)(y)}{1-e^{P(s,\xi,t)}}\, dt.$$ For all $s\in N_{\delta_{\alpha}(\xi)+iv}$ and $t\in\mathrm{supp}(\hat{g}),$ $e^{P(s,\xi,t)}\neq 1$, it follows by a simple argument that this expression converges and is analytic on $N_{\delta_{\alpha}(\xi)+iv}$.

To conclude this case it remains to show that our second integral has an analytic extension on a neighbourhood of $\delta_{\alpha}(\xi)+iv$. We can construct a neighbourhood of $\delta_{\alpha}(\xi)+iv$ such that for all $s$ in this neighbourhood and $t\in\mathrm{supp}(\hat{g}),$ $\rho(L_{s,\xi,t}|_{\pi^{c}_{s,\xi,t}})<1$. This property ensures convergence, by considering smaller neighbourhoods we can prove analyticity.
\\

\noindent \textbf{Case 2.} \textit{$S_{\alpha}^{v}\cap\mathrm{supp}(\hat{g})$ is finite.} 

Let $S_{\alpha}^{v}\cap\mathrm{supp}(\hat{g})=\{\tau_{j}\}_{j=1}^{m}$, we can rewrite $(\ref{Approx rewrite 2})$ as
\begin{align*}
\zeta_{\alpha,g}(s,\xi)&=\frac{1}{2\pi}\sum_{j=1}^{m} \int_{\tau_{j}-\epsilon}^{\tau_{j}+\epsilon} \hat{g}(-t)\sum_{n=1}^{\infty}L^{n}_{s,\xi,t}(1)(y)\, dt\\
&+ \frac{1}{2\pi}\int_{(-\infty,\infty)\setminus \bigcup_{j=1}^{m}(\tau_{j}-\epsilon,\tau_{j}+\epsilon)} \hat{g}(-t) \sum_{n=1}^{\infty}L^{n}_{s,\xi,t}(1)(y)\, dt,
\end{align*} for $\epsilon$ some small positive constant. The analysis of the first term reduces to that given in Case 1. The existence of an analytic extension for the second term follows from the fact that $\rho(L_{\delta_{\alpha}(\xi)+iv,\xi,t})<1$ for all $t\in \mathrm{supp}(\hat{g})\setminus \bigcup_{j=1}^{m}(\tau_{j}-\epsilon,\tau_{j}+\epsilon)$.
\end{proof}

\subsection{Divergence results for $\zeta_{\alpha,g}$}
We now consider what happens in the case where $T$ and $\mu_{\psi}$ satisfy Condition A, $0\in \mathrm{int}(\mathcal{I_{\alpha}})$ and $\mathrm{supp}(\hat{g})\cap S_{\alpha,0}^{v}\neq \emptyset.$ Our main result is the following Theorem. 

\begin{thm}
\label{Mega Prop}
Suppose $T$ and $\mu_{\psi}$ satisfy Condition $A$, $0\in \mathrm{int}(\mathcal{I}_{\alpha})$ and $\mathrm{supp}(\hat{g})\cap S_{\alpha,0}^{v}\neq \emptyset.$ Then there exists a unique value of $\xi$ depending on $\alpha,$ such that for $s$ in a neighbourhood of $\delta_{\alpha}+iv$ with $\mathrm{Re}(s)>\delta_{\alpha}$ we can rewrite $\zeta_{\alpha,g}(s,\xi)$ as  $$\zeta_{\alpha,g}(s,\xi)=\frac{c_{v}(s)}{(s-iv-\delta_{\alpha})^{1/2}} +f(s),$$ where $c_{v}(s)$ is analytic on a neighbourhood of $\delta_{\alpha}+iv$ and $f(s)$ is continuous on $\{s:\mathrm{Re}(s)\geq \delta_{\alpha}\}$.
\end{thm}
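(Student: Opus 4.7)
The plan is to choose $\xi$ as the unique critical point of the convex function $\xi\mapsto\delta_\alpha(\xi)$, split the Ruelle-operator representation $(\ref{Approx rewrite 2})$ of $\zeta_{\alpha,g}(s,\xi)$ into a finite sum of local integrals around the points of $\mathrm{supp}(\hat g)\cap S^v_{\alpha,0}$ plus a benign remainder, and extract a $(s-iv-\delta_\alpha)^{-1/2}$ singularity from each local piece by Taylor-expanding the pressure. For the choice of $\xi$: Proposition \ref{Pressure derivatives} applied to the function $\xi\mapsto P(\delta\phi+\xi(\psi-\alpha\phi))$ makes it strictly convex, because the degenerate case would force $\psi-\alpha\phi$ to be cohomologous to a constant and hence $\mathcal{I}_\alpha$ to be a singleton, contradicting $0\in\mathrm{int}(\mathcal{I}_\alpha)$. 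Implicit differentiation of $P(\delta_\alpha(\xi)\phi+\xi(\psi-\alpha\phi))=0$ then shows $\delta_\alpha'(\xi)=0$ if and only if $\int(\psi-\alpha\phi)\,d\mu_{\xi,\alpha}=0$, and $0\in\mathrm{int}(\mathcal{I}_\alpha)$ produces a unique such $\xi^*$. The multifractal formalism identifies $\delta_\alpha(\xi^*)=\dim_H E_\alpha=\delta_\alpha$.

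For the decomposition: by Proposition \ref{Lattice S}, $S_{\alpha,0}$ is discrete and so is its coset $S^v_{\alpha,0}$, hence $\mathrm{supp}(\hat g)\cap S^v_{\alpha,0}=\{\tau_1,\ldots,\tau_m\}$ is finite and nonempty. Pick $\epsilon_0>0$ so that the intervals $U_j=(\tau_j-\epsilon_0,\tau_j+\epsilon_0)$ are disjoint and contain no other points of $S^v_\alpha$, and write $\zeta_{\alpha,g}(s,\xi^*)=\sum_j I_j(s)+R(s)$, where $I_j$ is the integral in $(\ref{Approx rewrite 2})$ restricted to $U_j$. On the complement, the spectral radius $\rho(L_{\delta_\alpha+iv,\xi^*,t})$ is $<1$ except at finitely many points of $S^v_\alpha\setminus S^v_{\alpha,0}$, where the maximal eigenvalue equals some $e^{ib}\neq 1$ (because $b\notin 2\pi\mathbb{Z}$), so $1-e^{P(s,\xi^*,t)}$ remains bounded away from $0$. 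The spectral decomposition $L^n=e^{nP}\pi+L^n\pi^c$ from Cases 1--2 of the proof of Theorem \ref{Analytic extension 1} then shows $R(s)$ extends analytically across $\delta_\alpha+iv$, hence is continuous on $\{\mathrm{Re}(s)\geq\delta_\alpha\}$ near that point.

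For each $I_j$, set $\sigma=s-\delta_\alpha-iv$ and $\epsilon=t-\tau_j$. Since $v\phi+\tau_j(\psi-\alpha\phi)=\Phi+h\circ T-h$ with $\Phi\in C(\Lambda,2\pi\mathbb{Z})$, the kernel function $s\phi+\xi^*(\psi-\alpha\phi)+it(\psi-\alpha\phi)$ equals $\delta_\alpha\phi+\xi^*(\psi-\alpha\phi)+\sigma\phi+i\epsilon(\psi-\alpha\phi)$ modulo an imaginary coboundary and the $2\pi i\mathbb{Z}$-valued term $i\Phi$, neither of which affects $L$; thus $P(s,\xi^*,t)=P(\delta_\alpha\phi+\xi^*(\psi-\alpha\phi)+\sigma\phi+i\epsilon(\psi-\alpha\phi))$. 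Applying Proposition \ref{Pressure derivatives} (extended analytically to the complex direction $i(\psi-\alpha\phi)$) yields $\partial_\sigma P|_0=\int\phi\,d\mu_{\xi^*,\alpha}=:-C_1<0$, $\partial_\epsilon P|_0=i\int(\psi-\alpha\phi)\,d\mu_{\xi^*,\alpha}=0$, and $\partial^2_\epsilon P|_0=-\Sigma^2$, with $\Sigma^2:=\lim_n n^{-1}\int((\psi-\alpha\phi)^n)^2\,d\mu_{\xi^*,\alpha}>0$ (strictness coming again from $\psi-\alpha\phi$ not being cohomologous to a constant). Splitting $L^n_{s,\xi^*,t}=e^{nP(s,\xi^*,t)}\pi_{s,\xi^*,t}+L^n_{s,\xi^*,t}\pi^c_{s,\xi^*,t}$ and summing the geometric series reduces $I_j(s)$, modulo an analytic term, to
\begin{equation*}
\int_{-\epsilon_0}^{\epsilon_0}\frac{\hat g(-\tau_j-\epsilon)\,G_j(\sigma,\epsilon)}{C_1\sigma+\tfrac{\Sigma^2}{2}\epsilon^2+O(|\sigma|^2+|\sigma\epsilon|+|\epsilon|^3)}\,d\epsilon,
\end{equation*}
with $G_j$ analytic and $G_j(0,0)\neq 0$. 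The elementary evaluation $\int_{-\epsilon_0}^{\epsilon_0}(C_1\sigma+\tfrac{\Sigma^2}{2}\epsilon^2)^{-1}\,d\epsilon=\pi(C_1\sigma\Sigma^2/2)^{-1/2}+O(1)$ (principal branch on $\mathrm{Re}(\sigma)>0$, the $O(1)$ continuous up to $\sigma=0$) extracts the $\sigma^{-1/2}$ singularity; the contribution of $G_j(\sigma,\epsilon)-G_j(0,0)$ and of the higher-order pressure terms is continuous by the same kind of argument. Summing over $j$ and absorbing the analytic pieces into $f(s)$ gives the claimed decomposition, with $c_v(s)$ analytic near $\delta_\alpha+iv$.

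The main obstacle I anticipate is the continuity statement for $f(s)$: one must show that the higher-order corrections in the pressure expansion and in $G_j$ produce only a remainder continuous on the closed half-plane $\{\mathrm{Re}(s)\geq\delta_\alpha\}$ near $\delta_\alpha+iv$, not merely holomorphic on the open half-plane. This hinges on a uniform bound $|C_1\sigma+\tfrac{\Sigma^2}{2}\epsilon^2+O(\cdots)|\gtrsim|\sigma|+\epsilon^2$ on $\{\mathrm{Re}(\sigma)\geq 0\}$ for small $\sigma$ and $\epsilon$, which permits a dominated-convergence argument but requires careful uniform estimates on the analytic remainders.
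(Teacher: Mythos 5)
Your outline follows the paper's own route almost step for step: the unique $\xi$ is the critical point of $\delta_{\alpha}(\xi)$, equivalently the parameter with $\int(\psi-\alpha\phi)\,d\mu_{\xi,\alpha}=0$ (Lemma \ref{Integrates to zero}, with $\delta_{\alpha}(\xi_{\alpha})=\delta_{\alpha}$ quoted from the multifractal formalism); the localisation of $(\ref{Approx rewrite 2})$ to the finite set $\mathrm{supp}(\hat{g})\cap S^{v}_{\alpha,0}$ via Proposition \ref{Lattice S} is Proposition \ref{First reduction}; and the cohomological shift $P(s,\xi_{\alpha},t)=P(s-iv,\xi_{\alpha},t-\tau_{j}^{v})$ is Proposition \ref{Second reduction}. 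You differ only at the end: the paper introduces the implicit root $s_{\alpha}(t)$ of $P(s_{\alpha}(t),\xi_{\alpha},t)=0$, proves the parity and derivative properties of Lemma \ref{Properties of s(t)}, and then invokes the even Morse lemma and the analysis of \cite{KS}, while you Taylor-expand $1-e^{P}$ in $(\sigma,\epsilon)$ and evaluate the model integral directly; your leading coefficient agrees with the paper's $\omega(\psi,\phi,\alpha)$ expression, so the singular part is right.

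The gap is in the continuity of $f$, and the mechanism you propose for it is insufficient. A uniform lower bound $|1-e^{P}|\gtrsim|\sigma|+\epsilon^{2}$ on $\mathrm{Re}(\sigma)\geq 0$ does hold, but dominated convergence against this bound does not control the first-order terms: the numerator ($\hat{g}(-\tau_{j}^{v}-\epsilon)$, $h$, the projection) has a nonvanishing linear term in $\epsilon$, and the denominator has purely imaginary cross and cubic terms (neither $\partial_{\sigma}\partial_{\epsilon}P$ nor $\partial^{3}_{\epsilon}P$ need vanish at $(0,0)$); estimating these in absolute value gives
$$\int_{-\epsilon_{0}}^{\epsilon_{0}}\frac{|\epsilon|}{|\sigma|+\epsilon^{2}}\,d\epsilon\asymp\log\frac{1}{|\sigma|},$$
so the crude argument yields only a remainder that is $o(|\sigma|^{-1/2})$, not one continuous up to $\mathrm{Re}(s)=\delta_{\alpha}$, which is what the theorem asserts. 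These contributions are in fact harmless, but only because of cancellation: the leading kernel $(C_{1}\sigma+\tfrac{\Sigma^{2}}{2}\epsilon^{2})^{-1}$ is even in $\epsilon$ and the interval of integration is symmetric, so the odd parts of the expansion integrate to zero and the surviving even corrections are uniformly integrable. This parity bookkeeping is precisely what the paper's route packages: Lemma \ref{Properties of s(t)} ($\mathrm{Re}(s_{\alpha})$ even, $\mathrm{Im}(s_{\alpha})$ odd, $s_{\alpha}'(0)=0$, $s_{\alpha}''(0)<0$ using Condition A and Proposition \ref{Pressure derivatives}) together with the even form of the Morse lemma, $\mathrm{Re}(s_{\alpha}(t))=\delta_{\alpha}-\theta^{2}$ with $\theta(-t)=-\theta(t)$, after which the Katsuda--Sunada analysis delivers the continuous remainder. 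So either import that symmetry argument explicitly, or push your expansion of numerator and denominator to the order at which the odd terms visibly cancel; as written, ``continuous by the same kind of argument'' does not follow from the lower bound alone.
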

The following lemma defines the unique $\xi$ mentioned in Theorem \ref{Mega Prop}.

\begin{lem} 
\label{Integrates to zero}
Suppose $T$ and $\mu_{\psi}$ satisfy Condition $A$ and $\alpha$ is such that $0\in\mathrm{int}(\mathcal{I}_{\alpha}).$ Then there exists a unique $\xi\in \mathbb{R}$ depending on $\alpha,$ such that $\int \psi-\alpha\phi \, d\mu_{\xi,\alpha}=0.$ 
\end{lem}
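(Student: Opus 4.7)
The plan is to set $F(\xi):=\int(\psi-\alpha\phi)\,d\mu_{\xi,\alpha}$, an analytic function of $\xi$, and to show that $F$ has exactly one zero on $\mathbb{R}$. Differentiating the defining relation $P(\delta_{\alpha}(\xi)\phi+\xi(\psi-\alpha\phi))=0$ in $\xi$ and applying Proposition \ref{Pressure derivatives} gives
$$\delta_{\alpha}'(\xi)\int\phi\,d\mu_{\xi,\alpha}+F(\xi)=0.$$
Since $\phi=-\log|T'|<0$ we have $\int\phi\,d\mu_{\xi,\alpha}<0$, so the zeros of $F$ are precisely the critical points of $\delta_{\alpha}$. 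This recasts the lemma as the assertion that $\delta_{\alpha}$ has exactly one critical point.

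For existence I would show $\delta_{\alpha}$ is coercive. Since $0\in\mathrm{int}(\mathcal{I}_{\alpha})$, choose $T$-invariant probability measures $\mu_{+}$ and $\mu_{-}$ with $\int(\psi-\alpha\phi)\,d\mu_{+}>0$ and $\int(\psi-\alpha\phi)\,d\mu_{-}<0$. The variational principle $(\ref{Variational principle})$ applied to each $\mu_{\pm}$ yields
$$0\ge h_{\mu_{\pm}}(T)+\delta_{\alpha}(\xi)\int\phi\,d\mu_{\pm}+\xi\int(\psi-\alpha\phi)\,d\mu_{\pm},$$
and dividing by $-\int\phi\,d\mu_{\pm}>0$ produces linear lower bounds for $\delta_{\alpha}(\xi)$ whose slopes in $\xi$ are positive for $\mu_{+}$ and negative for $\mu_{-}$. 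Hence $\delta_{\alpha}(\xi)\to\infty$ as $\xi\to\pm\infty$, and the continuous function $\delta_{\alpha}$ attains a minimum at some $\xi^{*}\in\mathbb{R}$, at which $F(\xi^{*})=0$.

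For uniqueness I would compute $F'$ at any zero $\xi^{*}$. Writing $F(\xi)=\partial_{\eta}P(\delta_{\alpha}(\xi)\phi+\eta(\psi-\alpha\phi))\big|_{\eta=\xi}$ and differentiating, the term involving $\delta_{\alpha}'$ vanishes at $\xi^{*}$; Proposition \ref{Pressure derivatives} then applies with base potential $\delta_{\alpha}(\xi^{*})\phi+\xi^{*}(\psi-\alpha\phi)$ and perturbation $\psi-\alpha\phi$ (whose $\mu_{\xi^{*},\alpha}$-integral is $F(\xi^{*})=0$), yielding
$$F'(\xi^{*})=\lim_{n\to\infty}\frac{1}{n}\int\bigl((\psi-\alpha\phi)^{n}\bigr)^{2}\,d\mu_{\xi^{*},\alpha}\ge 0,$$
with equality if and only if $\psi-\alpha\phi$ is cohomologous to a constant. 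I would then exclude this using Condition A: any such constant $c$ would satisfy $c=\int(\psi-\alpha\phi)\,d\mu_{\xi^{*},\alpha}=F(\xi^{*})=0$, forcing $\psi-\alpha\phi$ to be cohomologous to zero; since $0\in\mathcal{I}_{\alpha}$ implies $\alpha\in\mathcal{R}$, this contradicts Condition A. Hence $F'(\xi^{*})>0$, so $F$ crosses zero strictly from below at each of its zeros; two distinct zeros would force $F$ to re-enter zero from above at the second one, contradicting $F'>0$ there.

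The main obstacle is the uniqueness step: Proposition \ref{Pressure derivatives} requires the perturbation to have vanishing mean against the base Gibbs measure, which holds by construction at a zero of $F$, and the delicate point is excluding cohomology to a nonzero constant, which is precisely where the interplay of Condition A with $0\in\mathcal{I}_{\alpha}$ becomes essential.
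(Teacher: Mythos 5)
Your proposal is correct and follows essentially the same route as the paper: differentiate $P(\delta_{\alpha}(\xi)\phi+\xi(\psi-\alpha\phi))=0$ to identify zeros of $\int\psi-\alpha\phi\,d\mu_{\xi,\alpha}$ with critical points of $\delta_{\alpha}$, then use the variational principle and measures $\mu_{\pm}$ to get $\delta_{\alpha}(\xi)\to\infty$ as $|\xi|\to\infty$ for existence. Your uniqueness step, via Proposition \ref{Pressure derivatives} and Condition A giving strict positivity of the second pressure derivative at any critical point, is precisely the ``convexity argument'' the paper invokes without detail, so it is a welcome elaboration rather than a different method.
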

\begin{proof}
Differentiating $P(\delta_{\alpha}(\xi)\phi+\xi(\psi-\alpha\phi))=0$ with respect to $\xi$ yields 
$$\delta_{\alpha}'(\xi)\int \phi \, d\mu_{\xi,\alpha} + \int \psi-\alpha\phi\, d\mu_{\xi,\alpha}=0.$$
For all $\mu,$ $\int \phi\, d\mu< 0$, therefore $\int \psi-\alpha\phi \, d\mu_{\xi,\alpha}=0$ if and only if $\delta_{\alpha}'(\xi)=0$. It suffices to show that $\delta_{\alpha}(\xi)$ has a unique critical point. By $(\ref{Variational principle})$ $$\frac{-\xi \int \psi-\alpha\phi \,d\mu -h_{\mu}(T)}{\int \phi \, d\mu}\leq \delta_{\alpha}(\xi),$$ holds for $T$-invariant probability measure $\mu$. Clearly there exists $\mu_{-}$ and $\mu_{+}$ such that $\int \psi-\alpha\phi \,d\mu_{-}<0$ and $\int \psi-\alpha\phi \,d\mu_{+}>0,$ hence $\delta_{\alpha}(\xi)\to \infty$ as $|\xi|\to \infty.$ By the Mean Value Theorem there exists $\xi$ such that $\delta_{\alpha}'(\xi)=0$. The uniqueness of $\xi$ follows by a convexity argument.
\end{proof}
In what follows we shall denote this unique value of $\xi$ by $\xi_{\alpha}$. It follows from Theorem 21.1 in \cite{Pe} that when $T$ and $\mu_{\psi}$ satisfy Condition A and $0\in\mathrm{int}(\mathcal{I}_{\alpha}),$  $$\delta_{\alpha}(\xi_{\alpha})=\delta_{\alpha} \textrm{ and } P(\delta_{\alpha}\phi+\xi_{\alpha}(\psi-\alpha\phi))=0.$$ This statement will be important in our later analysis. It makes clear the dependence of the domain of convergence of $\zeta_{\alpha,g}(\cdot,\xi_{\alpha})$ on the multifractal properties of $\Lambda$ and $\mu_{\psi}$.

The proof of Theorem \ref{Mega Prop} will consist of reducing $\zeta_{\alpha,g}(s,\xi_{\alpha})$ to the sum of two functions, one that is well behaved in a neighbourhood of $\delta_{\alpha}+iv$ and one that will dictate the behaviour of $\zeta_{\alpha,g}(s,\xi_{\alpha})$ at $\delta_{\alpha}+iv$ yet can be analysed explicitly. These functions are defined in Propositions \ref{First reduction} and \ref{Second reduction}.

\begin{prop}
\label{First reduction}
Suppose $T$ and $\mu_{\psi}$ satisfy Condition $A$, $0\in \mathrm{int}(\mathcal{I}_{\alpha})$, $\mathrm{supp}(\hat{g})\cap S_{\alpha,0}^{v}\neq \emptyset$ and $\epsilon$ is some small positive constant. Then for $s$ in a neighbourhood of $\delta_{\alpha}+iv$ with $\mathrm{Re}(s)>\delta_{\alpha}$ we can write $\zeta_{\alpha,g}(s,\xi_{\alpha})$ as $$\zeta_{\alpha,g}(s,\xi_{\alpha})=\frac{1}{2\pi}\sum_{j=1}^{m}\int_{\tau_{j}^{v}-\epsilon}^{\tau_{j}^{v}+\epsilon} \frac{\hat{g}(-t)e^{P(s,\xi_{\alpha},t)}\pi_{s,\xi_{\alpha},t}(1)(y)}{1-e^{P(s,\xi_{\alpha},t)}}\, dt +f(s),$$
where $f$ is analytic on a neighbourhood of $\delta_{\alpha}+iv.$
\end{prop}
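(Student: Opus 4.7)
The plan is to start from $(\ref{Approx rewrite 2})$ and isolate the portions of the $t$-integral where $L_{\delta_\alpha+iv,\xi_\alpha,t}$ has leading eigenvalue exactly equal to $1$. Under Condition A, Proposition~\ref{Lattice S} says that $S_{\alpha,0}$ is discrete, hence so is its translate $S_{\alpha,0}^v$; combined with compactness of $\mathrm{supp}(\hat{g})$, this forces $\mathrm{supp}(\hat{g})\cap S_{\alpha,0}^v$ to be a finite set $\{\tau_1^v,\dots,\tau_m^v\}$. I would choose $\epsilon>0$ small enough that the intervals $(\tau_j^v-\epsilon,\tau_j^v+\epsilon)$ are pairwise disjoint, and split the integral in $(\ref{Approx rewrite 2})$ into a piece over $U=\bigcup_{j=1}^m(\tau_j^v-\epsilon,\tau_j^v+\epsilon)$ and a piece over the compact complement $K=\mathrm{supp}(\hat{g})\setminus U$.

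Near each $\tau_j^v$, Theorem~\ref{Complex Ruelle theorem} tells us that $L_{\delta_\alpha+iv,\xi_\alpha,\tau_j^v}$ has a simple maximal eigenvalue equal to $1$ with the rest of the spectrum strictly inside the unit disk. Proposition~\ref{Banach algebra} then supplies a neighbourhood $N$ of $\delta_\alpha+iv$ and, after shrinking $\epsilon$ if necessary, a simple maximal eigenvalue $e^{P(s,\xi_\alpha,t)}$ and analytic spectral projections $\pi_{s,\xi_\alpha,t},\pi^c_{s,\xi_\alpha,t}$ for $L_{s,\xi_\alpha,t}$, all depending jointly analytically on $(s,t)\in N\times(\tau_j^v-\epsilon,\tau_j^v+\epsilon)$, with the spectral radius of $L_{s,\xi_\alpha,t}$ on the range of $\pi^c_{s,\xi_\alpha,t}$ uniformly bounded by some $\theta<1$. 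For $\mathrm{Re}(s)>\delta_\alpha$ the expansion
\[
\sum_{n=1}^{\infty}L^n_{s,\xi_\alpha,t}1(y)=\frac{e^{P(s,\xi_\alpha,t)}\pi_{s,\xi_\alpha,t}(1)(y)}{1-e^{P(s,\xi_\alpha,t)}}+\sum_{n=1}^{\infty}L^n_{s,\xi_\alpha,t}\pi^c_{s,\xi_\alpha,t}(1)(y)
\]
is then valid, and integrating the first term against $\hat{g}(-t)/(2\pi)$ yields exactly the $j$-th summand in the statement. The second series converges uniformly and is jointly analytic on $N\times(\tau_j^v-\epsilon,\tau_j^v+\epsilon)$, so its contribution after integration is analytic on $N$ and will be absorbed into $f(s)$.

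For the integral over $K$, at every $t\in K$ one of the following holds: either $t\notin S_\alpha^v$ and $\rho(L_{\delta_\alpha+iv,\xi_\alpha,t})<1$, or $t\in S_\alpha^v\setminus S_{\alpha,0}^v$ and $L_{\delta_\alpha+iv,\xi_\alpha,t}$ has a simple maximal eigenvalue $e^{ib_t}$ with $b_t\not\equiv 0\pmod{2\pi}$. In either case Proposition~\ref{Banach algebra} yields an analytic extension of $\sum_{n=1}^{\infty}L^n_{s,\xi_\alpha,t}1(y)$ in $s$ on a neighbourhood of $(\delta_\alpha+iv,t)$ (using the same spectral splitting in the second sub-case, with denominator bounded away from $0$). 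Covering $K$ by finitely many such neighbourhoods and invoking Morera's theorem produces an analytic function of $s$ on a common neighbourhood of $\delta_\alpha+iv$ after integration, and this contribution also joins $f(s)$. The main obstacle is the sub-case $t\in S_\alpha^v\setminus S_{\alpha,0}^v$, since nothing in Condition A forces this set to be discrete: what rescues the argument is that the obstruction to analytic continuation is the vanishing of $1-e^{P(s,\xi_\alpha,t)}$, which only happens when $t\in S_{\alpha,0}^v$, so compactness of $K$ upgrades the pointwise bound $|1-e^{P(\delta_\alpha+iv,\xi_\alpha,t)}|>0$ into a uniform one.
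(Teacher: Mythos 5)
Your argument is correct and follows essentially the same route as the paper: discreteness of $S_{\alpha,0}$ (Proposition \ref{Lattice S}) gives the finite set $\{\tau_{j}^{v}\}$, perturbation theory (Proposition \ref{Banach algebra}) provides the simple maximal eigenvalue and spectral projections near each $\tau_{j}^{v}$ so that the principal part produces the stated integrals, and the complementary-projection and off-interval contributions are shown analytic exactly as in the proof of Theorem \ref{Analytic extension 1}. Your explicit treatment of the two sub-cases on $K$ (spectral radius strictly below one versus a unimodular eigenvalue $e^{ib_{t}}\neq 1$) together with a compactness covering is just an unpacking of the paper's appeal to that earlier proof, so there is nothing substantively different to flag.
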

\begin{proof}
By Proposition $\ref{Lattice S}$ $\mathrm{supp}(\hat{g})\cap S_{\alpha,0}^{v}$ is some finite set $\{\tau_{j}^{v}\}_{j=1}^{m}$. By Proposition $\ref{Banach algebra},$ there exists $\epsilon>0$ such that $L_{s,\xi_{\alpha},t}$ has a simple maximal eigenvalue for all $s$ in a neighbourhood of $\delta_{\alpha} +iv$ and $t\in \bigcup_{j=1}^{m}(\tau_{j}^{v}-\epsilon,\tau_{j}^{v}+\epsilon)$.
Taking projections we can rewrite $(\ref{Approx rewrite 2})$ as
\begin{align*}
\zeta_{\alpha,g}(s,\xi_{\alpha})&=\frac{1}{2\pi}\sum_{j=1}^{m}\int_{\tau_{j}^{v}-\epsilon}^{\tau_{j}^{v}+\epsilon}\frac{\hat{g}(-t)e^{P(s,\xi_{\alpha},t)}\pi_{s,\xi_{\alpha},t}(1)(y)}{1-e^{P(s,\xi_{\alpha},t)}}\\
&+\frac{1}{2\pi}\sum_{j=1}^{m}\int_{\tau_{j}^{v}-\epsilon}^{\tau_{j}^{v}+\epsilon}\hat{g}(-t)\sum_{n=1}^{\infty} L^{n}_{s,\xi_{\alpha},t}\pi_{s,\xi_{\alpha},t}^{c}(1)(y)\, dt\\
&+\frac{1}{2\pi}\int_{(-\infty,\infty)\setminus \bigcup_{j=1}^{m}(\tau_{j}^{v}-\epsilon,\tau_{j}^{v}+\epsilon)} \hat{g}(-t) \sum_{n=1}^{\infty}L^{n}_{s,\xi_{\alpha},t}(1)(y)\, dt,
\end{align*}for $s$ with $\mathrm{Re}(s)>\delta_{\alpha}$. Repeating the proof of Theorem $\ref{Analytic extension 1}$ we can deduce the analyticity of our latter terms.
\end{proof}

By Proposition \ref{Pressure derivatives} $$\frac{\partial P(\delta_{\alpha},\xi_{\alpha},0)}{\partial s}=\int \phi \, d\mu_{\xi_{\alpha},\alpha}\neq 0,$$ applying the Implicit Function Theorem we can assert the existence of an analytic complex valued function $s_{\alpha}(z)$ defined locally around $0$ such that, $P(s_{\alpha}(z),\xi_{\alpha},z)=0$ and $s_{\alpha}(0)=\delta_{\alpha}$. 

\begin{prop}
\label{Second reduction}
Suppose $T$ and $\mu_{\psi}$ satisfy Condition $A$, $0\in \mathrm{int}(\mathcal{I}_{\alpha})$, $\mathrm{supp}(\hat{g})\cap S_{\alpha,0}^{v}\neq \emptyset$ and $\epsilon$ is some small positive constant. Then for $s$ in a neighbourhood of $\delta_{\alpha}+iv$ with $\mathrm{Re}(s)>\delta_{\alpha}$ we can write $\zeta_{\alpha,g}(s,\xi_{\alpha})$ as $$\zeta_{\alpha,g}(s,\xi_{\alpha})=\frac{1}{2\pi}\sum_{j=1}^{m}\int_{-\epsilon}^{\epsilon}\frac{\hat{g}(-t-\tau_{j}^{v})e^{P(s-iv,\xi_{\alpha},t)}\pi_{s,\xi_{\alpha},t+\tau_{j}^{v}}(1)(y)h(t)}{s-iv-s_{\alpha}(t)}\, dt+f(s),$$ where $f$ is analytic in a neighbourhood of $\delta_{\alpha}+iv$ and $h$ is analytic on a neighbourhood of zero.

\end{prop}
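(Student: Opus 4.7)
The plan is to start from the representation in Proposition \ref{First reduction} and perform, on each of the $m$ integrals, a translation, a conjugation coming from the cohomology data defining $S_{\alpha,0}^v$, and an explicit factorisation of $e^{P}/(1-e^P)$ into a simple-pole part and an analytic remainder.

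First, in the $j$-th integral I would substitute $t\mapsto t+\tau_j^v$, which recentres the interval of integration at $0$ and changes $\hat g(-t)$ into $\hat g(-t-\tau_j^v)$, while the integrand now involves $L_{s,\xi_\alpha,t+\tau_j^v}$. Because $\tau_j^v\in S_{\alpha,0}^v$, we can write
\[
v\phi+\tau_j^v(\psi-\alpha\phi)=\Phi_j+w_j\circ T-w_j,
\]
with $\Phi_j\in C(\Lambda,2\pi\mathbb{Z})$ and $w_j\in C^\beta(\Lambda,\mathbb{R})$ (Hölder regularity of $w_j$ being automatic by Livsic). Substituting this into the exponent of $L_{s,\xi_\alpha,t+\tau_j^v}$ and using $e^{i\Phi_j}\equiv 1$, a direct computation from Definition \ref{Fractal Ruelle Operator} gives the operator identity
\[
L_{s,\xi_\alpha,t+\tau_j^v}=M_{e^{iw_j}}\,L_{s-iv,\xi_\alpha,t}\,M_{e^{-iw_j}},
\]
valid for all $(s,t)$, where $M_\varphi$ denotes multiplication by $\varphi$. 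In particular the spectra coincide, so $P(s,\xi_\alpha,t+\tau_j^v)=P(s-iv,\xi_\alpha,t)$ on a neighbourhood of $(\delta_\alpha+iv,0)$ (the branch of logarithm being fixed by continuity from $P(\delta_\alpha,\xi_\alpha,0)=0$), while $\pi_{s,\xi_\alpha,t+\tau_j^v}=M_{e^{iw_j}}\pi_{s-iv,\xi_\alpha,t}M_{e^{-iw_j}}$, which is jointly analytic in $(s,t)$ by Proposition \ref{Banach algebra}.

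Next, I would extract the pole in the denominator using $s_\alpha(z)$. The function $s_\alpha$ is analytic by the Implicit Function Theorem, since $\partial P/\partial s$ at $(\delta_\alpha,\xi_\alpha,0)$ equals $\int\phi\,d\mu_{\xi_\alpha,\alpha}\neq 0$ (Proposition \ref{Pressure derivatives}). Hence $P(s-iv,\xi_\alpha,t)=(s-iv-s_\alpha(t))Q(s,t)$ with $Q$ analytic and non-zero on a neighbourhood of $(\delta_\alpha+iv,0)$, and since $u\mapsto(1-e^u)/u$ is analytic with value $-1$ at $u=0$, it follows that $1-e^{P(s-iv,\xi_\alpha,t)}=-(s-iv-s_\alpha(t))\tilde h(s,t)$ with $\tilde h$ analytic and non-vanishing on the same neighbourhood.

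Finally, to make the numerator coefficient depend only on $t$, I would split
\[
-\frac{1}{\tilde h(s,t)}=h(t)+(s-iv-s_\alpha(t))\,r(s,t),
\]
with $h(t):=-1/\tilde h(iv+s_\alpha(t),t)$ analytic near $t=0$ and $r$ jointly analytic. The first term reproduces the claimed integrand $\hat g(-t-\tau_j^v)e^{P(s-iv,\xi_\alpha,t)}\pi_{s,\xi_\alpha,t+\tau_j^v}(1)(y)h(t)/(s-iv-s_\alpha(t))$; the second term cancels the denominator $s-iv-s_\alpha(t)$, so after multiplying by $\hat g$, $e^{P(s-iv,\xi_\alpha,t)}$ and $\pi_{s,\xi_\alpha,t+\tau_j^v}(1)(y)$ and integrating over the compact interval $[-\epsilon,\epsilon]$ it yields a function analytic in $s$ that is absorbed into $f$. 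The most delicate step, which I expect to require the greatest care, is establishing joint analyticity of $\pi_{s,\xi_\alpha,t+\tau_j^v}(1)(y)$ in $(s,t)$ through the conjugation above, together with the boundedness of $M_{e^{\pm iw_j}}$ on $C^\beta(\Lambda,\mathbb{C})$; everything else is bookkeeping around the Implicit Function Theorem and the Taylor expansion of $u\mapsto e^u/(1-e^u)$ at $u=0$.
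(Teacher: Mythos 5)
Your proposal is correct and follows essentially the same route as the paper: starting from Proposition \ref{First reduction}, you use the coboundary relation defining $\tau_{j}^{v}$ to replace $e^{P(s,\xi_{\alpha},t)}$ by $e^{P(s-iv,\xi_{\alpha},t-\tau_{j}^{v})}$ after recentring the integrals, and then extract the simple zero of $1-e^{P(s-iv,\xi_{\alpha},t)}$ at $s=iv+s_{\alpha}(t)$ to split off the $h(t)/(s-iv-s_{\alpha}(t))$ part plus an analytic remainder. Your explicit conjugation identity $L_{s,\xi_{\alpha},t+\tau_{j}^{v}}=M_{e^{iw_{j}}}L_{s-iv,\xi_{\alpha},t}M_{e^{-iw_{j}}}$ and the factorisation of $1-e^{P}$ via $(1-e^{u})/u$ are just a more explicit rendering of the paper's Taylor-expansion manipulation (indeed your $h(t)=-1/\partial_{s}P(s_{\alpha}(t),\xi_{\alpha},t)$ carries the sign more carefully, which is immaterial here since the statement only requires $h$ to be analytic near zero).
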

\begin{proof}
By Proposition \ref{First reduction} it is sufficient to express 
\begin{equation}
\label{Second reduction first equation}
\frac{1}{2\pi}\sum_{j=1}^{m}\int_{\tau_{j}^{v}-\epsilon}^{\tau_{j}^{v}+\epsilon} \frac{\hat{g}(-t)e^{P(s,\xi_{\alpha},t)}\pi_{s,\xi_{\alpha},t}(1)(y)}{1-e^{P(s,\xi_{\alpha},t)}}\, dt
\end{equation} in the above form. We know that $v\phi+\tau_{j}^{v}(\psi-\alpha\phi)$ is cohomologous to a function valued in $2\pi \mathbb{Z}$. The spectrum of the Ruelle operator is invariant under addition of a coboundary, therefore for $s$ in a neighbourhood of $\delta_{\alpha}+iv$ and $t$ in a neighbourhood of $\tau_{j}^{v},$ $e^{P(s,\xi_{\alpha},t)}=e^{P(s-iv,\xi_{\alpha},t-\tau_{j}^{v})}$. Substituting this into (\ref{Second reduction first equation}) and applying a simple change of coordinates we obtain
\begin{equation}
\label{Change of coordinates}
\frac{1}{2\pi}\sum_{j=1}^{m}\int_{-\epsilon}^{\epsilon}\frac{\hat{g}(-t-\tau_{j}^{v})e^{P(s-iv,\xi_{\alpha},t)}\pi_{s,\xi_{\alpha},t+\tau_{j}^{v}}(1)(y)}{1-e^{P(s-iv,\xi_{\alpha},t)}}\, dt.
\end{equation}

For the rest of the proof we let $e^{P(s,\xi_{\alpha},t)}=\lambda(s,t)$. Fixing $t$ and treating $1-\lambda(s-iv,t)$ as a function of $s$ we take the Taylor series around the point $s_{\alpha}(t)$ to obtain
\begin{align*}
1-\lambda(s-iv,t)&= 1-\lambda(s_{\alpha}(t),t) - \frac{\partial \lambda(s_{\alpha}(t),t)}{\partial s}(s-iv-s_{\alpha}(t))\\& - \frac{1}{2} \frac{\partial^{2} \lambda(s_{\alpha}(t),t)}{\partial s^{2}}(s-iv-s_{\alpha}(t))^{2}+\textrm{Higher order terms}.
\end{align*}
We rewrite this as $$ 1-\lambda(s-iv,t)=-\frac{\partial \lambda(s_{\alpha}(t),t)}{\partial s}(s-iv-s_{\alpha}(t))-Z(s,t)(s-iv-s_{\alpha}(t)),$$ where $$ Z(s,t)=\Big(\frac{1}{2} \frac{\partial^{2} \lambda(s_{\alpha}(t),t)}{\partial s^{2}}(s-iv-s_{\alpha}(t))+ \textrm{Higher order terms}\Big).$$ By Proposition \ref{Pressure derivatives}, $\frac{\partial \lambda(s_{\alpha}(0),0)}{\partial s}=\int \phi \, d\mu_{\xi_{\alpha},\alpha}\neq 0,$ by continuity there exists a complex neighbourhood $U$ of $0$ such that $\frac{\partial \lambda(s_{\alpha}(z),z)}{\partial s}\neq 0,$ for all $z\in U$. It follows that
\begin{equation}
\label{h(t)}
h(z)=\Big({\frac{\partial \lambda(s_{\alpha}(z),z)}{\partial s}}\Big)^{-1},
\end{equation}is well defined and analytic on $U$. Without loss of generality we can assume that $\epsilon$ is sufficiently small such that $h(t)$ is well defined for all $t\in[-\epsilon,\epsilon]$.  We observe that
\begin{align*}
\frac{1}{1-\lambda(s-iv,t)}&=\frac{h(t)}{(s-iv-s_{\alpha}(t))(1-Z(s,t)h(t))}\\
&=\frac{h(t)}{(s-iv-s_{\alpha}(t))(1-Z(s,t)h(t))}-\frac{h(t)}{(s-iv-s_{\alpha}(t))}\\
&+\frac{h(t)}{(s-iv-s_{\alpha}(t))}\\
&=\frac{Z(s,t)h(t)^{2}}{(s-iv-s_{\alpha}(t))(1-Z(s,t)h(t))} + \frac{h(t)}{(s-iv-s_{\alpha}(t))}.
\end{align*}
Substituting this into $(\ref{Change of coordinates})$ we obtain 
\begin{align*}
&\frac{1}{2\pi}\sum_{j=1}^{m}\int_{-\epsilon}^{\epsilon}\frac{\hat{g}(-t-\tau_{j}^{v})e^{P(s-iv,\xi_{\alpha},t)}\pi_{s,\xi_{\alpha},t+\tau_{j}^{v}}(1)(y)h(t)}{s-iv-s_{\alpha}(t)}\, dt\\
+&\frac{1}{2\pi}\sum_{j=1}^{m}\int_{-\epsilon}^{\epsilon}\frac{\hat{g}(-t-\tau_{j}^{v})e^{P(s-iv,\xi_{\alpha},t)}\pi_{s,\xi_{\alpha},t+\tau_{j}^{v}}(1)(y)Z(s,t)h(t)^{2}}{(s-iv-s_{\alpha}(t))(1-Z(s,t)h(t))}\, dt.
\end{align*} The latter integral is analytic on a neighbourhood of $\delta_{\alpha}+iv$ so we can conclude our result.
\end{proof}

As stated before Proposition \ref{First reduction}, to prove Theorem \ref{Mega Prop} we shall express $\zeta_{\alpha,g}(s,\xi_{\alpha})$ as the sum of two functions. One that is well behaved in a neighbourhood of $\delta_{\alpha}+iv$ and one that will dictate the behaviour of $\zeta_{\alpha,g}(s,\xi_{\alpha})$ at $\delta_{\alpha}+iv$ yet can be analysed explicitly. The function
\begin{equation}
\label{Final reduction}
\frac{1}{2\pi}\sum_{j=1}^{m}\int_{-\epsilon}^{\epsilon}\frac{\hat{g}(-t-\tau_{j}^{v})e^{P(s-iv,\xi_{\alpha},t)}\pi_{s,\xi_{\alpha},t+\tau_{j}^{v}}(1)(y)h(t)}{s-iv-s_{\alpha}(t)}\, dt
\end{equation}
as we shall see, can be analysed explicitly.
\subsubsection{Properties of $s_{\alpha}(t)$}
Before continuing our analysis of $(\ref{Final reduction})$ we have to prove several technical results for the function $s_{\alpha}(t)$.

\begin{lem}
\label{Properties of s(t)}
\begin{enumerate}
\item $\mathrm{Re}(s_{\alpha}(t))$ is an even function and $\mathrm{Im}(s_{\alpha}(t))$ is an odd function.
\item $\frac{\partial\mathrm{Re}(s_{\alpha}(0))}{\partial t}=0$.
\item $\frac{\partial^{2}\mathrm{Im}(s_{\alpha}(0))}{\partial t^{2}}=0.$
\item $\frac{\partial\mathrm{Im}(s_{\alpha}(0))}{\partial t}=0.$
\item $\frac{\partial^{2}\mathrm{Re}(s_{\alpha}(0))}{\partial^{2} t}<0$.
\end{enumerate}
\end{lem}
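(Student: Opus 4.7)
The plan is to combine a complex--conjugation symmetry of the pressure with implicit differentiation of the defining equation $P(s_{\alpha}(t),\xi_{\alpha},t)=0$. Since $\phi$ and $\psi-\alpha\phi$ are real--valued, inspecting the Ruelle operator gives $\overline{L_{s,\xi_{\alpha},t}w}=L_{\bar{s},\xi_{\alpha},-t}\bar{w}$, so whenever the maximal eigenvalue is defined one has $\overline{P(s,\xi_{\alpha},t)}=P(\bar{s},\xi_{\alpha},-t)$. Conjugating the identity $P(s_{\alpha}(t),\xi_{\alpha},t)=0$ for real $t$ yields $P(\overline{s_{\alpha}(t)},\xi_{\alpha},-t)=0$, and the local uniqueness coming from the implicit function theorem forces $\overline{s_{\alpha}(t)}=s_{\alpha}(-t)$. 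This is (1). Claims (2) and (3) are then immediate: the real part of $s_{\alpha}$ is even, so its first derivative at $0$ vanishes; the imaginary part is odd, so its second derivative at $0$ vanishes.

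Claim (4) requires implicit differentiation. Differentiating $P(s_{\alpha}(t),\xi_{\alpha},t)=0$ once at $t=0$, Proposition \ref{Pressure derivatives} identifies the $s$--derivative as $\int\phi\,d\mu_{\xi_{\alpha},\alpha}<0$ and the $t$--derivative as $i\int(\psi-\alpha\phi)\,d\mu_{\xi_{\alpha},\alpha}$, which vanishes by the defining property of $\xi_{\alpha}$ in Lemma \ref{Integrates to zero}. The chain rule then gives $s_{\alpha}'(0)=0$; taking imaginary parts yields (4).

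The main work is claim (5). Differentiating the defining relation twice and using $s_{\alpha}'(0)=0$,
$$\frac{\partial P}{\partial s}(\delta_{\alpha},\xi_{\alpha},0)\,s_{\alpha}''(0)+\frac{\partial^{2}P}{\partial t^{2}}(\delta_{\alpha},\xi_{\alpha},0)=0.$$
The first coefficient is already known to be negative. For the second, I would apply Proposition \ref{Pressure derivatives} to the perturbation direction $\varphi=i(\psi-\alpha\phi)$, whose integral against $\mu_{\xi_{\alpha},\alpha}$ vanishes; this yields
$$\frac{\partial^{2}P}{\partial t^{2}}(\delta_{\alpha},\xi_{\alpha},0)=-\lim_{n\to\infty}\frac{1}{n}\int((\psi-\alpha\phi)^{n})^{2}\,d\mu_{\xi_{\alpha},\alpha}\leq 0,$$
with equality if and only if $\psi-\alpha\phi$ is cohomologous to a constant. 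Ruling out this degeneracy is the main obstacle, and it is exactly where Condition A enters: if $\psi-\alpha\phi=c+h\circ T-h$, integrating against the $T$--invariant measure $\mu_{\xi_{\alpha},\alpha}$ forces $c=\int(\psi-\alpha\phi)\,d\mu_{\xi_{\alpha},\alpha}=0$, so $\psi-\alpha\phi$ would be cohomologous to zero, contradicting Condition A. Therefore the second derivative is strictly negative, and dividing two negatives gives $s_{\alpha}''(0)<0$. Because $\mathrm{Im}(s_{\alpha})$ is odd its second derivative at $0$ vanishes, so $s_{\alpha}''(0)$ is real and equals $\tfrac{\partial^{2}\mathrm{Re}(s_{\alpha}(0))}{\partial t^{2}}$, proving (5).
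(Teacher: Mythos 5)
Your proof is correct and follows essentially the same route as the paper: the conjugation symmetry $\overline{s_{\alpha}(t)}=s_{\alpha}(-t)$ for parts (1)--(3) (which the paper delegates to the analysis in \cite{Sh}), then implicit differentiation of $P(s_{\alpha}(t),\xi_{\alpha},t)=0$ together with Lemma \ref{Integrates to zero}, Proposition \ref{Pressure derivatives} and Condition A for parts (4) and (5). Your extra step showing that ``cohomologous to a constant'' would force the constant to be $0$ (since $\int\psi-\alpha\phi\,d\mu_{\xi_{\alpha},\alpha}=0$), and hence contradict Condition A, fills in a detail the paper leaves implicit.
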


\begin{proof}
Duplicating the analysis given in \cite{Sh} we can prove the first three statements. Moreover, the following equalities hold $$\textrm{Re}(s_{\alpha}(t))=\frac{1}{2}(s_{\alpha}(t)+s_{\alpha}(-t))$$ and $$\textrm{Im}(s_{\alpha}(t))=-\frac{i}{2} (s_{\alpha}(t)-s_{\alpha}(-t)).$$ We observe that $\frac{\partial \textrm{Im}(s_{\alpha}(0))}{\partial t}= -is'_{\alpha}(0)$. We have the relation $P(s_{\alpha}(t),\xi_{\alpha},t)=0$, hence by implicit differentiation
\begin{equation}
\label{Pressure derivative}
\frac{\partial P}{\partial s}\frac{\partial s_{\alpha}}{\partial t}+\frac{\partial P}{\partial t}=0.
\end{equation}
\noindent At $t=0$ $$ s'_{\alpha}(0)\int \phi \, d\mu_{\xi_{\alpha},\alpha} +i\int \psi-\alpha\phi d\mu_{\xi_{\alpha},\alpha}=0.$$ We remark that $\int \phi \, d\mu_{\xi_{\alpha},\alpha}\neq 0,$ therefore by Lemma \ref{Integrates to zero}, $s'_{\alpha}(0)=0$. To obtain an expression for $\frac{\partial^{2}\textrm{Re}(s_{\alpha}(0))}{\partial t^{2}}$ we note that $\frac{\partial^{2}\textrm{Re}(s_{\alpha}(0))}{\partial t^{2}}=s''_{\alpha}(0)$. Differentiating $(\ref{Pressure derivative})$ with respect to $t$ and recalling $s'_{\alpha}(0)=0$ we obtain $$ s''_{\alpha}(0)=\frac{-1}{\int \phi \, d\mu_{\xi_{\alpha},\alpha}} \frac{\partial^{2}P(\delta_{\alpha},\xi_{\alpha},0)}{\partial t^{2}}.$$
Recall $\int \psi-\alpha \phi \,d\mu_{\xi_{\alpha},\alpha}=0$, it follows from Proposition \ref{Pressure derivatives} that
$$ s''_{\alpha}(0)=\frac{1}{\int \phi \,d\mu_{\xi_{\alpha},\alpha}}\lim_{n \to \infty} \frac{1}{n}\int ((\psi-\alpha \phi)^{n})^{2}\,d\mu_{\xi_{\alpha},\alpha}.$$
\noindent By Condition A $\psi-\alpha\phi$ is not cohomologous to a zero, so by Proposition \ref{Pressure derivatives}, $s''_{\alpha}(0)<0$ and therefore $\frac{\partial^{2}\textrm{Re}(s_{\alpha}(0))}{\partial t^{2}}<0.$

\end{proof}

\subsubsection{Proof of Theorem $\ref{Mega Prop}$}
To prove Theorem \ref{Mega Prop} we require the following reformulation of the Morse Lemma taken from \cite{KS}.

\begin{lem}
Let $f(x)$ be a real valued even $C^{\infty}$ function in a neighbourhood of $0$ in $\mathbb{R}^{d}$. If $0$ is a non-degenerate critical point for $f$ then there exists a local coordinate system $y=(y_{1},\ldots,y_{d})$ in a neighbourhood of $0$ such that $y(-x)=-y(x)$(so that in particular, y(0)=0) and $f(y)=y_{1}^{2} +\cdots+y_{k}^{2}-y_{k+1}^{2}-\cdots-y_{d}^{2}$, for some $0\leq k \leq d$.
\end{lem}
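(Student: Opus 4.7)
The plan is to adapt the classical inductive proof of the Morse lemma, but to track at every step that the coordinate change has the form $y = L(x)\, x$ where the matrix-valued function $L(x)$ is smooth and \emph{even} in $x$; such a change is automatically odd as a map, i.e.\ $y(-x) = -y(x)$. After subtracting the constant $f(0)$ (which preserves evenness) I may assume $f(0) = 0$. Differentiating $f(x) = f(-x)$ at the origin gives $\nabla f(0) = 0$, and by hypothesis the Hessian is non-degenerate.

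First I would apply Taylor's theorem with integral remainder to write
$$f(x) = \sum_{i,j=1}^{d} h_{ij}(x)\, x_i x_j, \qquad h_{ij}(x) = \int_0^1 (1-t)\, \partial_i \partial_j f(tx)\, dt.$$
The matrix $(h_{ij})$ is symmetric, smooth, satisfies $h_{ij}(0) = \tfrac{1}{2}\partial_i \partial_j f(0)$, and is therefore non-degenerate at $0$. Since $f$ is even, each $\partial_i \partial_j f$ is even, and the change of variables $t \mapsto t$, $x \mapsto -x$ in the integral shows that each $h_{ij}$ is itself even in $x$. Thus the starting decomposition already has the ``even matrix times $x$'' form.

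Next I would diagonalise inductively. Suppose coordinates $z(x)$, odd in $x$, have been constructed so that
$$f = \varepsilon_1 z_1^2 + \cdots + \varepsilon_{r-1} z_{r-1}^2 + \sum_{i,j \geq r} H_{ij}(z)\, z_i z_j,$$
with $\varepsilon_i \in \{+1,-1\}$, each $H_{ij}$ even in $z$, and the tail matrix non-degenerate at $z = 0$. A constant orthogonal change of coordinates in $(z_r,\ldots,z_d)$ (which trivially preserves oddness) ensures $H_{rr}(0) \neq 0$. Completing the square and setting
$$z'_r = \sqrt{|H_{rr}(z)|}\left(z_r + \sum_{i > r} \frac{H_{ri}(z)}{H_{rr}(z)}\, z_i\right), \qquad z'_i = z_i \;\;(i \neq r),$$
extracts a term $\pm (z'_r)^2$ and leaves a new tail matrix $(H'_{ij})_{i,j > r}$. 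Because $H_{rr}$ and $H_{ri}$ are even in $z$ and $H_{rr}(0) \neq 0$, both $\sqrt{|H_{rr}(z)|}$ and $H_{ri}(z)/H_{rr}(z)$ are smooth and even near $0$, so $z' = M(z)\, z$ with $M$ even. Composing $z = L(x)\, x$ with $z' = M(z)\, z$ gives $z' = M(L(x) x) L(x)\, x$; since $L(x) x$ is odd in $x$ and $M$ is even, $M(L(x) x)$ is even in $x$, so the composite once again has the ``even matrix times $x$'' form. A straightforward computation shows the new tail $(H'_{ij})$ is symmetric, even, and non-degenerate, so the induction continues.

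After $d$ steps this produces coordinates $y(x)$, odd in $x$ with $y(0)=0$, in which $f(y) = y_1^2 + \cdots + y_k^2 - y_{k+1}^2 - \cdots - y_d^2$, where $k$ is the positive index of the Hessian. The main obstacle is purely the bookkeeping at each induction step, and in particular verifying that evenness of the tail matrix $(H_{ij})$ propagates through each square-completion; this is what ultimately encodes the oddness of the overall coordinate change. Everything else is a direct transcription of the classical Morse lemma argument.
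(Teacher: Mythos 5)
Your proof is correct: it is the classical inductive completing-the-square proof of the Morse lemma with the parity bookkeeping carried through each step (every change of coordinates is an even matrix applied to $x$, hence an odd map, and compositions of odd maps are odd), and the details you leave implicit -- local invertibility of each square-completion near $0$, oddness of the inverse of an odd diffeomorphism, and evenness of the Schur-complement tail after substituting the new coordinates -- are routine. Note that the paper itself gives no proof of this lemma but quotes it from Katsuda and Sunada \cite{KS}, where it is obtained in essentially this way, so your approach matches the intended one.
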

Applying this result to the function $\textrm{Re}(s_{\alpha}(t))-\delta_{\alpha}$ we obtain local coordinates $\theta$ around $0\in \mathbb{R}$ such that $\theta(-t)=- \theta(t)$ and $\textrm{Re}(s_{\alpha}(t))=\delta_{\alpha} -\theta^{2}.$ The $\theta^{2}$ occurs with negative sign since $\frac{\partial^{2} \textrm{Re}(s_{\alpha}(0)) }{\partial^{2}t} <0.$
It follows that we can rewrite $(\ref{Final reduction})$ as
\begin{equation}
\label{Theta reformulation}
\frac{1}{2\pi} \sum_{j=1}^{m}\int_{\theta([-\epsilon,\epsilon])} \frac{\hat{g}(-\theta-\tau_{j}^{v})e^{P(s-iv,\xi_{\alpha},\theta)}\pi_{s,\xi_{\alpha},\theta+\tau_{j}^{v}}(1)(y)h(\theta)|J(\theta)|}{s-iv-\delta_{\alpha} + \theta^{2} -i\textrm{Im}(s(\theta))} \,d\theta,
\end{equation}where $|J(\theta)|$ denotes the determinant of the Jacobian for this change of coordinates. 

Duplicating the analysis given in \cite{KS} we can prove Theorem \ref{Mega Prop}. The function $c_{v}(s)$ is given by
\begin{align}
\label{c^{v}_{j}}
c_{v}(s)=\sum_{j=1}^{m} \frac{\hat{g}(-\tau_{j}^{v})e^{P(s-iv,\xi_{\alpha},0)}\pi_{s,\xi_{\alpha},\tau_{j}^{v}}(1)(y)}{(-2\int\phi \, d\mu_{\xi_{\alpha},\alpha}\lim_{n \to \infty} \frac{1}{n}\int ((\psi-\alpha \phi)^{n})^{2}\,d\mu_{\xi_{\alpha},\alpha})^{1/2}}.
\end{align} For ease of exposition we let $$ \omega(\psi,\phi,\alpha)=\frac{1}{(-2\int\phi \, d\mu_{\xi_{\alpha},\alpha}\lim_{n \to \infty} \frac{1}{n}\int ((\psi-\alpha \phi)^{n})^{2}\,d\mu_{\xi_{\alpha},\alpha})^{1/2}}.$$
\subsection{Proof of Theorem \ref{Mega thm first statement}}
In the previous section we completed our proof of Theorem \ref{Mega Prop}. As stated at the start of section $3$ this Theorem will be used to prove Theorem \ref{Mega thm first statement}. We are now in a position to prove this Theorem.

\begin{proof}[Proof of Theorem \ref{Mega thm first statement}]
Let $\chi_{1}$ be the indicator function given in Proposition \ref{New zeta derivation}. By Proposition $\ref{Lattice S}$, $S_{\alpha,0}$ is a discrete subgroup of $\mathbb{R}$ and is therefore equal to $\kappa\mathbb{Z}$ for some $\kappa\in \mathbb{R}$. Let $g_{1}$ be a $C^{\infty}$ function in $L^{1}(\mathbb{R})$ such that, supp$(\hat{g_{1}})\subset[-\kappa/2,\kappa/2]$, $g_{1}(x)\geq \chi_{1}(x)$ and $\hat{g_{1}}(0)>0$. In this case supp$(\hat{g_{1}})\cap S_{\alpha,0}=\{0\}$. By Proposition \ref{New zeta derivation}, Theorem $\ref{Mega Prop}$ and $(\ref{c^{v}_{j}})$ 
$$\limsup_{\sigma\downarrow\delta_{\alpha}}\zeta_{\alpha}^{\mu_{\psi}}(\sigma)(\sigma-\delta_{\alpha})^{1/2}\leq \inf_{y\in \Lambda} \omega(\psi,\phi,\alpha) e^{|K_{\phi}(\delta_{\alpha}-\xi_{\alpha}\alpha)|}C^{|\xi_{\alpha}|}\hat{g_{1}}(0)\pi_{\delta_{\alpha},\xi_{\alpha},0}(1)(y).$$

It can be shown that $\pi_{\delta_{\alpha},\xi_{\alpha},0}(1)=h_{\delta_{\alpha}\phi+\xi(\psi-\alpha\phi)}$, where $h_{\delta_{\alpha}\phi+\xi(\psi-\alpha\phi)}$ is the normalised strictly positive eigenfunction whose existence is asserted by Proposition $\ref{Perron-Frobenius}$, the positivity of our bound follows. Let $\chi_{2}$ be the indicator function given in Proposition \ref{New zeta derivation}, to finish our proof we proceed in an analogous way except we assume that $g_{2}(x)\leq \chi_{2}(x)$. In this case we obtain $$\sup_{y\in \Lambda} \omega(\psi,\phi,\alpha) e^{-|K_{\phi}(\delta_{\alpha}-\xi_{\alpha}\alpha)|}D^{-|\xi_{\alpha}|}\hat{g_{2}}(0)\pi_{\delta_{\alpha},\xi_{\alpha},0}(1)(y)\leq \liminf_{\sigma\downarrow \delta_{\alpha}}\zeta_{\alpha}^{\mu_{\psi}}(\sigma)(\sigma-\delta_{\alpha})^{1/2}.$$
\end{proof}

\section{Exact asymptotics}
The bounds that we derive in our proof of Theorem $\ref{Mega thm first statement}$ depend on our approximating function $g$. If our only assumption on supp$(\hat{g})$ was that it was compact our bounds would have been of the form $$\inf_{y\in\Lambda}\omega(\psi,\phi,\alpha) e^{|K_{\phi}(\delta_{\alpha}-\xi_{\alpha}\alpha)|}C^{|\xi_{\alpha}|}\sum_{j=-m_{1}}^{m_{1}}\hat{g_{1}}(-\kappa j)\pi_{\delta_{\alpha},\xi_{\alpha},\kappa j}(1)(y)$$ and $$\sup_{y\in\Lambda}\omega(\psi,\phi,\alpha) e^{-|K_{\phi}(\delta_{\alpha}-\xi_{\alpha}\alpha)|}D^{-|\xi_{\alpha}|}\sum_{j=-m_{2}}^{m_{2}}\hat{g_{2}}(-\kappa j)\pi_{\delta_{\alpha},\alpha,\kappa j}(1)(y),$$ for some $m_{1}$ and $m_{2}$, where $S_{\alpha,0}=\kappa \mathbb{Z}$. For an improved choice of $g$ these bounds improve on those given in Theorem $\ref{Mega thm first statement}.$ However, they still have a dependence on $g$. We now introduce conditions that allow us to obtain bounds for the limits in Theorem $\ref{Mega thm first statement}$ that do not have any dependence on $g$. 

\begin{mydef}
\label{Condition B}
We say that $T$, $\mu_{\psi}$ and $\alpha$ satisfy Condition $B$ if for all $t\neq 0,$ $t(\psi-\alpha\phi)$ is not cohomologous to a function $\Phi$.
\end{mydef}
We remark that Condition $B$ implies that $\psi-\alpha\phi$ is not cohomologous to zero and that it is equivalent to $S_{\alpha,0}$ being the trivial group.

\begin{prop}
\label{Ruelle Asymptotics}
Let $\chi$ be the indicator function on some interval. Suppose $T$, $\mu_{\psi}$ and $\alpha$ satisfy Condition $B$ and $0\in\mathrm{int}(\mathcal{I}_{\alpha}),$ then $$\zeta_{\alpha,\chi}(\sigma,\xi_{\alpha})\sim \frac{c}{(\sigma-\delta_{\alpha})^{1/2}},$$ for $\sigma\in\mathbb{R}$ as $\sigma\downarrow \delta_{\alpha}$, and $c$ is given by the formula
$$ c=\omega(\psi,\phi,\alpha)\widehat{\chi}(0)\pi_{\delta_{\alpha},\xi_{\alpha},0}(1)(y).$$
\end{prop}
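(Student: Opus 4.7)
My plan is to derive Proposition~\ref{Ruelle Asymptotics} from Theorem~\ref{Mega Prop} (applied at $v=0$) via a sandwiching argument that replaces the non-smooth indicator $\chi$ by admissible $C^{\infty}$ test functions. First I would observe that Condition~$B$ forces $S_{\alpha,0}=\{0\}$, and since $0\in S_{\alpha,0}^{0}$ trivially we get $S_{\alpha,0}^{0}=\{0\}$. Consequently, for any admissible $g$ (i.e.\ $C^{\infty}\cap L^{1}(\mathbb{R})$ with compactly supported $\hat g$ and $\hat g(0)\neq 0$), the intersection $\mathrm{supp}(\hat g)\cap S_{\alpha,0}^{0}$ equals $\{0\}$. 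Combined with the hypothesis $0\in\mathrm{int}(\mathcal{I}_{\alpha})$, Theorem~\ref{Mega Prop} applied at $v=0$ then yields
$$\zeta_{\alpha,g}(s,\xi_{\alpha})=\frac{c_{0}(s)}{(s-\delta_{\alpha})^{1/2}}+f(s),$$
with $f$ continuous on $\{\mathrm{Re}(s)\ge\delta_{\alpha}\}$. Since only the single summand $\tau_{1}^{0}=0$ contributes and $P(\delta_{\alpha},\xi_{\alpha},0)=0$, the formula for $c_{v}$ at the end of Section~3.2 gives $c_{0}(\delta_{\alpha})=\omega(\psi,\phi,\alpha)\hat g(0)\pi_{\delta_{\alpha},\xi_{\alpha},0}(1)(y)$, so
$$\lim_{\sigma\downarrow\delta_{\alpha}}(\sigma-\delta_{\alpha})^{1/2}\zeta_{\alpha,g}(\sigma,\xi_{\alpha})=\omega(\psi,\phi,\alpha)\hat g(0)\pi_{\delta_{\alpha},\xi_{\alpha},0}(1)(y).$$

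Next, for each $\epsilon>0$ I would construct admissible real-valued functions $g_{\epsilon}^{\pm}$ with $g_{\epsilon}^{-}\le\chi\le g_{\epsilon}^{+}$ pointwise and $\hat g_{\epsilon}^{\pm}(0)=\int g_{\epsilon}^{\pm}\to\int\chi=\hat\chi(0)$ as $\epsilon\downarrow 0$. For real $\sigma$, each summand of $\zeta_{\alpha,g}(\sigma,\xi_{\alpha})$ is a positive exponential weight times $g(\psi^{n}(x)-\alpha\phi^{n}(x))$, so the pointwise sandwich passes term-by-term through the double sum:
$$\zeta_{\alpha,g_{\epsilon}^{-}}(\sigma,\xi_{\alpha})\le\zeta_{\alpha,\chi}(\sigma,\xi_{\alpha})\le\zeta_{\alpha,g_{\epsilon}^{+}}(\sigma,\xi_{\alpha}).$$
Multiplying by $(\sigma-\delta_{\alpha})^{1/2}$, invoking the limit above for $g_{\epsilon}^{\pm}$ as $\sigma\downarrow\delta_{\alpha}$, and then letting $\epsilon\downarrow 0$ squeezes both the $\liminf$ and $\limsup$ of $(\sigma-\delta_{\alpha})^{1/2}\zeta_{\alpha,\chi}(\sigma,\xi_{\alpha})$ to the common value $c=\omega(\psi,\phi,\alpha)\hat\chi(0)\pi_{\delta_{\alpha},\xi_{\alpha},0}(1)(y)$, which is the asserted asymptotic.

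The hard part is the construction of the admissible sandwich functions $g_{\epsilon}^{\pm}$ satisfying \emph{simultaneously} $C^{\infty}$ smoothness, membership in $L^{1}(\mathbb{R})$, compact support of $\hat g_{\epsilon}^{\pm}$, the pointwise bounds with respect to $\chi$, and convergence of integrals. The Beurling--Selberg extremal functions for the characteristic function of an interval supply exactly this input: they are entire of exponential type (hence $C^{\infty}$ with Fourier transform supported in a compact interval whose size grows as $\epsilon\downarrow 0$), they lie in $L^{1}(\mathbb{R})$, they majorize/minorize $\chi$ pointwise, and their integrals differ from $\int\chi$ by $O(\epsilon)$. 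With this input in hand, everything else reduces to a direct invocation of Theorem~\ref{Mega Prop}.
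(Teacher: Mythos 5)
Your proposal is correct, and it is worth noting what it adds: the paper's own justification of Proposition \ref{Ruelle Asymptotics} is a one-line appeal to the proof of Proposition 10 in \cite{Sh}, whereas you assemble the result internally from Theorem \ref{Mega Prop}. The mechanism you use is the same one that underlies both the cited argument and the paper's proof of Theorem \ref{Mega thm first statement}: under Condition B the group $S_{\alpha,0}$ is trivial, so for \emph{every} admissible $g$ the singular part at $\delta_{\alpha}$ receives a contribution only from $\tau=0$, and since $P(\delta_{\alpha},\xi_{\alpha},0)=0$ the formula $(\ref{c^{v}_{j}})$ gives $(\sigma-\delta_{\alpha})^{1/2}\zeta_{\alpha,g}(\sigma,\xi_{\alpha})\to \omega(\psi,\phi,\alpha)\hat{g}(0)\pi_{\delta_{\alpha},\xi_{\alpha},0}(1)(y)$ independently of the size of $\mathrm{supp}(\hat{g})$; a two-sided sandwich with $\hat{g}^{\pm}_{\epsilon}(0)\to\widehat{\chi}(0)$ then squeezes out the exact asymptotics, which is precisely what the one-sided comparisons in Theorem \ref{Mega thm first statement} cannot do. The Beurling--Selberg majorant and minorant are a legitimate choice of $g^{\pm}_{\epsilon}$ (real, entire of exponential type, in $L^{1}(\mathbb{R})$, with compactly supported Fourier transform and $\int g^{\pm}_{\epsilon}=\widehat{\chi}(0)\pm O(\epsilon)$), and the term-by-term comparison is valid because for real $\sigma$ and real $\xi_{\alpha}$ every weight $e^{\sigma\phi^{n}(x)+\xi_{\alpha}(\psi^{n}(x)-\alpha\phi^{n}(x))}$ is positive, while convergence of $\zeta_{\alpha,g^{\pm}_{\epsilon}}$ for $\sigma>\delta_{\alpha}$ is Proposition \ref{Analyticity and convergence of functions}. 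Two small points should be made explicit: first, Theorem \ref{Mega Prop} is stated under Condition A, so you should record that Condition B together with $0\in\mathrm{int}(\mathcal{I}_{\alpha})$ forces Condition A (if $\psi-\delta\phi$ were cohomologous to zero then $\int\psi\,d\mu=\delta\int\phi\,d\mu$ for every invariant $\mu$, so $\mathcal{I}_{\alpha}=(\delta-\alpha)\left\{\int\phi\,d\mu\right\}$, which never contains $0$ in its interior); second, the minorant is admissible only once $\epsilon$ is small enough that $\hat{g}^{-}_{\epsilon}(0)>0$, and the limit $c$ is strictly positive because $\pi_{\delta_{\alpha},\xi_{\alpha},0}(1)$ is the strictly positive eigenfunction, as observed in the proof of Theorem \ref{Mega thm first statement}, so the squeeze indeed yields the asserted asymptotic equivalence.
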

\noindent This result follows from the proof of Proposition $10$ in \cite{Sh}. 
Applying Proposition $\ref{Ruelle Asymptotics}$ when $T$, $\mu_{\psi}$ and $\alpha$ satisfy Condition $B$ and $0\in\mathrm{int}(\mathcal{I}_{\alpha})$ we can obtain the following bounds for Theorem \ref{Mega thm first statement}
$$ \limsup_{\sigma\downarrow\delta_{\alpha}}\zeta_{\alpha}^{\mu_{\psi}}(\sigma)(\sigma-\delta_{\alpha})^{1/2}\leq \inf_{y\in \Lambda}\omega(\psi,\phi,\alpha)e^{|K_{\phi}(\delta_{\alpha}-\xi_{\alpha}\alpha)|}C^{|\xi_{\alpha}|}\widehat{\chi_{1}}(0)\pi_{\delta_{\alpha},\xi_{\alpha},0}(1)(y),$$ and when $b-a$ is sufficiently large $$ \sup_{y\in\Lambda}\omega(\psi,\phi,\alpha)e^{-|K_{\phi}(\delta_{\alpha}-\xi_{\alpha}\alpha)|}D^{-|\xi_{\alpha}|}\widehat{\chi_{2}}(0)\pi_{\delta_{\alpha},\xi_{\alpha},0}(1)(y)\leq \liminf_{\sigma\downarrow\delta_{\alpha}}\zeta_{\alpha}^{\mu_{\psi}}(\sigma)(\sigma-\delta_{\alpha})^{1/2}.$$ Both of which clearly have no dependence on any underlying $g$.

\section{Proof of Theorem \ref{Baby cohomologous to a constant thm}}

To prove Theorem \ref{Baby cohomologous to a constant thm} we do not require the technical results used in the proof of Theorem \ref{Mega thm first statement} and instead adopt a more direct approach. 

\begin{proof}[Proof of Theorem \ref{Baby cohomologous to a constant thm}]
If $\psi-\alpha\phi$ is cohomologous to zero then $P(\alpha\phi)=0.$ By (\ref{Bowen's equation}) this can only happen when $\alpha=\delta$. By a simple argument if $\alpha'\neq \delta$ and $\psi-\delta\phi$ is cohomologous to zero, then $0\notin \mathcal{I}_{\alpha'}$ and $\zeta_{\alpha'}$ is entire by Theorem \ref{Entire thm}.

It is easily to shown that when $\psi-\delta\phi$ is cohomologous to zero then $$\lim_{n\to\infty} \frac{\log \mu (I^{i}_{n}(x))}{\log |I^{i}_{n}(x)|}=\delta,$$ for all $x\in\Lambda$. Here $I^{i}_{n}(x)$ denotes the $n$-th level basic set containing $x$. Combining this with the proof of Theorem $4.3$ in \cite{JZF} we obtain $\mathrm{dim}_{\mathrm{loc}}\mu(x)=\delta$ for all $x\in \Lambda$.

Finally, a simple manipulation yields constants $K_{1},K_{2}>0$ such that $$\sum_{n=1}^{\infty} \sum_{\stackrel{i}{a|I^{i}_{n}|^{\delta}\leq \mu_{\psi}(I^{i}_{n})\leq b|I^{i}_{n}|^{\delta}}} |I^{i}_{n}|^{\sigma}\geq \sum_{n=1}^{\infty} \sum_{\stackrel{i}{aK_{1}e^{\psi^{n}(x_{n_{i}})}\leq e^{\psi^{n}(x_{n_{i}})}\leq  bK_{2}e^{\psi^{n}(x_{n_{i}})}}}|I^{i}_{n}|^{\sigma},$$where $x_{n_{i}}$ is the element of $I^{i}_{n}$ such that $T^{n}(x_{n_{i}})=x_{n_{i}}$. If $a<1/K_{1}$ and $1/K_{2}<b,$ then $$\sum_{n=1}^{\infty} \sum_{\stackrel{i}{a|I^{i}_{n}|^{\delta}\leq \mu_{\psi}(I^{i}_{n})\leq b|I^{i}_{n}|^{\delta}}} |I^{i}_{n}|^{\sigma}\geq \sum_{n=1}^{\infty} \sum_{i}|I^{i}_{n}|^{\sigma}.$$ The reverse inequality is trivial. The abscissa of convergence of $\sum_{n=1}^{\infty} \sum_{i}|I^{i}_{n}|^{s}$ is the unique value of $\sigma$ for which $$\limsup_{n\to \infty} \Big| \sum_{i}|I^{i}_{n}|^{\sigma}\Big|^{\frac{1}{n}}=1.$$ However, this limit is equal to $e^{P(\sigma\phi)},$ our result follows from $(\ref{Bowen's equation})$.

\end{proof}

In the case where $\psi-\alpha\phi$ is not cohomologous to zero we could give conditions where $\zeta_{\alpha}^{\mu_{\psi}}(\sigma)$ grew like $c/(\sigma-\delta_{\alpha})^{1/2},$ for some critical value $\delta_{\alpha}$ and $c\in\mathbb{R}$. In the case where $\psi-\delta\phi$ is cohomologous to zero it it is natural to ask how does $\zeta_{\delta}^{\mu_{\psi}}(\sigma)$ behave as $\sigma$ approaches $\delta$. We shall see in the following example that as $\sigma$ approaches $\delta$, $\zeta_{\delta}^{\mu_{\psi}}(\sigma)$ does not necessarily behave like $c/(\sigma-\delta_{\alpha})^{1/2}$.

\begin{example}
Let $\Lambda$ be the middle third cantor set and $\mu_{\psi}$ be the pushforward of the $(\frac{1}{2},\frac{1}{2})$ Bernoulli measure. In this case $\psi=-\log 2$ and $\psi-\frac{\log 2}{\log 3}\phi$ is cohomologous to zero. By Theorem $\ref{Baby cohomologous to a constant thm},$ for $a$ sufficiently small and $b$ sufficiently large $$\zeta_{\frac{\log 2}{\log 3}}(s)=\frac{2/3^{s}}{1-2/3^{s}}.$$ The abscissa of convergence is $\log 2/\log 3$. As $\sigma$ approaches $\log 2/\log 3,$ $\zeta_{\frac{\log 2}{\log 3}}(\sigma)$ does not grow like $\frac{c}{(\sigma-\log 2/\log 3)^{1/2}}$ for some $c\in\mathbb{R}$.
\end{example}
In the case where $\psi-\delta\phi$ is cohomologous to zero we do not have a general result describing how $\zeta_{\delta}^{\mu_{\psi}}(\sigma)$ behaves as $\sigma$ approaches the abscissa of convergence. The important detail is that the singularities can behave differently depending on whether $\psi-\alpha\phi$ is cohomologous to zero or not.

\section{Final discussion}

To conclude we discuss the case where $0$ is an endpoint of $\mathcal{I}_{\alpha}$. In this section we assume that $T$ and $\mu_{\psi}$ satisfy Condition A. Recall the definition of our multifractal zeta function $$\zeta_{\alpha}^{\mu_{\psi}}(s)=\sum_{n=1}^{\infty} \sum_{\stackrel{i}{a|I^{i}_{n}|^{\alpha}\leq \mu_{\psi}(I^{i}_{n})\leq b|I^{i}_{n}|^{\alpha}}} |I^{i}_{n}|^{s},$$ where $a$ and $b$ are two positive constants. When $0\notin \mathcal{I}_{\alpha},$ $\zeta_{\alpha}^{\mu_{\psi}}(s)$ is entire for all values of $a$ and $b$. Moreover, in Theorem \ref{Mega thm first statement} the only condition we have on $a$ and $b$ is that $b-a$ be sufficiently large. We now show that in the case where $0$ is an endpoint of $\mathcal{I}_{\alpha}$ we have a greater dependence on $a$ and $b$.

Assume $\alpha$ is such that $\mathcal{I}_{\alpha}=[0,\beta]$ for some $\beta>0$. By a similar argument to that used in the proof of Proposition \ref{New zeta derivation} we can show that $$\zeta_{\alpha}^{\mu_{\psi}}(\sigma)\leq e^{K_{\phi}\sigma}\sum_{n=1}^{\infty}\sum_{x:T^{n}(x)=x}e^{\sigma\phi^{n}(x)}\chi_{[c,d]}(\psi^{n}(x)-\alpha\phi^{n}(x)),$$ where $\chi_{[c,d]}$ is the the indicator function on the interval $[c,d].$ The values $c$ and $d$ depend on $a$ and $b$ respectively. Taking $b$ sufficiently negative we can assume that $d<0$. If $\zeta_{\alpha}^{\mu_{\psi}}(\sigma)$ is non-zero then there exists $x$ such that $T^{n}(x)=x$ and $\psi^{n}(x)-\alpha\phi^{n}(x)<0.$ If $\mu_{x,n}$ is the $T$-invariant probability measure determined by this periodic orbit then $\int \psi-\alpha\phi\, d\mu_{x,n}<0,$ which contradicts our assumption that $\mathcal{I}_{\alpha}=[0,\beta].$ Therefore if $b$ is sufficiently negative $\zeta_{\alpha}^{\mu_{\psi}}(s)\equiv 0$. Similarly in the case where $\mathcal{I}_{\alpha}=[-\beta,0]$ we can show that if $a$ is sufficiently positive then $\zeta_{\alpha}^{\mu_{\psi}}(s)\equiv0.$ 

The following example makes clear that for certain values of $a$ and $b,$ $\zeta_{\alpha}^{\mu_{\psi}}$ can still diverge.

\begin{example}
Let $y$ be such that $T(y)=y$. Suppose $T$, $\mu_{\psi}$ and $\alpha$ are such that $\inf_{x\in\Lambda}\psi(x)-\alpha\phi(x)= \psi(y)-\alpha\phi(y)=0.$ By considering the $T$-invariant probability measure given by the fixed point $y,$ it is clear that $0\in \mathcal{I}_{\alpha}$. Since $\psi-\alpha\phi\geq 0,$ we can deduce that $0$ is an endpoint of $\mathcal{I}_{\alpha}$. We take $a$ and $b$ such that the following holds $$\zeta_{\alpha}^{\mu_{\psi}}(\sigma)\geq e^{-K_{\phi}\sigma}\sum_{n=1}^{\infty}\sum_{x:T^{n}(x)=x}e^{\sigma\phi^{n}(x)}\chi_{[c,d]}(\psi^{n}(x)-\alpha\phi^{n}(x)),$$ for $c<0$ and $d>0$. Clearly $\psi^{n}(y)-\alpha\phi^{n}(y)\in[c,d]$ for all $n\geq 1$ and therefore $\zeta_{\alpha}^{\mu_{\psi}}(\sigma)$ will diverge for some $\sigma\geq 0$.
\end{example} 

By the above discussion it is clear that when $0$ is an endpoint of $\mathcal{I}_{\alpha}$ the question of whether $\zeta_{\alpha}^{\mu_{\psi}}$ diverges or converges has a greater dependence on our values $a$ and $b$. We can not hope to obtain results of the same generality as that given in Theorems \ref{Entire thm} and \ref{Mega thm first statement}. Therefore with our current techniques we do not expect to be able to prove a general result in the case where $0$ is an endpoint of $\mathcal{I}_{\alpha}.$

\subsection*{Acknowledgments}
The author would like to thank Richard Sharp for his support and encouragement. This research was funded by the EPSRC grant number EP/P505631/1.


\begin{thebibliography}{100}

\bibitem{ELMR} Ellis K, Lapidus M L, Mackenzie M and Rock J 2010 Partition zeta functions, multifractal spectra, and tapestries of complex dimensions \textit{Preprint}
\bibitem{Fa1} Falconer K 1990 \textit{Fractal Geometry: Mathematical Foundation and Applications} (Chichester: John Wiley)
\bibitem{Fa2} Falconer K 1997 \textit{Techniques in Fractal Geometry} (Chichester: John Wiley)
\bibitem{JZF} Jinrong L, Zuguo Y and Fuyao R 2000 Measures and their dimension spectrums for cookie-cutter sets in $\mathbb{R}^{d}$ \textit{Acta Math. Appl. Sinica} {\bf 16} 9-21 
\bibitem{KS} Katsuda A and Sunada T 1990 Closed orbits in homology classes \textit{Inst. Hautes \'Etudes Sci. Publ. Math.} \textbf{71} 5-32
\bibitem{LR} Lapidus M L and Rock J 2009 Toward zeta functions and complex dimensions of multifractals \textit{Complex Var. Elliptic Equ.} \textbf{54} 545-559 
\bibitem{LF} Lapidus M L and van Frankenhuysen M 2000 \textit{Fractal Geometry and Number Theory: Complex Dimensions of Fractal Strings and zeros of Zeta Functions} (Boston: Birkh\"auser)
\bibitem{LLR} Lapidus M L, L\'evy-Vehel J and Rock J 2009 Fractal strings and multifractal zeta functions \textit{Lett. Math. Phys.} \textbf{88} 101-129
\bibitem{LM} L\'evy-Vehel J and Mendivil F 2010 Multifractal and higher dimensional zeta functions \textit{Nonlinearity} \textbf{24} 259-276
\bibitem{Li} Livsic A 1972 Cohomology properties of dynamical systems \textit{Math USSR Izv} \textbf{6} 1278-1301
\bibitem{PP} Parry W and Pollicott M 1990 Zeta functions and the periodic orbit structure of hyperbolic dynamics \textit{Asterisque} \textbf{187-188} 1-268
\bibitem{Pe} Pesin Y 1997 \textit{Dimension theory in dynamical systems: Contemporary views and applications} (Chicago Lectures in Mathematics) (Chicago: University of Chicago Press)
\bibitem{Sh} Sharp R 1992 Prime Orbit Theorems with Multi-Dimensional Constraints for Axiom A Flows \textit{Monat. Math} \textbf{114} 261-304


\end{thebibliography}
\end{document}